\definecolor{chianti}{rgb}{0.6,0,0}
\definecolor{meretale}{rgb}{0,0,.6}
\definecolor{leaf}{rgb}{0,.35,0}
  \tikzset{mylabel/.style  args={at #1 #2  with #3}{
    postaction={decorate,
    decoration={
      markings,
      mark= at position #1
      with  \node [#2] {#3};
 } } } }
\DeclareMathOperator{\depth}{depth}
\DeclareMathOperator{\height}{ht}
\DeclareMathOperator{\Tr}{Tr}
\DeclareMathOperator{\ini}{in}
\DeclareMathOperator{\Ass}{Ass}
\newcommand{\fraka}{{\mathfrak{a}}}
\newcommand{\frakp}{{\mathfrak{p}}}
\newcommand{\calR}{\mathcal{R}}
\newcommand{\gr}{\mathfrak{gr}}
\def\NN{\mathbb{N}}
\newtheorem*{thm*}{Theorem}
\newtheorem{thm}{Theorem}[section]
\newtheorem{lem}[thm]{Lemma}
\newtheorem{cor}[thm]{Corollary}
\newtheorem{prop}[thm]{Proposition}
\theoremstyle{definition}
\newtheorem{defn}[thm]{Definition}
\declaretheorem[
  style=definition,
  title=Example,
  refname={example,examples},
  Refname={Example,Examples},
  sharenumber=thm,
]{exa}
\declaretheorem[
  style=definition,
  title=Remark,
  sharenumber=thm,
]{rmk}
\declaretheorem[
  style=definition,
  title=Setup,
  sharenumber=thm,
]{setup}
\begin{document}
\title[Symbolic powers of the generic linkage of maximal minors]{Symbolic powers of the generic linkage of \\ maximal minors}

\author{Vaibhav Pandey}
\address{Department of Mathematics, Purdue University, 150 N University St., West Lafayette, IN~47907, USA}
\email{pandey94@purdue.edu}
\author{Matteo Varbaro}
\address{Dipartimento di Matematica, Universita di Genova Via Dodecaneso, 35 16146, Genova, Italy}
\email{matteo.varbaro@unige.it}

\thanks{V.P. is partially supported by the AMS--Simons Travel Grant. M.V. is supported by PRIN~2020355B8Y ``Squarefree Gr\"obner degenerations, special varieties and related topics,'' by MIUR Excellence Department Project awarded to the Dept.~of Mathematics, Univ.~of Genova, CUP D33C23001110001, by INdAM-GNSAGA and by NSF grant DMS-1928930 and by Alfred P. Sloan Foundation grant G-2021-16778.}

\subjclass[2010]{Primary 13C40, 13A35, 13A30, 14M06; Secondary 14M10}
\keywords{symbolic powers, linkage, Gr\"obner degeneration, Rees algebra, $F$-rational, $F$-regular}

\begin{abstract}
Let $I$ be the ideal generated by the maximal minors of a matrix of indeterminates over a field and let $J$ denote the generic link, i.e., the most general link, of $I$. The generators of the ideal $J$ are not known. We provide an explicit description of the lead terms of the generators of $J$ using Gr\"obner degeneration. Indeed, we construct a degeneration which preserves the entire graded Betti table of $J$ on passing to the initial ideal. We leverage this construction to establish the equality of the symbolic and ordinary powers of $J$. Our analysis of the initial ideal readily yields the Gorenstein property of the associated graded ring of $J$, and, in positive characteristic, the $F$-rationality of the Rees algebra of $J$. Using the technique of $F$-split filtrations, we further obtain the $F$-regularity of the blowup algebras of $J$.     
\end{abstract}

\maketitle

\section{Introduction}

Two proper homogeneous ideals $I$ and $J$ in a polynomial ring $R$ over a field are said to be \textit{linked} if there exists an ideal $\fraka$ generated by a regular sequence such that 
 \[J = \mathfrak{a}:I \qquad \text{and} \qquad I = \mathfrak{a}:J.\]
It is easily seen that all the associated primes of $I$ and $J$ have the same height and if none of these associated primes are shared between $I$ and $J$, the definition says exactly that the scheme-theoretic union $V(I) \cup V(J)$ of the vanishing loci is $V(\fraka)$, so $V(J)$ `links' $V(I)$ to a complete intersection. Classically, linkage was used as a tool to classify curves in $\mathbb{P}^3$ by linking a given curve to a simpler one. For instance, the twisted cubic is linked to a line in $\mathbb{P}^3$. 

By the seminal work of Peskine and Szpiro \cite{PS74}, and the extensive subsequent contributions of Huneke and Ulrich \cite{HunekeSCM, Huneke-Ulrich85, Huneke-Ulrich87, HunekeUlrich-Duke, Huneke-Ulrich88} in developing the subject from an algebraic perspective, the modern study of linkage has evolved as a duality theory between unmixed ideals of the same height. For example, the canonical module of $R/I$ is $J/\fraka$ and, by symmetry, that of $R/J$ is $I/\fraka$. Also, $R/I$ is a Cohen--Macaulay ring if and only if $R/J$ is.

Evidently, the link depends on the choice of the regular sequence. The \textit{generic link} of $I$ is the link obtained on choosing the regular sequence to be a generic combination of the generators of $I$ (see Definition \ref{defn:genericlink}). At least when $R/I$ is a Cohen--Macaulay ring, the generic link acts as the prototypical link in the sense that it can be specialized to any other link of $I$. Generic links are of fundamental importance for the structural results of ideals linked to complete intersections (\textit{licci} ideals) \cite[Corollary 4.9]{Huneke-Ulrich87}, divisor class groups of rigid Cohen--Macaulay normal domains \cite[Theorem 4.2]{Huneke-Ulrich85}, Castelnuovo--Mumford regularity of projective
schemes having nice singularities \cite[Theorem 4,4]{Chardin-Ulrich}, etc.  

Let $X$ be a matrix of indeterminates over a field $K$ and $I$ be the ideal of $R=K[X]$ generated by the maximal minors of $X$. Let $J$ be the generic link of $I$ in a polynomial extension $S$ of $R$. The generic determinantal ring $R/I$ is an \textit{Algebra with a Straightening Law} (ASL) \cite{DEP}. This allows for the use of the \textit{standard monomial theory} to get the equality of the symbolic and ordinary powers of the ideal $I$ of maximal minors. It is not at all clear whether the generic link $S/J$ has an ASL structure. In addition, $J$ is not generated in a single degree. One of the main results of this paper is

\begin{thm*} [Corollary \ref{cor:OrdinaryEqualsSymbolic}]
The symbolic and ordinary powers of the generic link $J$ are equal.
\end{thm*}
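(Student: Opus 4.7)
The plan is to deduce $J^{(n)} = J^n$ from the explicit description of the squarefree initial ideal of $J$ established earlier in the paper, by transferring the equality across the Gr\"obner degeneration. Set $M \colonequals \ini_<(J)$, a squarefree monomial ideal whose combinatorial structure is transparent from the leading terms of the reduced Gr\"obner basis.

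The first step is to verify directly that $M^{(n)} = M^n$ for every $n \geq 1$. Writing $M = P_1 \cap \cdots \cap P_s$ as the intersection of its monomial minimal primes gives $M^{(n)} = P_1^n \cap \cdots \cap P_s^n$, and the task reduces to showing that this intersection equals $M^n$. Given the concrete shape of the generators of $M$, I expect this to follow from a direct K\"onig-type combinatorial check on the associated hypergraph, or from an ASL-type structure inherited from the generic determinantal context.

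The second step is the Gr\"obner degeneration transfer. One always has
\[
M^n \;=\; \ini_<(J)^n \;\subseteq\; \ini_<(J^n) \;\subseteq\; \ini_<(J^{(n)}),
\]
together with $J^n \subseteq J^{(n)}$. If one can establish the reverse inclusion $\ini_<(J^{(n)}) \subseteq M^{(n)}$, then combining with Step 1 forces every containment above to be an equality; in particular $\ini_<(J^n) = \ini_<(J^{(n)})$, and two nested homogeneous ideals with the same initial ideal must coincide, so $J^n = J^{(n)}$ follows.

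The main obstacle is the containment $\ini_<(J^{(n)}) \subseteq M^{(n)}$. My plan is to prove it by lifting the primary decomposition: each monomial minimal prime $P_i$ of $M$ should lift to a minimal prime $\mathfrak p_i$ of $J$ matching it under the degeneration, giving $J^{(n)} \subseteq \bigcap_i \mathfrak p_i^{(n)}$, where one uses that $J$ is generically a complete intersection---a standard feature of links of Cohen--Macaulay ideals. It then suffices to show $\ini_<(\mathfrak p_i^{(n)}) \subseteq P_i^n$, which should follow from a localization argument at each $\mathfrak p_i$ together with the explicit form of the Gr\"obner basis. A parallel (and possibly cleaner) route, suggested by the abstract, is to use the Gorenstein property of $\operatorname{gr}_J(R)$: Cohen--Macaulayness of $\operatorname{gr}_J(R)$ forces each $R/J^n$ to be Cohen--Macaulay and hence unmixed, which combined with the generic complete-intersection property collapses $J^{(n)}$ to $J^n$ directly.
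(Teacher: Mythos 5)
Your overall two-step architecture (establish $\ini(J)^{(\ell)}=\ini(J)^\ell$ for the squarefree initial ideal, then transfer via the containment chain) is exactly the paper's strategy, and your concluding logic ``nested homogeneous ideals with the same initial ideal coincide'' is correct. But Step~1 is the entire content of Section~4 and you have not carried it out. The equality $\ini(J)^{(\ell)}=\ini(J)^\ell$ is Theorem~\ref{thm:main}, and its proof is a delicate combinatorial argument using the Monta\~no--N\'u\~nez-Betancourt criterion (Theorem~\ref{t:useful}): one must show $\nu\in(\ini(J)^{r+1})^{[2]}:\ini(J)^{2r+1}$ for every $r$, which the paper does via the straightening law $\beta_A\beta_B=\beta_{A\wedge B}\beta_{A\vee B}$ (Lemma~\ref{lem:straighten}) and a three-way case analysis (cases $a\geq 2$, $a=1$, $a=0$ with a subtle subcase~(ii) requiring the auxiliary set $C$). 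The K\"onig/MFMC reformulation you gesture at is equivalent (the paper says so) but is never verified directly. Moreover the naive combinatorics fail: the subideal $N$ generated by the nontrivial $\beta_A$ alone usually does not have $N^{(2)}=N^2$ (Example~\ref{exU}), so the interaction with the generators $Y_{j,j}\alpha_j$ and the common factor $\prod Y_{j,j}$ is essential, and a hypergraph check on the ``determinantal-looking'' part of $\ini(J)$ would miss this.

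Your Step~2 also has a gap at the containment $\ini(J^{(n)})\subseteq\ini(J)^{(n)}$. The proposed lifting of the primary decomposition is not sound: there is no canonical matching of monomial minimal primes of $\ini(J)$ with minimal primes of $J$, and even given such a matching it is unclear that $\ini(\mathfrak p_i^{(n)})\subseteq P_i^n$. This containment is precisely \cite[Proposition~5.1]{Sullivant}, valid whenever the initial ideal is squarefree, and the paper's Proposition~\ref{prop:crucialmethod} simply cites it; your argument needs this result, not a hand-built lift. The parallel route via Cohen--Macaulayness of $\gr(J)$ does not rescue the argument either: CM of $\gr(J)$ does not by itself force $S/J^n$ to be unmixed for every $n$, and the sharper analytic-spread argument (the paper's Proposition~\ref{prop:general}) requires $J$ to be an almost complete intersection that is a complete intersection in codimension $\height J+1$. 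For $1<m<n$, the generic link of maximal minors has $\binom{n-1}{m-1}$ extra minimal generators beyond the height-many regular sequence (Remark~\ref{rem:in(J)}), so it is far from an almost complete intersection and that route is closed.
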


The main difficulty in proving the above theorem is that the generators of the link $J$ are not known. In principle, a mapping cone constructed by Ferrand resolves the link starting from the Eagon--Northcott resolution of $I$; however it is very complicated to obtain the explicit generators of $J$. The limitation of the homological viewpoint stems from the fact that the comparison maps in the mapping cone are difficult to determine explicitly (see \cite{Hema} for a DG-algebra approach). Since the generators of links are rarely known in general, the overarching \textbf{theme of this paper} is to demonstrate that linkage theory provides a fertile ground for the meaningful application of Gr\"obner degeneration techniques.

In Section \ref{Section2}, we discuss some techniques to determine the initial ideal of the generic link $J$. We show that for a carefully chosen term order in $S$ (see Example \ref{example:term-order}), the reduced Gr\"obner basis of $J$ is a minimal generating set of $J$ and the initial ideal of $J$ is squarefree (Corollary \ref{cor:GrobnerBasis}). In fact, using \cite{CoVa} and the techniques to study the generic link discussed in \cite[\S 3]{Pandey}, we show that there is `no loss of information' in passing to the initial ideal (Lemma \ref{lem:main}):
\[\beta_{i,j}(S/J)=\beta_{i,j}(S/\ini(J)) \quad \text{for all}\quad i,j\geq 0.\]
This toolkit works in greater generality and may apply equally well to study the generic links of certain other classes of ideals.

In Section \ref{Section3}, we give a precise description of the initial ideal $\ini(J)$ of the generic link (Theorem \ref{thm:in(J)}). It turns out that the generators of $\ini(J)$ arise from intriguing combinatorial patterns (Remark \ref{rmk:lead terms}). With this explicit description of $\ini(J)$ at our disposal, we prove Corollary \ref{cor:OrdinaryEqualsSymbolic} in Section \ref{Section4} by establishing:

\begin{thm*}[Theorem \ref{thm:main}]
For a suitable term order, the symbolic and ordinary powers of the initial ideal $\ini(J)$ of the generic link are equal.    
\end{thm*}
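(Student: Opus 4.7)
The plan is to exploit the squarefree nature of $\ini(J)$ established in Corollary \ref{cor:GrobnerBasis}, together with the explicit combinatorial description of its generators obtained in Theorem \ref{thm:in(J)}. For any squarefree monomial ideal $L\subset S$, the minimal primes are monomial primes generated by the variables corresponding to minimal vertex covers of the associated clutter, and
$$L^{(n)} \;=\; \bigcap_{\frakp \in \Ass(S/L)} \frakp^n.$$
Hence the equality $\ini(J)^{(n)} = \ini(J)^n$ for every $n\geq 1$ is equivalent to $\ini(J)$ being \emph{normally torsion-free}, i.e.\ $\Ass(S/\ini(J)^n) = \Ass(S/\ini(J))$ for all $n$, a purely combinatorial condition on the clutter $\mathcal{C}$ associated to $\ini(J)$.

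My first step would be to read off the minimal vertex covers of $\mathcal{C}$ directly from the generators described in Theorem \ref{thm:in(J)} and Remark \ref{rmk:lead terms}. Then I would attempt to verify that $\mathcal{C}$ satisfies the \emph{max-flow min-cut} property (equivalently, the Mengerian or König-type condition on all its minors), which by the classical equivalences for normally torsion-free squarefree monomial ideals yields the desired equality. A more hands-on alternative is a direct route: given a monomial $m \in \bigcap_{\frakp} \frakp^n$, produce an explicit factorisation $m = g_1 \cdots g_r$ with each $g_i$ a generator of $\ini(J)$ and $r\geq n$, via a greedy or bipartite-matching argument on the row and column indices of the matrix $X$.

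The main obstacle is that the generators of $\ini(J)$ have non-uniform degrees and exhibit an intricate branched combinatorial pattern (interleaving the supports of the maximal minors of $X$ with the generic coefficients used to form the link), so a purely greedy factorisation can get stuck. I would therefore proceed by induction on the size of $X$: after fixing a distinguished variable, either one removes a row or column and reduces to a smaller generic-link instance handled inductively, or one analyses a colon ideal or localisation that retains the same combinatorial shape. The squarefreeness of $\ini(J)$ and the explicit description of its generators should keep the induction tractable, and the persistence of associated primes across the induction yields the normal torsion-freeness of $\ini(J)$, and hence the theorem.
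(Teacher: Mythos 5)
Your framing is correct (squarefreeness reduces the question to normal torsion-freeness of the clutter $\mathcal{C}$ attached to $\ini(J)$, equivalently to the max-flow-min-cut property), and this is exactly the equivalent formulation the paper records in the introduction via \cite[Theorem 1.4]{HHTZ08}. But from that point on, your proposal is a research plan rather than a proof, and the plan you sketch is not the one that works here. You contemplate a greedy/bipartite-matching factorisation or an induction on the size of $X$; neither of these is used (and the induction would be delicate, since passing to a submatrix changes the ambient $Y$-variables and the shape of $\ini(J)$ in a way that is not obviously a ``smaller instance of the same problem''). The paper instead invokes a concrete criterion of Monta\~no and N\'u\~nez-Betancourt \cite[Corollary 4.10]{Jonathon-Luis}: for a squarefree monomial ideal $W$ one has $W^{(\ell)}=W^\ell$ for all $\ell$ if and only if the product $\nu$ of all the variables lies in $(W^{r+1})^{[2]}:W^{2r+1}$ for every $r$. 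This turns the problem into an explicit divisibility question about monomials.

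The hard content you are missing is the combinatorial machinery needed to verify that criterion. The paper first proves a ``straightening law'' $\beta_A\beta_B=\beta_{A\wedge B}\beta_{A\vee B}$ on the nontrivial generators $\beta_A$ (Lemma \ref{lem:straighten}), which lets one assume the $A$'s form a chain (Corollary \ref{c:Upowers}); it then derives the exponent formula of Remark \ref{rem:key} and the key divisibility $\bigl(\prod_k\beta_{A_{2k}}\bigr)^2\mid\prod_h\beta_{A_h}$ (Lemma \ref{lem:mostcases}). The proof of Theorem \ref{thm:main} then splits a product of $2r+1$ generators of $\ini(J)$ into $a$ type-(1) generators $Y_{i,i}\alpha_i$ and $b$ type-(2) generators $\mu\beta_A$, treating the cases $a\geq 2$, $a=1$, $a=0$ separately; the case $a=0$ is genuinely delicate and requires constructing an auxiliary index set $C$ and an auxiliary $\beta_C$ when a certain lattice-chain obstruction (situation (ii)) occurs. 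Example \ref{exU} shows this obstruction is real: the subideal $N=(\beta_A)$ of nontrivial generators does \emph{not} itself have equal symbolic and ordinary powers for $m>2$, $n>m+1$, so no argument that ignores the interaction between the $\alpha_i$'s and the $\beta_A$'s (or that tries to treat $N$ alone inductively) can succeed. Your proposal never engages with this obstruction, which is the crux of the theorem.
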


Quite generally, if the initial ideal is squarefree, then Theorem \ref{thm:main} is stronger than Corollary \ref{cor:OrdinaryEqualsSymbolic}; this has some beautiful consequences of general interest (see Proposition \ref{prop:crucialmethod}). Theorem \ref{thm:main} also has connections with Linear Programming: an equivalent formulation is that the incidence matrix of the clutter whose edges correspond to the minimal generators of $\ini(J)$ satisfies the so called \textit{max-flow-min-cut} property by a theorem of Herzog, Hibi, Trung, and Zheng \cite[Theorem 1.4]{HHTZ08}. The proof of Theorem \ref{thm:main} requires the use of a criterion of Monta\~no, and  N\'u\~nez-Betancourt (see \cite[Corollary 4,10]{Jonathon-Luis}) for the equality of the symbolic and ordinary powers of squarefree monomial ideals. We point out that one of the main complications in the proof arises from the fact that the symbolic and ordinary powers of the ideal of the `nontrivial generators' of $\ini(J)$ are usually \textit{not} equal (see Example \ref{exU}, Corollary \ref{cor:NontrivialGenerators}). This forces us to make a number of careful reductions in order to keep a check on the underlying combinatorial arguments.  

The equality of the symbolic and ordinary powers of an ideal is intimately connected with the singularities of its blowup algebras \cite{Hoc, HunekeIllinois, Huneke-Simis-Vasconcelos}. One of the motivations of this work is the following question that Bernd Ulrich asked us at the special semester in Commutative Algebra in SLMath, Berkeley (Spring 2024): ``Is the Rees algebra $\mathcal{R}(J)$ of the generic link Cohen--Macaulay?'' In Section \ref{Section5}, we study the singularities of the blowup algebras of the generic link. Our analysis of the initial ideal $\ini(J)$ readily yields the Gorenstein property of the associated graded ring $\gr(J)$ of the generic link as well as the $F$-rationality of its Rees algebra $\mathcal{R}(J)$ in positive prime characteristic (and rational singularities in characteristic zero) (see Corollary \ref{cor:Rees}). In particular, $\mathcal{R}(J)$ is a Cohen--Macaulay normal domain, thus answering Ulrich's question in the affirmative. 

The theory of \textit{$F$-split filtrations} \cite[\S 4]{dS-Mn-NB} developed recently by De Stefani, Monta\~no, and  N\'u\~nez-Betancourt provides an efficient technique to study the singularities of blowup algebras in prime characteristic, especially for ideals which are determinantal in nature. Indeed, using this technique, we further obtain the $F$-regularity of $\mathcal{R}(J)$ and $\gr(J)$ in Theorem \ref{thn:symbolicFsplit}, thereby strengthening Corollary \ref{cor:Rees}. Summarizing this discussion:

\begin{thm*}
Over an $F$-finite field of positive characteristic, the Rees algebra $\mathcal{R}(J)$ and the associated graded ring $\gr(J)$ are strongly $F$-regular. In characteristic $0$, they have rational singularities. In any characteristic, $\mathcal{R}(J)$ is a Cohen--Macaulay normal domain and $\gr(J)$ is a Gorenstein normal domain.
\end{thm*}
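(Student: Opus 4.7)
The plan is to first establish the positive characteristic statement; the characteristic-zero assertion then follows via the standard reduction-mod-$p$ correspondence between strongly $F$-regular type and rational singularities. The Cohen--Macaulay normal domain conclusion is a formal consequence in either case. Thus it suffices to prove strong $F$-regularity of $\mathcal{R}(J)$ and $\gr(J)$ when the base field is $F$-finite of positive characteristic.

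The core strategy is to pass to the squarefree initial ideal $\ini(J)$ studied in Sections \ref{Section2}--\ref{Section4}. For a squarefree monomial ideal $I$ satisfying $I^n = I^{(n)}$ for every $n$, the filtration $\{I^n\}$ is an $F$-split filtration in the sense of \cite[\S 4]{dS-Mn-NB}: the standard Frobenius splitting of the polynomial ring $S$ compatibly splits each monomial prime minimal over $I$, and the equality $I^n = I^{(n)}$ lifts this compatibility to every stage of the filtration. By Theorem \ref{thm:main}, this applies to $\ini(J)$, and the main results of loc.~cit.~then yield strong $F$-regularity of the blowup algebras $\mathcal{R}(\ini(J))$ and $\gr(\ini(J))$, with the $F$-rationality already known from the proof of Corollary \ref{cor:Rees} serving as an input.

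The transfer from $\ini(J)$ back to $J$ is standard. The Gröbner degeneration of Section \ref{Section2} (together with the preservation of Betti numbers established in Lemma \ref{lem:main}) realizes $\mathcal{R}(J)$ and $\gr(J)$ as general fibers of flat $K[t]$-families whose special fibers are $\mathcal{R}(\ini(J))$ and $\gr(\ini(J))$; all fibers are Cohen--Macaulay by Corollary \ref{cor:Rees} and its initial-ideal counterpart, and strong $F$-regularity spreads from the special fiber to the general fiber under these hypotheses.

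The main obstacle will be verifying that the filtration $\{\ini(J)^n\}$ is $F$-split in the precise sense of \cite{dS-Mn-NB}, rather than merely $F$-pure. Although Theorem \ref{thm:main} supplies the requisite equality of symbolic and ordinary powers, the intricate combinatorics of the generators of $\ini(J)$ (Remark \ref{rmk:lead terms}) forces one to identify the minimal primes of each power carefully and to check that a single splitting of $S$ splits them all compatibly, a task for which the framework of \emph{loc.~cit.}~was designed and which should carry the bulk of the argument.
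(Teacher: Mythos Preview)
Your approach has a genuine gap at its two central steps.

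First, for the associated graded ring the deformation strategy cannot work even in principle: $\gr(\ini(J))$ has $S/\ini(J)$ as its degree-$0$ component, and $S/\ini(J)$ is a reduced Stanley--Reisner ring with several minimal primes, hence not a domain. Thus $\gr(\ini(J))$ is not a domain and certainly not strongly $F$-regular, so there is nothing to ``spread'' to $\gr(J)$. Relatedly, the $F$-split filtration machinery of \cite{dS-Mn-NB} yields $F$-splitness of the blowup algebras (their Theorem~4.7), not strong $F$-regularity; your sentence ``the main results of loc.~cit.~then yield strong $F$-regularity of the blowup algebras $\mathcal{R}(\ini(J))$ and $\gr(\ini(J))$'' overstates what that framework provides.

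Second, even for the Rees algebra---where $\mathcal{R}(\ini(J))$ \emph{is} strongly $F$-regular, being a normal affine semigroup ring---your transfer step ``strong $F$-regularity spreads from the special fiber to the general fiber'' is exactly the deformation statement that fails by Singh's example \cite{Singh}. The paper flags this obstruction explicitly in the remark immediately preceding the theorem: the Gr\"obner degeneration argument of Proposition~\ref{prop:crucialmethod} gives $F$-rationality (which does deform) but not $F$-regularity.

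The paper avoids both obstacles by working with $J$ directly rather than with $\ini(J)$, and by induction on $m$. One constructs an explicit element $g\in J^{(n-m+1)}=J^{n-m+1}$ whose initial term is squarefree and not divisible by $x_{1,1}$; the trace map $\Tr(F_*(x_{1,1}^{p-2}g^{p-1})\,\cdot\,)$ then gives compatible Frobenius splittings $\Psi$ of $\mathcal{R}(J)$ and $\overline{\Psi}$ of $\gr(J)$ sending $x_{1,1}$ (respectively $\overline{x_{1,1}}$) to $1$, using \cite[Corollary~5.10]{dS-Mn-NB}. Localizing at $x_{1,1}$ reduces to the blowup algebras of the generic link of $(m-1)$-minors of an $(m-1)\times(n-1)$ matrix, strongly $F$-regular by induction; one then concludes via the Hochster--Huneke criterion \cite[Theorem~5.9(a)]{Hochster-Huneke94a}. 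The essential idea you are missing is this localization-plus-splitting mechanism, which sidesteps deformation entirely.
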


With the behavior of the symbolic powers of the generic link of maximal minors well-understood, it is natural to ask what happens in the case of non-maximal minors. While we do not know the answer in general, and calculations with computer algebra systems seem to be quite inaccessible, we remark that the equality of the symbolic and ordinary powers, the Cohen--Macaulay property of the Rees algebra, and the Gorenstein property of the associated graded ring, all continue to hold for the generic link of minors of \textit{any} size of a generic square matrix essentially due to the fact that the generic link is an \textit{almost complete intersection} (Corollary \ref{cor:non-maximal}). 

\section{Gr\"obner degeneration of links}\label{Section2}

Throughout the paper, $R$ will denote a polynomial ring over a field $K$, equipped with the standard grading.
\begin{defn}\label{defn:Linkage}
    Let $I$ and $J$ be proper ideals of $R$. We say that $I$ and $J$ are \emph{linked} (or $R/I$ and $R/J$ are linked) if there exists an ideal $\mathfrak{a}\subset R$ generated by a regular sequence such that \[J = \mathfrak{a}:I \qquad \text{and} \qquad I = \mathfrak{a}:J,\] and use the notation $I \sim _{\mathfrak{a}} J$. Furthermore, we say that the link is \emph{geometric} if we have $\height(I+J) \geq \height(I)+1$. 
\end{defn}

It is clear that the ideal $\mathfrak{a}$ is contained in $I$ and $J$. Note that the associated primes of $I$ and $J$ have the same height, that is, the ideals $I$ and $J$ are \textit{unmixed}. Also the heights of the ideals $I$, $J$, and $\mathfrak{a}$ are equal. Moreover, when the link is geometric, it follows that the ideal $\mathfrak{a}$ is the intersection of $I$ and $J$. We now define the generic link, our main object of study.

\begin{defn}\label{defn:genericlink}
    Let $I\subset R$ be an unmixed ideal of height $g>0$. Let $f_1,\ldots,f_r$ be a generating set of $I$. Let $Y$ be a $g \times
   r$ matrix of indeterminates, and let $\mathfrak{a}\subset R[Y]$ be the ideal generated by the entries of the matrix $Y[f_1\dots f_
   r]^T$ (where $[\quad]^T$ denotes the transpose of the matrix). The ideal \[J = \mathfrak{a}:IR[Y]\] is the \emph{generic link} of $I$.
\end{defn}

\begin{rmk}
The generic link is essentially independent of the choice of a generating set of the ideal: Any two generic links of an ideal are isomorphic up to adjoining finitely many indeterminates \cite[Proposition 2.4]{Huneke-Ulrich85}. However, to be precise, we will always specify the generating set of $I$ that we are working with.
\end{rmk}

The following example motivates our choice of term order in studying the initial ideal of the generic link:

\begin{exa}\label{example:term-order}
Let $X\colonequals (x_{i,j})$ be a $2\times 4$ matrix of indeterminates over a field $K$, and $I$ be the ideal generated by the $2$-minors of $X$ in $R= K[X]$. Let $Y\colonequals (Y_{i,j})$ be a $3 \times 6$ matrix of indeterminates, $S= R[Y]$, and $J = \mathfrak{a}:IS$ be the generic link constructed considering the 2-minors of $X$ as the generating set of $I$. Consider the reverse lexicographical term order on $S$ given by
\[Y_{3,6}>Y_{3,5}>\cdots Y_{3,1}>Y_{2,6}>\cdots >Y_{1,1}>x_{2,4}>x_{2,3}> \cdots>x_{2,1}>x_{1,4}>\cdots>x_{1,1}.\] 
By Macaulay$2$ \cite{Macaulay2}, the graded Betti numbers of $J$ and $\ini(J)$ are as follows:

\[\centering
\begin{tabular}{llllll} \hline
$\beta(S/J)$ & 0 & 1 & 2 & 3 \\ \hline
0 & 1 & - & - & - \\ \hline
1 & - & - & - & - \\ \hline
2 & - & 3 & - & - \\ \hline
3 & - & - & - & - \\ \hline
4 & - & 3 & 11 & 6 \\ \hline
\end{tabular} \qquad \begin{tabular}{lllllll} \hline
$\beta(S/\ini(J))$ & 0 & 1 & 2 & 3 & 4 \\ \hline
0 & 1 & - & - & - & - \\ \hline
1 & - & - & - & - & - \\ \hline
2 & - & 3 & 3 & 1 & -\\ \hline
3 & - & 3 & 5 & 2 & -\\ \hline
4 & - & 7 & 13 & 7 & 1 \\ \hline
5 & - & - & 1 & 1 & - \\ \hline
\end{tabular}\]

Note that passing to the initial ideal with respect to this term order does \textit{not} preserve homological data well. The initial ideal $\ini(J)$ picks up several new syzygies; in particular, this term order produces an inefficient Gr\"obner basis: $\ini(J)$ picks up $3$ new minimal generators in degree $4$, and $4$ new minimal generators in degree $5$. In addition, even though $\ini(I)$ is squarefree (the minors form a Gr\"obner basis), $\ini(J)$ is \textit{not} squarefree. Indeed, the monomials
\[x_{1,1}^2Y_{1,4}Y_{2,2}Y_{3,1} \quad \text{and} \quad x_{2,1}^2Y_{1,4}Y_{2,2}Y_{3,1}\]
are minimal generators of $\ini(J)$.

Instead, consider the term order $<_1$ on $S$ described as after Lemma \ref{lemma:initialIsLink}. Working with this term order, we get that the corresponding initial ideal $\ini_{<_1}(J)$ is squarefree (see Proposition \ref{p:radical}) and the reduced Gr\"obner basis of $J$ is a minimal set of generators of $J$ (see Corollary \ref{cor:GrobnerBasis}). In fact, we show in Lemma \ref{lem:main} the much stronger property that the graded Betti table of the link is preserved on passing to the initial ideal:
\[\beta_{i,j}(S/J)=\beta_{i,j}(S/\ini_{<_1}(J)) \quad \text{for each}\quad i,j\geq 0.\]
\end{exa}

The following property was defined by the first author \cite[Definition 3.1]{Pandey}:

\begin{defn}\label{defn:PropertyP}
Let $I\subset R$ be an unmixed ideal. Let $\mathbf{P}$ be the following property of the pair $(R,I)$: For a fixed term order $<$ in $R$, there exist elements $\underline{\alpha} \colonequals \alpha_1, \ldots, \alpha_{\height(I)}$ in the ideal $I$ such that each initial term $\ini_<(\alpha_i)$ is squarefree and each pair of initial terms $\ini_<(\alpha_i), \ini_<(\alpha_j)$ is mutually coprime for $i \neq j$.
\end{defn}

\begin{rmk} \label{remark:ForP}
Since each pair of initial terms $\ini_<(\alpha_i), \ini_<(\alpha_j)$ is mutually coprime, the monomials $\ini_<(\alpha_1), \ldots, \ini_<(\alpha_{\height(I)})$ form an $R$-regular sequence. Therefore the polynomials $\underline{\alpha}= \alpha_1, \ldots, \alpha_{\height(I)}$ also form an $R$-regular sequence (see, for example, \cite[Proposition 1.2.12]{BCRV}).     

We caution the reader that, in general, the regular sequence $\underline{\alpha}$ may\textit{ not} be part of a minimal generating set of $I$. For example, take $I=(x_1x_2+x_4x_5, \ x_1x_3)$ in $K[x_1,\ldots ,x_5]$ and the lexicographical term order $x_1>\cdots >x_5$.
\end{rmk}

The aim of this section is to provide a technique (see Lemma \ref{lem:main}, Corollary \ref{cor:GrobnerBasis}) to compute the initial ideal of the generic link for ideals $I\subset R$ such that $(R,I)$ has the property $\mathbf{P}$ and $I$ has a linear resolution. Although in this paper we focus on the ideal of maximal minors---which has property $\mathbf{P}$ (e.g. see \cite[Proposition 5.1]{Pandey})---we remark that our techniques may also apply to other important cases in linkage theory (see \cite[\S3, \S6]{Pandey}) like the generic links of generic height $3$ Gorenstein ideals, the generic residual intersections (a generalization of linkage) of complete intersection ideals, some monomial ideals, etc.  

The following is a particular case of \cite[Theorem 3.13]{Varbaro-Koley} if $I$ is a radical ideal; we think it is useful to underline that in our setting, the assumption that $I$ is radical is not necessary.

\begin{prop}\label{p:radical}
Let $I\subset R$ be an ideal. Suppose that the pair $(R,I)$ has the property $\mathbf{P}$ with respect to a term order $<$. Then $\ini_<(I)$ is a squarefree monomial ideal. In particular, $I$ is radical.
\end{prop}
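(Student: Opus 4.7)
My plan is to reduce to the radical case already handled by \cite[Theorem 3.13]{Varbaro-Koley}, by first showing that $I$ is automatically radical once the pair $(R,I)$ satisfies property $\mathbf{P}$. Set $g \colonequals \height I$ and $\fraka \colonequals (\alpha_1,\ldots,\alpha_g)$, where $\alpha_1,\ldots,\alpha_g$ are the elements provided by property $\mathbf{P}$.

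First I would observe that, since the leading terms $\ini_<(\alpha_i)$ are pairwise coprime, the standard $S$-pair criterion of Buchberger immediately gives that $\{\alpha_1,\ldots,\alpha_g\}$ is a Gr\"obner basis of $\fraka$, and hence
\[\ini_<(\fraka) = (\ini_<(\alpha_1),\ldots,\ini_<(\alpha_g))\]
is squarefree, being generated by squarefree, pairwise coprime monomials. By the classical fact that any ideal whose initial ideal is squarefree is itself radical, $\fraka$ is radical.

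Next I would transfer the radicality from $\fraka$ to $I$ via a localization argument. Let $\frakp$ be a minimal prime of $I$: since $\fraka \subseteq I$ and $\height \frakp = g = \height \fraka$, the prime $\frakp$ must be a minimal prime of $\fraka$. Localizing at $\frakp$, radicality of $\fraka$ yields $\fraka R_\frakp = \frakp R_\frakp$, while unmixedness of $I$ forces $I R_\frakp$ to be $\frakp R_\frakp$-primary, hence $I R_\frakp \subseteq \frakp R_\frakp$. The resulting squeeze $\frakp R_\frakp = \fraka R_\frakp \subseteq I R_\frakp \subseteq \frakp R_\frakp$ gives $I R_\frakp = \frakp R_\frakp$, so the $\frakp$-primary component of $I$ is precisely $\frakp$ itself. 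Since $I$ has no embedded primes, $I$ is the intersection of its minimal primes, and therefore radical; this already establishes the ``in particular'' clause.

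With $I$ now known to be radical, the radical case of \cite[Theorem 3.13]{Varbaro-Koley} applies verbatim to conclude that $\ini_<(I)$ is squarefree. The main subtlety I anticipate lies precisely in the radicality transfer: it is the unmixedness hypothesis built into property $\mathbf{P}$ (via Definition \ref{defn:PropertyP}) that drives the squeeze at each minimal prime and forces every primary component of $I$ to collapse onto the corresponding prime, so without unmixedness the argument would break immediately.
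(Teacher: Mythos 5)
Your proof is correct, and it takes a genuinely different and substantially more elementary route than the paper's own argument. Both you and the paper begin by invoking \cite[Theorem 3.13]{Varbaro-Koley} to reduce the squarefreeness of $\ini_<(I)$ to the radicality of $I$, but the way you establish radicality is quite different. The paper proceeds characteristic by characteristic: in positive characteristic it shows $R/(\underline{\alpha})$ is $F$-pure (since $\ini(\underline{\alpha})$ is a squarefree complete intersection), then uses the unmixedness identity $I=\underline{\alpha}:(\underline{\alpha}:I)$ together with \cite[Corollary 3.3]{PT24} to deduce that $R/I$ is $F$-pure and hence reduced; in characteristic $0$ it carries out a reduction-mod-$p$ argument, carefully tracking a finitely generated $\mathbb{Z}$-model, the compatibility of initial ideals with reduction, and the preservation of unmixedness via $\Ext$ modules. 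Your argument sidesteps all of this: you observe that pairwise-coprime leading terms make $\alpha_1,\ldots,\alpha_g$ a Gr\"obner basis of $\fraka$ by Buchberger's criterion, so $\ini_<(\fraka)$ is squarefree and $\fraka$ is radical, and then a localization/squeeze argument at each minimal prime $\frakp$ of $I$ (using $\height\frakp=g=\height\fraka$ and the fact that $\fraka R_\frakp=\frakp R_\frakp$) forces each primary component of $I$ to collapse to its associated prime. This is characteristic-free and avoids $F$-purity and spreading-out entirely. What the paper's route buys, beyond reducedness, is the $F$-purity of $R/I$ in positive characteristic --- a strictly stronger conclusion, though it is not needed for this proposition --- while your route buys brevity and uniformity across characteristics. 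One small wording nit: ``Since $I$ has no embedded primes, $I$ is the intersection of its minimal primes'' is not a consequence of unmixedness alone; it follows because you have already shown each minimal primary component equals the corresponding prime, so it would be clearer to phrase that step as combining the two facts.
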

\begin{proof}
By \cite[Theorem 3.13]{Varbaro-Koley}, it suffices to show that $I$ is radical. Indeed, since $I$ is unmixed of height $g>0$ (say), $g$ is also the maximum among the heights of the minimal primes of $I$, and if $\alpha_1, \ldots, \alpha_g$ is the $R$-regular sequence contained in $I$ given by the property $\mathbf{P}$ of $(R,I)$, then the product $\alpha=\prod_{i=1}^g\alpha_i\in I^g\subset I^{(g)}$ and $\ini_<(\alpha)=\prod_{i=1}^g\ini_<(\alpha_i)$ is squarefree.

To show that $I$ is radical, first assume that $K$ has positive characteristic. Since $R/(\underline{\alpha})$ is clearly $F$-pure, we get that $R/I$ is $F$-pure by \cite[Corollary 3.3]{PT24} (note that the unmixedness of $I$ implies that $I=\underline{\alpha}:(\underline{\alpha}:I)$). In particular $R/I$ is reduced.

If $K$ has characteristic $0$, a reduction modulo $p$ argument is possible. By \cite[Lemma 2.5]{Seccia}, and using the same notation, we can choose a finitely generated $\mathbb{Z}$-algebra $A$ such that, if $R=K[x_1,\ldots ,x_n]$, $R'=A[x_1,\ldots ,x_n]$ and $I'=I\cap R'$, we have $I=I'R$, $\ini_<(I)(p)=\ini_<(I(p))$ and $\ini_<(\underline{\alpha})(p)=\ini_<(\underline{\alpha}(p))$ for any prime number $p\gg 0$. Of course we still have $\underline{\alpha}(p)\subset I(p)$ for all $p\gg 0$, so if we show that $I(p)$ is unmixed of height $g$ for all $p\gg 0$ we can conclude, because by the previous part $\ini_<(I)(p)=\ini_<(I(p))$ would be squarefree for all $p\gg 0$, hence $\ini_<(I)$ would be squarefree.

To prove that $I(p)$ is unmixed of height $g$, we need to show that the Krull dimension of $\mathrm{Ext}_{R_p}^i(R_p/I(p),R_p)$ is less than $n-i$ for any $i\neq g$ (see \cite[Lemma 2.3.10]{BCRV}). Note that $\mathrm{Ext}_{R_p}^i(R_p/I(p),R_p)$ can be computed by a finite free resolution of $R_p/I(p)$, which for $p\gg 0$ is the same as a finite free resolution of $R/I$ modulo $p$ and $\mathrm{Ext}_{R_p}^i(R_p/I(p),R_p)\cong \mathrm{Ext}_{R}^i(R/I,R)(p)$ for all $p\gg 0$. The latter has Krull dimension less than $n-i$ whenever $i\neq g$ because $\dim \mathrm{Ext}_{R}^i(R/I,R)<n-i$ for all $i\neq g$ since $I$ is unmixed. This finishes the proof.
\end{proof}

\begin{lem}\label{lemma:initialIsLink}
Let $I\subset R$ be an unmixed ideal such that the pair $(R,I)$ has the property $\mathbf{P}$. If $I$ and $J$ are linked by the regular sequence $\underline{\alpha}$, then both $I \sim _{\underline{\alpha}}J$ and $\ini(I) \sim _{\ini(\underline{\alpha})} \ini(J)$ are geometric links.
\end{lem}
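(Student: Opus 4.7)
The plan is to first perform the reductions afforded by property $\mathbf{P}$, then establish that $\ini(I) \sim_{\ini(\underline{\alpha})} \ini(J)$ is a link, and finally deduce geometricity of both links from the squarefree monomial structure of $\ini(\underline{\alpha})$. By Proposition \ref{p:radical}, $\ini(I)$ is a squarefree monomial ideal and $I$ is radical. Remark \ref{remark:ForP} then gives that $\underline{\alpha}$ is an $R$-regular sequence; moreover the initial terms $m_i \colonequals \ini(\alpha_i)$, being pairwise coprime squarefree monomials, also form an $R$-regular sequence. Comparing the Hilbert series of the two complete intersections $(\underline{\alpha})$ and $(m_1, \ldots, m_g)$---both equal to $\prod_i(1 - t^{\deg m_i})/(1-t)^n$---with the inclusion $(m_1, \ldots, m_g) \subseteq \ini(\underline{\alpha})$ forces $\ini(\underline{\alpha}) = (m_1, \ldots, m_g)$. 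This is a squarefree monomial complete intersection, hence radical, with minimal primes the coordinate primes $P_\sigma = (x_{i_1}, \ldots, x_{i_g})$ indexed by choices of a variable $x_{i_k}$ from each $m_k$, all of height $g$; and $R/\underline{\alpha}$, $R/\ini(\underline{\alpha})$ are graded Gorenstein with the same Hilbert series.

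I would next show that $\ini(I) \sim_{\ini(\underline{\alpha})} \ini(J)$ is a link. From $IJ \subseteq \underline{\alpha}$ one gets $\ini(I) \cdot \ini(J) \subseteq \ini(IJ) \subseteq \ini(\underline{\alpha})$, so $\ini(J) \subseteq \ini(\underline{\alpha}) : \ini(I)$ and, symmetrically, $\ini(I) \subseteq \ini(\underline{\alpha}) : \ini(J)$. To upgrade these to equalities I would compare Hilbert series via Gorenstein duality: the isomorphism $J/\underline{\alpha} \cong \Hom_{R/\underline{\alpha}}(R/I, R/\underline{\alpha})$ expresses $HS(J)$ in terms of $HS(R/\underline{\alpha})$ and cohomological data of $R/I$, and the analogous formula computes $HS(\ini(\underline{\alpha}) : \ini(I))$ from $HS(R/\ini(\underline{\alpha})) = HS(R/\underline{\alpha})$ and the corresponding data of $R/\ini(I)$. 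Because $\ini(I)$ is squarefree, the ``no loss of information'' paradigm of Conca--Varbaro \cite{CoVa} transfers this data intact from $R/I$ to $R/\ini(I)$, yielding $HS(\ini(\underline{\alpha}) : \ini(I)) = HS(J) = HS(\ini(J))$. Combined with $\ini(J) \subseteq \ini(\underline{\alpha}) : \ini(I)$, equality of Hilbert series forces equality of ideals, and the symmetric argument completes the link.

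For geometricity I would localize at an arbitrary minimal prime $P_\sigma$ of $\ini(\underline{\alpha})$. Since $\ini(\underline{\alpha})$ is radical, $(R/\ini(\underline{\alpha}))_{P_\sigma}$ is a field and $\ini(\underline{\alpha})_{P_\sigma} = P_\sigma R_{P_\sigma}$. If $P_\sigma$ contained both $\ini(I)$ and $\ini(J)$, then the chain $\ini(\underline{\alpha}) \subseteq \ini(I) \subseteq P_\sigma$ would localize to $P_\sigma R_{P_\sigma} \subseteq \ini(I)_{P_\sigma} \subseteq P_\sigma R_{P_\sigma}$, forcing $\ini(I)_{P_\sigma} = P_\sigma R_{P_\sigma}$; but then, by the link just established, $\ini(J)_{P_\sigma} = (\ini(\underline{\alpha}) : \ini(I))_{P_\sigma} = P_\sigma R_{P_\sigma} : P_\sigma R_{P_\sigma} = R_{P_\sigma}$, contradicting $\ini(J) \subseteq P_\sigma$. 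Hence $\height(\ini(I) + \ini(J)) > g$, so the initial link is geometric, and the original link inherits geometricity through $\height(I + J) \geq \height \ini(I + J) \geq \height(\ini(I) + \ini(J)) > g$.

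The principal obstacle is the Hilbert series identity $HS(\ini(\underline{\alpha}) : \ini(I)) = HS(J)$ underpinning the second paragraph. It rests on a careful use of Gorenstein duality compatibly across the Gr\"obner degeneration, and hinges on the squarefreeness of $\ini(I)$ to guarantee that the cohomological invariants of $R/I$ survive intact after passing to $\ini(I)$. Once this identity is established, the geometricity argument in the third paragraph is a clean consequence of the radicality of $\ini(\underline{\alpha})$.
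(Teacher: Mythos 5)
Your proposal is correct in outline but takes a genuinely different and more roundabout route than the paper. The paper's argument hinges on a single observation you never make: since $\ini(\underline{\alpha})=(\ini(\alpha_1),\dots,\ini(\alpha_g))$ is squarefree, the complete intersection $(\underline{\alpha})$ itself is radical, so its minimal primes all have height $g$; because $I$ and $J$ are unmixed radical ideals of height $g$ containing $(\underline{\alpha})$ with $J=(\underline{\alpha}):I$, their minimal primes form a partition of $\operatorname{Min}((\underline{\alpha}))$, and therefore the link $I\sim_{\underline{\alpha}}J$ is already geometric and $(\underline{\alpha})=I\cap J$. Passing to initial ideals is then painless: $\ini(\underline{\alpha})=\ini(I\cap J)=\ini(I)\cap\ini(J)$ (the second equality because $\ini(\underline{\alpha})$ is squarefree, \cite[Prop.~2.1.12]{BCRV}), and since $\ini(I),\ini(J)$ are squarefree and unmixed of height $g$, they are geometrically linked by the complete intersection $\ini(\underline{\alpha})$. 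You instead try to establish the initial link first and infer geometricity afterward; the paper goes in the opposite order, and the direction it chooses is what keeps the argument short.

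The weak spot in your version is the Hilbert-series step. The Hilbert series of a colon ideal $C:A$ is \emph{not} a function of $HS(A)$ and $HS(C)$ alone --- $J/(\underline{\alpha})\cong\operatorname{Ext}^g_R(R/I,R)(-\sum\deg\alpha_i)$, and the graded Hilbert function of $\operatorname{Ext}^g$ depends on the full graded module structure, not just on $HS(R/I)$, unless $R/I$ is Cohen--Macaulay (which is not assumed here). Your appeal to the ``no loss of information'' of Conca--Varbaro to transfer ``cohomological data'' is the right idea but is left unspecified: you would need to invoke \cite{CoVa} together with graded local duality to conclude that $\operatorname{Ext}^g_R(R/I,R)$ and $\operatorname{Ext}^g_R(R/\ini(I),R)$ have the same graded Hilbert function, whence $HS(\ini(\underline{\alpha}):\ini(I))=HS(J)=HS(\ini(J))$ and the desired equality follows from the containment. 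With that filled in, your approach does work, and your localization argument for geometricity is clean and correct; but the whole duality machinery is avoided once one notices that $(\underline{\alpha})$ is radical.
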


\begin{proof}
Notice that $I$ and $J$ are actually geometrically linked by the regular sequence $\underline{\alpha}$, since the latter generates a radical ideal. This gives us $\underline{\alpha} = I \cap J$. Therefore
\[ \ini(\underline{\alpha}) = \ini(I\cap J) = \ini(I) \cap \ini(J),\]
where the last equality holds by \cite[Proposition 2.1.12]{BCRV} since the ideal $\ini(\underline{\alpha})$ is squarefree. Furthermore, both $\ini(I)$ and $\ini(J)$ are squarefree by Proposition \ref{p:radical}, so we get that the initial ideals $\ini(I)$ and $\ini(J)$ are also unmixed (see \cite[Corollary 2.1.8]{BCRV}). Finally, since $\ini(\underline{\alpha})$ is a regular sequence, it follows that the ideals $\ini(I)$ and $\ini(J)$ are geometrically linked.   
\end{proof}

In \cite{Pandey}, the first author proved that $\mathbf{P}$ propagates along generic links. To state this result precisely, we need to introduce a term order $<_1$ on $R[Y]$ starting from a term order $<$ on $R$. Given a monomial $u\in R[Y]$, we write $u_x\in R$ for the image of $u$ under the map of $R$-algebras from $R[Y]$ to $R$ sending each $Y_{i,j}$ to $1$. Define the following linear order on the variables of $R[Y]$:
\[ Y_{1,1}>Y_{2,2}>\cdots >Y_{g,g}> \text{ the remaining } Y_{i,j}> x_k, \]
where the order $<$ on the indeterminates $x_k$ is the one given in the property $\mathbf{P}$ and the order on ``the remaining $Y_{i,j}$" is arbitrary. Consider the following term order $<_1$ on $R[Y]$: If $u$ and $v$ are monomials of $R[Y]$,
\[u<_1 v \iff \begin{cases}
    u/u_x \mbox{ is lexicographically smaller than }v/v_x, \\
    u/u_x=v/v_x \mbox{ and }u_x<v_x.
\end{cases}\]

\begin{lem}\cite[Lemma 3.3]{Pandey} \label{lemma:Pinherited}
Let $I\subset R$ be an unmixed ideal of height $g>0$. If the pair $(R,I)$ has the property $\mathbf{P}$ for some term order $<$ on $R$, then the pair $(R[Y],J)$, where $J$ is the generic link of $I$ computed by choosing a generating set $\alpha_1,\ldots ,\alpha_r$ of $I$ containing $\underline{\alpha}$, also has $\mathbf{P}$ with respect to the term order $<_1$ on $R[Y]$. 

More precisely, if $Y$ is a $g\times r$ matrix of indeterminates and $[a_1,\ldots ,a_g]^T=Y[\alpha_1,\ldots ,\alpha_r]^T$, then $a_1,\ldots ,a_g$ form an $R[Y]$-regular sequence such that the squarefree monomials $\ini_{<_1}(a_1),\ldots ,\ini_{<_1}(a_g)$ also form an $R[Y]$-regular sequence.
\end{lem}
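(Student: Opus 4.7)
The plan is to give an explicit formula for $\ini_{<_1}(a_i)$: I will show that
\[\ini_{<_1}(a_i) \;=\; Y_{i,i}\cdot \ini_<(\alpha_i),\]
and then read off the required properties directly from this formula, using the hypothesis $\mathbf{P}$ for $(R,I)$.

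To compute these leading terms, first observe that $a_i=\sum_{j=1}^r Y_{i,j}\alpha_j$ is linear in the entries of $Y$, so every monomial $u$ appearing in $a_i$ has $Y$-part $u/u_x$ equal to a single variable $Y_{i,j}$, and $u_x$ is a monomial occurring in $\alpha_j$. Comparison under $<_1$ therefore reduces, for the $Y$-part, to comparing single indeterminates in the prescribed linear order
\[Y_{1,1}>Y_{2,2}>\cdots>Y_{g,g}>\text{ the remaining }Y_{i,j}>x_k.\]
Among the variables $Y_{i,1},\dots,Y_{i,r}$ occurring in $a_i$, the unique one belonging to the distinguished top block is $Y_{i,i}$; all other $Y_{i,j}$ lie in the ``remaining'' block and are therefore strictly smaller. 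Hence the leading $Y$-part of $a_i$ is $Y_{i,i}$, and the tie-break on the $x$-part selects, among monomials of $\alpha_i$, the one maximal under $<$, namely $\ini_<(\alpha_i)$. This gives the claimed equality $\ini_{<_1}(a_i)=Y_{i,i}\cdot\ini_<(\alpha_i)$.

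From here, the verification of $\mathbf{P}$ for $(R[Y],J)$ with respect to $<_1$ is immediate. Squarefreeness: $\ini_<(\alpha_i)$ is squarefree by the property $\mathbf{P}$ for $(R,I)$, and the variable $Y_{i,i}$ does not occur in $\ini_<(\alpha_i)\in R$, so the product $Y_{i,i}\cdot\ini_<(\alpha_i)$ is squarefree. Pairwise coprimality: for $i\neq j$ the variables $Y_{i,i}$ and $Y_{j,j}$ are distinct and disjoint from the $x$-variables, while $\ini_<(\alpha_i)$ and $\ini_<(\alpha_j)$ are coprime by $\mathbf{P}$ for $(R,I)$. By Remark~\ref{remark:ForP}, the coprime squarefree monomials $\ini_{<_1}(a_1),\ldots,\ini_{<_1}(a_g)$ form a regular sequence on $R[Y]$, and hence $a_1,\ldots,a_g$ form a regular sequence on $R[Y]$. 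Finally, $a_1,\dots,a_g$ are by construction a set of generators of $\mathfrak{a}$, hence they lie in $\mathfrak{a}\subseteq J$, and $\height(J)=\height(\mathfrak{a})=g$, so the regular sequence exhibited indeed has the length $\height(J)$ required by $\mathbf{P}$.

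There is no serious obstacle here: the whole argument is driven by the carefully engineered term order $<_1$. The one point that deserves attention is to confirm, by parsing the definition of $<_1$ correctly, that the distinguished $Y$-variables $Y_{k,k}$ dominate everything of lower rank uniformly across all of $R[Y]$, so that within each $a_i$ the unique diagonal index $j=i$ forces the leading $Y$-part to be $Y_{i,i}$; the rest of the statement then falls out by inheriting squarefreeness and coprimality from $\mathbf{P}$ on $R$.
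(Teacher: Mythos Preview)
Your argument is correct. The paper does not give its own proof of this lemma; it simply quotes it as \cite[Lemma~3.3]{Pandey}. The explicit formula $\ini_{<_1}(a_i)=Y_{i,i}\cdot\ini_<(\alpha_i)$ that you derive is exactly the intended mechanism behind the construction of $<_1$, and your justification---that each monomial of $a_i$ has $Y$-part a single variable $Y_{i,j}$, and that among these only $Y_{i,i}$ lies in the distinguished diagonal block---is the right way to read off the leading term from the definition of $<_1$. One small point you leave implicit is that $J$ is unmixed of height $g$, which is needed for the statement ``$(R[Y],J)$ has property~$\mathbf{P}$'' to be meaningful; but this is immediate once you have shown that $a_1,\ldots,a_g$ is a regular sequence, since then $J=\mathfrak{a}:IS$ is a link of $IS$ and hence unmixed of height $g$ by the standard facts recalled after Definition~\ref{defn:Linkage}.
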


The following proposition is a consequence of a more general construction which provides a free resolution---not minimal in general---of the link; we sketch a proof for the convenience of the reader.

\begin{prop}\label{prop:betti}
    Let $d,g$ be positive integers. Let $I$ be a homogeneous ideal of $R$ of height $g$ such that $R/I$ is a Cohen--Macaulay ring and $I$ has a $d$-linear resolution. Let $a_1,\ldots ,a_g$ be an $R$-regular sequence inside $I$ such that $\deg(a_i)=d+1$ for all $i=1,\ldots ,g$. If $J=(a_1,\ldots ,a_g):I$ then, for all $i\in\{1,\ldots ,g\}$, the graded Betti numbers of $R/J$ are given by the formula:
    $$\beta_{i,j}(R/J)=\begin{cases}b_{g-i+1} & \text{if }j=dg-d+i \mbox{ and }i\neq g-1, \\
    b_2+g & \text{if }j=dg-d+g-1 \mbox{ and }i= g-1, \\
    {\binom{g}{i}} & \text{if }j=di+i \mbox{ and }i<g-1, \\
    0 & \text{otherwise},
    \end{cases}$$
    where $b_s=\displaystyle\prod_{r\in\{1,\ldots ,g\}\setminus\{ s\}}\frac{d+r-1}{|r-s|}$, for $s\in\{1,\ldots ,g\}$, are the Betti numbers of $R/I$.
\end{prop}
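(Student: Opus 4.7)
The plan is to apply the Ferrand mapping--cone construction to produce an a priori non--minimal graded free resolution of $R/J$ starting from the minimal resolutions of $R/I$ and $R/\mathfrak{a}$, and then to show by direct inspection of internal degrees that, modulo one forced cancellation, the resulting complex is already minimal.

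To begin, I would fix the minimal $d$--linear resolution $F_\bullet$ of $R/I$, with $F_0=R$ and $F_i=R(-d-i+1)^{b_i}$ for $1\le i\le g$, together with the Koszul complex $K_\bullet$ on $a_1,\dots,a_g$, so that $K_i=R(-(d+1)i)^{\binom{g}{i}}$ and $K_g=R(-c)$ for $c:=g(d+1)$. Lifting the surjection $R/\mathfrak{a}\twoheadrightarrow R/I$ to a graded chain map $\phi\colon K_\bullet\to F_\bullet$ normalized so that $\phi_0=\mathrm{id}_R$ and dualizing into $\Hom_R(-,R)$, the shifted duals $F_\bullet^\vee[g]$ and $K_\bullet^\vee[g]$ become graded free resolutions of twists of $\omega_{R/I}$ and $R/\mathfrak{a}$ respectively (because both $R/I$ and $R/\mathfrak{a}$ are Cohen--Macaulay of codimension $g$), and $\phi^\vee[g]$ realizes the linkage inclusion $J/\mathfrak{a}\cong\omega_{R/I}\hookrightarrow R/\mathfrak{a}$ on zeroth cohomology. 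Twisting the mapping cone of $\phi^\vee[g]$ by $R(-c)$ then yields a graded free resolution $M_\bullet$ of $R/J$ whose $i$--th term unwinds as
\[
M_i\;=\;R(-(d+1)i)^{\binom{g}{i}}\;\oplus\;R(-(dg-d+i))^{b_{g-i+1}},\qquad 1\le i\le g-1,
\]
with $M_0=R$, $M_g=R(-(dg-d+g))^{b_1}$, and one spurious summand $R$ in homological position $g+1$ coming from $F_0^\vee\otimes R(-c)$. Since $\phi_0=\mathrm{id}_R$, this last summand is paired with a copy of $R$ inside $K_0^\vee\otimes R(-c)\subset M_g$ by a unit entry, so the two cancel, leaving a complex concentrated in positions $0,1,\dots,g$.

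Minimality and the Betti numbers then follow from a block--wise inspection of internal degrees. In the cone differential $M_{i+1}\to M_i$ the Koszul--to--Dual block vanishes by the lower--triangular shape of a cone, while the Koszul--to--Koszul, Dual--to--Dual, and Dual--to--Koszul blocks have internal--degree shifts equal to $d+1$, $1$, and $d(g-i-1)+1$ respectively, all strictly positive for $1\le i\le g-1$. Hence no non--zero entry lies outside the homogeneous maximal ideal and $M_\bullet$ is minimal. Finally, $(d+1)i=dg-d+i$ if and only if $i=g-1$: at that single index the two summands of $M_{g-1}$ merge into $R(-(dg-d+g-1))^{g+b_2}$, while for every $1\le i\le g-2$ the two distinct twists produce two distinct graded Betti numbers, in internal degrees $di+i$ and $dg-d+i$, matching the formula verbatim. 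The main obstacle I foresee is precisely this last bookkeeping on internal degrees---one has to rule out any further accidental coincidence of twists that could turn an entry of $\phi^\vee$ into a unit and force unwanted cancellations; once that is verified the claimed Betti numbers are immediate.
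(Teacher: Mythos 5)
Your proof is correct and follows essentially the same route as the paper: Ferrand's mapping cone construction, together with a degree count to see that the resulting complex is minimal (modulo the obvious cancellation in positions $g$ and $g+1$). The paper states this only as a sketch; your write-up fills in the precise bookkeeping that the paper elides, and all the key computations check out. Two small slips worth fixing: (i) the spurious tail summand is $F_0^\vee\otimes R(-c)=R(-c)$, not $R$, and the copy of $R$ it cancels against inside $M_g$ is $K_0^\vee\otimes R(-c)=R(-c)$ as well, so the cancellation is between two copies of $R(-c)$, which is what makes the entry $\phi_0^\vee=\mathrm{id}$ a degree-zero unit; and (ii) as written, $M_g=R(-(dg-d+g))^{b_1}$ is the term \emph{after} cancellation, whereas before cancellation $M_g=R(-(dg-d+g))^{b_1}\oplus R(-c)$ — worth saying explicitly to make the Gaussian-elimination step transparent (note also that since $M_{g+1}$ is exactly the cancelled summand, the elimination does not alter the differential $M_g\to M_{g-1}$ on the surviving part, so minimality of that map is unaffected). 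With those cosmetic corrections, the block-wise positivity of the internal degree shifts ($1$, $d+1$, and $d(g-i-1)+1$ for $1\le i\le g-1$) rules out any further unit entries, and the coincidence $(d+1)i=dg-d+i$ occurs only at $i=g-1$, giving the merged Betti number $b_2+g$ exactly as in the statement.
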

\begin{proof}
    Ferrand constructed a free resolution of the link $R/J$ by dualizing and shifting the mapping cone of the natural morphism from the Koszul complex of the complete intersection $R/(a_1,\ldots ,a_g)$ to the minimal free resolution of $R/I$ \cite[Proposition 2.5]{PS74}. The hypotheses that $I$ has a $d$-linear resolution and that $\deg(a_i)>d$ ensure that this free resolution is in fact minimal (apart from an obvious cancellation on the tail). The graded Betti numbers of $R/J$ can then be computed.
\end{proof}


The following two results will be useful tools for us in studying the initial ideal of the generic link:

\begin{lem}\label{lem:main}
Let $I\subset R$ be an unmixed homogeneous ideal of height $g>0$ such that $R/I$ is a Cohen--Macaulay ring, and
\begin{enumerate}[\quad\rm(1)]
    \item the pair $(R,I)$ has the property $\mathbf{P}$;
    \item $I$ has a $d$-linear resolution.
\end{enumerate}
Then the regular sequence $\underline{\alpha}$ as in $\mathbf{P}$ can be chosen to be part of a minimal generating set of $I$.
If $J$ is the generic link of $I$ (computed by a minimal generating set containing $\underline{\alpha}$), the graded Betti numbers of $J$ and of its initial ideal $\ini_{<_1}(J)$ are equal (position and values). More precisely, for  $i\in\{1,\ldots ,g\}$, we have, 
\[\beta_{i,j}(R[Y]/J)=\beta_{i,j}(R[Y]/\ini_{<_1}(J))=\begin{cases}b_{g-i+1} & \text{if }j=dg-d+i \mbox{ and }i\neq g-1, \\
    b_2+g & \text{if }j=dg-d+g-1 \mbox{ and }i= g-1, \\
    {\binom{g}{i}} & \text{if }j=di+i \mbox{ and }i<g-1, \\
    0 & \text{otherwise},
    \end{cases}\]
    where $b_s=\displaystyle\prod_{r\in\{1,\ldots ,g\}\setminus\{ s\}}\frac{d+r-1}{|r-s|}$, for $s\in\{1,\ldots ,g\}$, are the Betti numbers of $R/I$.
\end{lem}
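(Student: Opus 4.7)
The plan is to apply Proposition \ref{prop:betti} twice---once to $I$ to obtain the Betti numbers of $J$, and once to $\ini_<(I)$ to obtain those of $\ini_{<_1}(J)$---and to observe that both applications yield the same formula. The bridge between the two applications will be the Conca--Varbaro theorem on squarefree Gr\"obner degenerations \cite{CoVa}, which transfers to $\ini_<(I)$ all the input hypotheses of Proposition \ref{prop:betti} enjoyed by $I$.

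First I would verify that the $\mathbf{P}$-regular sequence $\underline{\alpha}$ can be chosen inside a minimal generating set of $I$. By Proposition \ref{p:radical}, $\ini_<(I)$ is squarefree, and by \cite{CoVa} it has the same regularity ($=d$) and depth as $I$; in particular $R/\ini_<(I)$ is Cohen--Macaulay with a $d$-linear resolution, so $\ini_<(I)$ is minimally generated by squarefree monomials of degree $d$. For each $\alpha_i$ supplied by property $\mathbf{P}$, the squarefree monomial $\ini_<(\alpha_i) \in \ini_<(I)$ is divisible by some minimal generator $u_i$ of $\ini_<(I)$, and pairwise coprimality of the $\ini_<(\alpha_i)$'s descends to the $u_i$'s. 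Choosing a homogeneous lift $\beta_i \in I_d$ with $\ini_<(\beta_i) = u_i$, a standard leading-term argument shows that $\beta_1,\ldots,\beta_g$ are $K$-linearly independent in $I_d$ and thus extend to a minimal generating set $\alpha_1,\ldots,\alpha_r$ of $I$; by Remark \ref{remark:ForP}, $\beta_1,\ldots,\beta_g$ is the desired $R$-regular sequence.

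Next, with $a_i = \sum_j Y_{i,j}\alpha_j \in R[Y]$ of degree $d+1$ and $J = (\underline{a}):IR[Y]$, Proposition \ref{prop:betti} applied over $R[Y]$ to $IR[Y]$ (Cohen--Macaulay with a $d$-linear resolution by base change) yields the claimed formula for $\beta_{i,j}(R[Y]/J)$. For the initial ideal, since $<_1$ restricts to $<$ on $R$ and $I \subset R$ we have $\ini_{<_1}(IR[Y]) = \ini_<(I)R[Y]$; by Lemma \ref{lemma:Pinherited}, $(R[Y],J)$ has property $\mathbf{P}$ with regular sequence $\underline{a}$, and Lemma \ref{lemma:initialIsLink} applied to the pair $(R[Y],J)$ in place of $(R,I)$ degenerates the link to the geometric link
\[
\ini_{<_1}(J) \;=\; \bigl(\ini_{<_1}(a_1),\ldots,\ini_{<_1}(a_g)\bigr) \,:\, \ini_<(I)R[Y].
\]
The initial forms $\ini_{<_1}(a_i) = Y_{i,i}\,\ini_<(\alpha_i)$ are squarefree of degree $d+1$ and form a regular sequence, while $\ini_<(I)R[Y]$ is Cohen--Macaulay with a $d$-linear resolution by the first step and base change. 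A second application of Proposition \ref{prop:betti} then produces the same formula for $\beta_{i,j}(R[Y]/\ini_{<_1}(J))$: the numbers $b_s$ depend only on $d$ and $g$, being uniquely determined by the Hilbert series of any height-$g$ Cohen--Macaulay quotient with a $d$-linear resolution, so they coincide for $R/I$ and $R/\ini_<(I)$.

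The main obstacle I expect will be invoking \cite{CoVa} at precisely the right moment, to transport the Cohen--Macaulay and $d$-linear resolution hypotheses from $I$ to $\ini_<(I)$; without this, one could not feed $\ini_{<_1}(J)$ into Proposition \ref{prop:betti} on the same footing as $J$ via the Ferrand mapping cone. Everything else amounts to a careful check that the generic-link construction degenerates termwise along $<_1$.
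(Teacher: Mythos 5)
Your proposal is correct and follows essentially the same strategy as the paper's own proof: use Proposition~\ref{p:radical} plus \cite{CoVa} to show $\ini_<(I)$ is squarefree, Cohen--Macaulay, with a $d$-linear resolution (hence $\underline{\alpha}$ can be chosen minimally generating), then apply Lemma~\ref{lemma:Pinherited}, Lemma~\ref{lemma:initialIsLink}, and Proposition~\ref{prop:betti} once to $IR[Y]$ and once to $\ini_<(I)R[Y]$, observing that the resulting Betti formulas agree. Your closing remark that the numbers $b_s$ depend only on $d$ and $g$ makes explicit a point the paper leaves implicit, but otherwise the two proofs are the same.
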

\begin{proof}
Consider the regular sequence $\underline{\alpha}\subseteq I$ as in $\mathbf{P}$. First, we show that $\underline{\alpha}$ can be chosen to be part of a minimal generating set of $I$. Since $(R,I)$ has the property $\mathbf{P}$, the monomial ideal $\ini_<(I)$ is squarefree by Proposition \ref{p:radical}, so by \cite{CoVa} we have that $\ini_<(I)$ also has a $d$-linear resolution. In particular, $\ini_<(I)$ is generated by squarefree monomials $u_1,\ldots ,u_r$ of degree $d$, which are the initial terms of a minimal generating set $f_1,\ldots ,f_r$ of $I$. Since for all $i\in \{1,\ldots,g\}$, there is a $j_i\in \{1,\ldots,r\}$ such that $u_{j_i}$ divides $\alpha_i$, therefore the polynomials $f_{j_1}, \ldots ,f_{j_g}$ form the regular sequence that we were looking for.

Extend $\underline{\alpha}$ to a minimal generating set $\alpha_1,\ldots ,\alpha_r$ of $I$. If $Y$ is a $g\times r$ matrix of indeterminates and $[a_1,\ldots ,a_g]^T=Y[\alpha_1,\ldots ,\alpha_r]^T$, then $a_1,\ldots ,a_g$ form an $R[Y]$-regular sequence where each $a_i$ has degree $d+1$. Since $J=(a_1,\ldots ,a_g):IR[Y]$, the ring $R[Y]/IR[Y]$ is Cohen--Macaulay, and $IR[Y]$ has a $d$-linear resolution, the formula for the graded Betti numbers of $R[Y]/J$ follows by Proposition \ref{prop:betti}.

By Lemma \ref{lemma:Pinherited}, $\ini_{<_1}(a_1),\ldots ,\ini_{<_1}(a_g)$ is an $R[Y]$-regular sequence of squarefree monomials. So using Lemma \ref{lemma:initialIsLink}, we have 
\[\ini_{<_1}(J)=(\ini_{<_1}(a_1),\ldots ,\ini_{<_1}(a_g)):\ini_{<_1}(IR[Y])\]
and each $\ini_{<_1}(a_i)$ has degree $d+1$. Since $\ini_{<_1}(IR[Y])=\ini_<(I)R[Y]$ is a squarefree monomial ideal, by \cite{CoVa} we get that $R[Y]/\ini_<(I)R[Y]$ is a Cohen--Macaulay ring and that the ideal $\ini_<(I)R[Y]$ has a $d$-linear resolution. The formula for the graded Betti numbers of $R[Y]/\ini_{<_1}(J)$ follows once again from Proposition \ref{prop:betti}.
\end{proof}

\begin{cor}\label{cor:GrobnerBasis}
Under the hypothesis of Lemma \ref{lem:main}, the reduced Gr\"obner basis of the generic link $J$ is a minimal generating set of $J$.    
\end{cor}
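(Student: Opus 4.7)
The plan is to deduce the corollary directly from the equality of graded Betti numbers established in Lemma \ref{lem:main}, comparing the number of minimal generators on the two sides.

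First, I would fix the notation. Let $\mathcal{G}=\{g_1,\ldots,g_s\}$ be the reduced Gr\"obner basis of $J$ with respect to the term order $<_1$. Since $J$ is a homogeneous ideal in the polynomial ring $R[Y]$, the elements of $\mathcal{G}$ are homogeneous (this is the standard fact that the unique reduced Gr\"obner basis of a homogeneous ideal consists of homogeneous polynomials, since a Gr\"obner basis can be produced by Buchberger's algorithm starting from homogeneous generators). By definition of a \emph{reduced} Gr\"obner basis, the leading terms $\ini_{<_1}(g_1),\ldots,\ini_{<_1}(g_s)$ are pairwise distinct, and, crucially, they form the unique minimal monomial generating set of $\ini_{<_1}(J)$: none of them is divisible by another, and together they generate $\ini_{<_1}(J)$. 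Hence
\[
s \;=\; \mu\bigl(\ini_{<_1}(J)\bigr) \;=\; \sum_{j\geq 0}\beta_{1,j}\!\left(R[Y]/\ini_{<_1}(J)\right).
\]

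Next, I invoke Lemma \ref{lem:main}, which gives $\beta_{1,j}(R[Y]/J)=\beta_{1,j}(R[Y]/\ini_{<_1}(J))$ for every $j\geq 0$. Summing over $j$,
\[
s \;=\; \sum_{j\geq 0}\beta_{1,j}(R[Y]/J) \;=\; \mu(J).
\]
Finally, $\mathcal{G}$ is a set of $\mu(J)$ homogeneous elements of $J$ that generates $J$. By the graded Nakayama lemma, any homogeneous generating set of a graded ideal in $R[Y]$ whose cardinality equals the minimal number of generators is itself a minimal generating set. Therefore $\mathcal{G}$ is a minimal generating set of $J$.

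Conceptually, there is essentially no obstacle: the whole content is packed into Lemma \ref{lem:main}. Without that lemma, the Gr\"obner basis could in principle have strictly more elements than the minimal generating set of $J$ (as indeed happens for the naive term order illustrated in Example \ref{example:term-order}); what saves us here is precisely that passing to the initial ideal via $<_1$ creates no new first syzygies, so $\mu(\ini_{<_1}(J))=\mu(J)$. The only point requiring mild care is to cite homogeneity correctly so that counting minimal generators via $\beta_{1,j}$ is legitimate.
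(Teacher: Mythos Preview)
Your proof is correct and follows essentially the same approach as the paper: both arguments observe that the elements of the reduced Gr\"obner basis of $J$ are counted (degree by degree) by $\beta_{1,j}(R[Y]/\ini_{<_1}(J))$, invoke Lemma~\ref{lem:main} to identify these with $\beta_{1,j}(R[Y]/J)$, and conclude that a generating set of $J$ with $\mu(J)$ elements is minimal. Your write-up is slightly more detailed (explicitly citing homogeneity of the reduced Gr\"obner basis and the graded Nakayama lemma), but the content is the same.
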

\begin{proof}
Note that for any $j\in\mathbb{N},$ the number of homogeneous polynomials of degree $j$ in the reduced Gr\"obner basis of the ideal $J$ (with respect to $<_1$) is given by \[\beta_{1,j}(R[Y]/\ini_{<_1}(J)).\] Since any Gr\"obner basis of $J$ is a set of generators of $J$, the assertion follows immediately from Lemma \ref{lem:main}.    
\end{proof}

\section{Description of the initial ideal of the generic link}\label{Section3}

Let $I$ be the ideal generated by the maximal minors of a matrix of indeterminates and $J$ its generic link. While the generators of $J$ are not known, the aim of this section is to give a precise description of the lead terms of the generators of $J$. We fix the notation for the remainder of the paper first.

\begin{setup}\label{setup}
Let $X$ be an $m \times n$ matrix of indeterminates over a field $K$. Without loss of generality, assume that $1\leq m < n$ (the case $m=n$ is trivial for our purposes, since $I_m(X)$ in this case is a principal ideal, so its generic link is generated by one variable). Let $I = I_m(X)$ be the ideal of $R= K[X]$ generated by the $m$-minors $\{\Delta_{1}, \dots , \Delta_{\eta}\}$ of $X$, where $\eta=\binom{n}{m}$. Let $Y$ be a matrix of indeterminates of size $(n-m+1) \times \eta$ and $S= R[Y]$. Let $\mathfrak{a} \subset IS$ be the ideal generated by the entries of the matrix $Y[\Delta_{1} \dots \Delta_{\eta}]^T$ and $J = \mathfrak{a}S:IS$ be the generic link of $I$. 

Fix any antidiagonal order $<$ on $R$, e.g. the reverse lexicographical order extending the linear order of the variables
\[x_{m,n}>x_{m,n-1}>\cdots>x_{m,1}>x_{m-1,n}>\cdots>x_{2,1}>x_{1,1},\]
and the term order on $S$ given by the property $\mathbf{P}$ for the pair $(S,J)$ as in Lemma \ref{lemma:Pinherited}. It is a classical result, which, to the best of our knowledge, goes back to Narasimhan \cite{Na86}, that
\[\ini_<(I)=(\prod_{i=1}^{m}x_{m-i+1,k_i}:1\leq k_1<k_2<\ldots <k_m\leq n).\]
Fix the initial ideals 
\[  
\ini(IS) \colonequals \ini_{<_1}(IS) = \ini_{<}(I)S  \quad \text{and} \quad \ini(J)\colonequals \ini_{<_1}(J).\]

We now setup the notation to describe the generators of $\ini(J)$. For any $\ell \in \NN$, the symbol $[\ell]$ denotes the set $\{1,2,\ldots,\ell\}$. Define the following sets (see Figure \ref{Fig1} for an illustrative example):
\begin{itemize}
    \item $V\colonequals\{(i,j)\in [m]\times [n]:m-i<j\leq n-i+1\}$;
    \item Fix a subset $A\colonequals\{a_1<a_2<\ldots <a_{m-1}\}\subset \{2,\ldots ,n\}$. Set $a_0\colonequals 1$ and $a_{m}\colonequals n+1$, and let
    \[D_A:=\{(i,j)\in V:a_{m-i}\leq j<a_{m-i+1} \ \forall \ i=1,\ldots ,m\}\subset V.\]
\end{itemize}
\begin{figure}[htbp] 
\begin{center}
\includegraphics[scale=0.22]{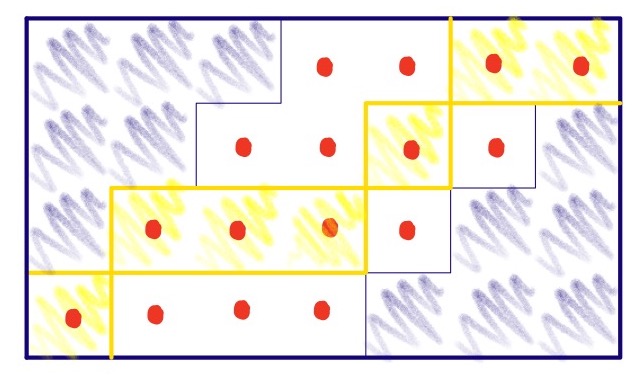} 
\caption{In this picture, and those following, $(1,1)$ is on the upper-left corner, while $(m,n)$ is on the lower-right one. Here, $m=4, \ n=7$, and $\ A=\{2,5,6\}$. The red dots form the set $V$. The subset $D_A\subset V$ consists of the red dots in the yellow area. So, $\beta_A$ is the product of the variables with red dots in the white area.}
\label{Fig1}
\end{center}
\end{figure}
Consider the following squarefree monomials of $R$:
\begin{gather*}
     \alpha_j\colonequals\prod_{i=1}^{m}x_{m-i+1,j+i-1} \quad \text{for}\quad j=1,\ldots ,n-m+1,\\
     \beta_A\colonequals\prod_{(i,j)\in V\setminus D_A}x_{i,j} \quad \text{for every subset} \quad A\subset \{2,\ldots ,n\} \quad \text{of cardinality}\quad m-1.
\end{gather*}
\end{setup}

\begin{rmk}\label{r:propbeta_A}
    Note that $\beta_A\notin \ini(I)$ for any $A$, i.e., for any \[A=\{a_1<a_2<\ldots <a_{m-1}\}\subset \{2,\ldots ,n\}.\] Indeed, if a generator $\prod_{i=1}^{m}x_{m-i+1,k_i}$ of $\ini(I)$ were to divide the monomial $\beta_A$, then we have $(m-i+1,k_i)\notin D_A \ \forall \ i\in [m]$. Since $1\leq k_1<k_2<\ldots <k_m\leq n$, for $i=1$, this means $k_1\geq a_1$, so for $i=2$, we must have $k_2\geq a_2$ (as $k_2>k_1$, so $k_2<a_1$ is not possible). Repeating this argument several times, for $i=m$ we must have $k_m\geq a_{m}=n+1$, which is a contradiction.
\end{rmk}

Before describing the minimal generators of $\ini(J)$, we note the following:
\begin{rmk}\label{rem:in(J)}
Since $(R,I)$ has the property $\mathbf{P}$, $R/I$ is a Cohen--Macaulay ring, and $I$ has an $m$-linear resolution, so by Corollary \ref{cor:GrobnerBasis} we have that the reduced Gr\"obner basis of $J$ is a minimal set of generators of $J$. By Lemma \ref{lem:main}, this Gr\"obner basis consists of $n-m+1$ homogeneous polynomials of degree $m+1$ (which are precisely the entries of the matrix $Y[\Delta_1,\ldots ,\Delta_r]^T$) and $\binom{n-1}{m-1}$ homogeneous polynomials each of degree $m(n-m)+1$ .
\end{rmk}

\begin{thm}\label{thm:in(J)}
The minimal generators of the initial ideal $\ini(J)$, equivalently the lead terms of the reduced Gr\"obner basis of $J$, are given as follows:
    \begin{enumerate}[\quad\rm(1)]
        \item $n-m+1$ squarefree monomials of $S$, each of degree $m+1$, given by \[Y_{j,j}\cdot \alpha_j \quad  \text{for} \quad j=1,\ldots ,n-m+1;\]
        \item $\binom{n-1}{m-1}$ squarefree monomials of $S$, each of degree $m(n-m)+1$, given by \[\prod_{j=1}^{n-m+1}Y_{j,j}\cdot \beta_A,\] for every subset $A\subset \{2,\ldots ,n\}$ of cardinality $m-1$.
    \end{enumerate}
\end{thm}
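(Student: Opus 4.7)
The plan is to combine the Betti number information from Lemma~\ref{lem:main} with an explicit identification of the leading terms, exploiting the colon characterization of $\ini(J)$ furnished by Lemma~\ref{lemma:initialIsLink}. By Remark~\ref{rem:in(J)}, $\ini(J)$ has exactly $n-m+1$ minimal generators in degree $m+1$ and exactly $\binom{n-1}{m-1}$ in degree $m(n-m)+1$, so the proof reduces to producing this many candidate squarefree monomials of each type, verifying they lie in $\ini(J)$, and checking that they are distinct and mutually incomparable.

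For part (1), the entries $a_j$ of $Y[\Delta_1,\ldots,\Delta_r]^T$ lie in $\mathfrak{a}\subseteq J$ and have degree $m+1$; by the construction in Lemma~\ref{lemma:Pinherited}, each $a_j$ has leading term $Y_{j,j}\alpha_j$, since the chosen regular sequence of antidiagonal $m$-minors occupies the first $n-m+1$ columns of $Y$. For part (2), Lemma~\ref{lemma:initialIsLink} gives
\[
\ini(J) = (Y_{1,1}\alpha_1,\ldots,Y_{n-m+1,n-m+1}\alpha_{n-m+1}) : \ini(IS),
\]
so it suffices to show that, for every admissible $A$ and every generator $u_K := \prod_{i=1}^m x_{m-i+1,k_i}$ of $\ini(IS)$, the monomial $\bigl(\prod_j Y_{j,j}\bigr)\beta_A\cdot u_K$ is divisible by some $Y_{j,j}\alpha_j$. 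Since $\prod_j Y_{j,j}$ contains every $Y_{j,j}$, after cancellation this reduces to the combinatorial claim: for every $K = \{k_1 < \cdots < k_m\}\subseteq[n]$ there exists $j\in[1,n-m+1]$ such that, for every $i\in[m]$ satisfying $a_{i-1}\leq j+i-1 < a_i$, one has $k_i = j+i-1$.

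This combinatorial claim is the main obstacle. My plan is as follows. The intervals $[a_{i-1},a_i)$ partition $[1,n]$, so define the non-decreasing map $\pi:[m]\to[m]$ by $\pi(i)=i'$ iff $k_i\in[a_{i'-1},a_{i'})$. A standard monotonicity argument (analyzing the sign of $\pi(i)-i$) shows that the fixed-point set $F=\{i:\pi(i)=i\}$ is always nonempty. The sets $K^*(j) := \{i : a_{i-1}\leq j+i-1 < a_i\}$ are contiguous subintervals of $[m]$, because the bounds $L_i := a_{i-1}-i+1$ and $U_i := a_i-i$ are non-decreasing in $i$ with $U_i = L_{i+1}$. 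I would then select $j$ using the maximal runs of consecutive integers in $K$: each run $R$ carries a constant ``shift'' $j_R := k_i - i + 1$ for $i\in R$, and one exhibits a run $R$ for which $K^*(j_R)\subseteq R$, since on such a run the required equation $k_i = j_R + i - 1$ holds automatically. The existence of such a run will require a delicate case analysis on the interplay between $\pi$, the runs of $K$, and the staircase $D_A$.

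The remaining verifications are routine. A direct computation gives $|D_A| = \sum_i (a_i - a_{i-1}) = n$ (each row $m-i+1$ of $D_A$ equals the interval $[a_{i-1},a_i)$ in full, as $a_{i-1}\geq i$ and $a_i - 1 \leq n-m+i$), hence $|V\setminus D_A| = (m-1)(n-m)$ and $\deg\bigl(\prod_j Y_{j,j}\cdot\beta_A\bigr) = (n-m+1)+(m-1)(n-m) = m(n-m)+1$, as required. Distinct $A$'s produce distinct $D_A$'s, and hence distinct $\beta_A$'s, because $A$ is recoverable from the row-decomposition of $D_A$. Finally, no $Y_{j,j}\alpha_j$ divides $\prod_{j'}Y_{j',j'}\cdot\beta_A$, since after canceling $Y_{j,j}$ this would force $\alpha_j\mid\beta_A$, contradicting Remark~\ref{r:propbeta_A}. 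Since these $\binom{n-1}{m-1}$ candidates precisely match the count of degree-$m(n-m)+1$ minimal generators supplied by Lemma~\ref{lem:main}, they exhaust all such generators, completing the proof.
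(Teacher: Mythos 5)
Your overall framework is correct and mirrors the paper's: you invoke Remark~\ref{rem:in(J)} to count the minimal generators of $\ini(J)$ by degree, use Lemma~\ref{lemma:initialIsLink} to write $\ini(J) = \ini(\mathfrak{a}):\ini(IS)$, verify the degree and distinctness bookkeeping, and use Remark~\ref{r:propbeta_A} to rule out divisibility between the degree-$(m+1)$ and degree-$(m(n-m)+1)$ candidates. Your reduction to the combinatorial claim --- that for every $K=\{k_1<\cdots<k_m\}$ and every $A$ there exists $j$ with $\alpha_j\mid\mu_K\beta_A$, i.e.\ $k_i=j+i-1$ whenever $a_{i-1}\leq j+i-1<a_i$ --- is exactly the claim the paper proves, and the reformulation via runs $R$ of $K$ with shift $j_R = k_i-i+1$ and the requirement $K^*(j_R)\subseteq R$ is an accurate restatement of what must be shown.

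However, the central step is missing. You set up a strategy (the map $\pi$, its fixed points, the contiguity of $K^*(j)$, the runs of $K$) and then write that ``the existence of such a run will require a delicate case analysis on the interplay between $\pi$, the runs of $K$, and the staircase $D_A$'' --- but you do not carry out that case analysis, so the existence of a run $R$ with $K^*(j_R)\subseteq R$ is asserted, not proved. This is precisely the heart of the theorem. The paper dispatches it with a short explicit construction: set $\epsilon:=\min\{i\in[m]:k_i+1<a_i\}$ (with the convention $a_m:=\infty$, so the minimum exists), put $j:=k_\epsilon-\epsilon+1$, and then a clean three-case check ($i=\epsilon$, $i>\epsilon$, $i<\epsilon$) verifies that for each $i$ either $j+i-1=k_\epsilon$ or $(m-i+1,j+i-1)\notin D_A$. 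In particular, the paper does not need the auxiliary notions of $\pi$ or fixed points. Without completing your case analysis --- or replacing it by such a direct construction --- your argument does not establish part~(2), and the counting step you lean on cannot rescue it, since the count only tells you \emph{how many} degree-$(m(n-m)+1)$ generators there are, not that your candidates \emph{are} generators.
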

The generators listed in item (1) are precisely the lead terms of the regular sequence defining the generic link $J$. The nontrivial generators of $\ini(J)$---those listed in item (2)---will play a crucial role in studying the symbolic powers of $J$ in the next section. Before proving the theorem, we examine it in some special cases.
\begin{rmk}\label{rmk:lead terms}
The only case in which the precise generators of the generic link $J$ are known is $m=1$, i.e., for the generic link of the homogeneous maximal ideal.  Let $I=(x_1,\ldots,x_n)$ be the homogeneous maximal ideal of the polynomial ring $R=K[x_1,\ldots,x_n]$. The generators of the generic link of $I$ were calculated by Huneke and Ulrich in \cite[Example 3.4]{Huneke-Ulrich88}: Let $Y\colonequals (Y_{i,j})$ be an $n \times n$ matrix of indeterminates and $S=R[Y]$. Then
\[J = Y[x_1, \cdots, x_n]^TS+\det(Y)S.\] 
Note that, for this case, Theorem \ref{thm:in(J)} says that under the monomial ordering as in Setup \ref{setup}, 
the above generators of $J$ form a Gr\"obner basis. Indeed, the only choice for the set $A$ as in Theorem \ref{thm:in(J)} (2) is the empty set $\phi$, and $\beta_\phi = 1$ so that by Theorem \ref{thm:in(J)},
\[\ini(J) = (Y_{1,1}x_1,\;\ldots,\; Y_{n,n}x_n,\;Y_{1,1}Y_{2,2}\ldots Y_{n,n}).\]
In general, the generators of $\ini(J)$ arise from intriguing combinatorial patterns. To see this, let $m=3$ and $n=5$. The possible choices for $A$ are
\[ \{2,3\},\;\{2,4\},\;\{2,5\},\;\{3,4\},\;\{3,5\},\; \text{and}\; \{4,5\}.\] 
The nontrivial generators of $\ini(J)$ (divided by the common factor $\prod_{j=1}^{n-m+1}Y_{j,j}$) are depicted in Figure \ref{Fig2}. 
\begin{figure}[htbp]
\begin{center}
\includegraphics[scale=0.22]{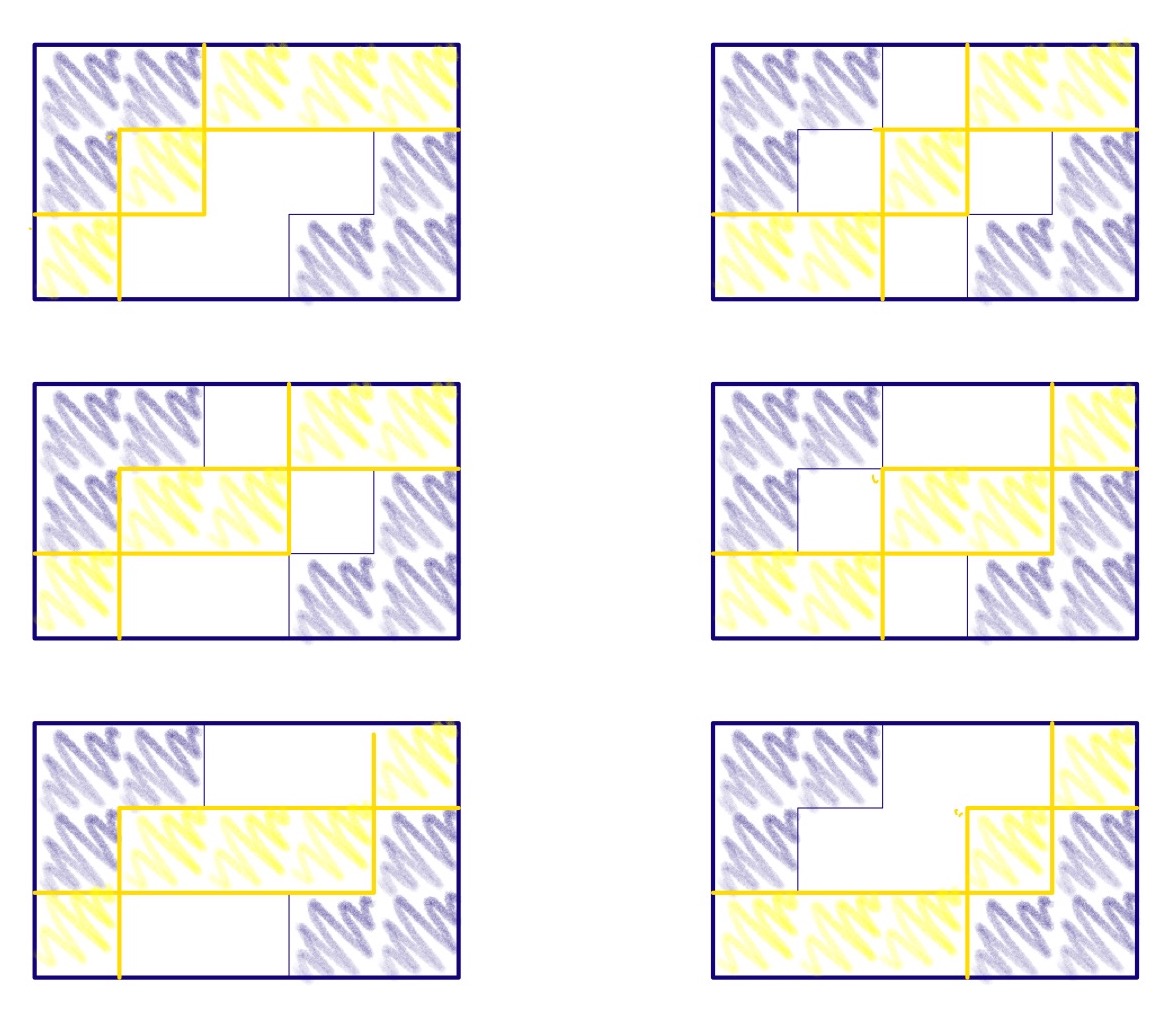} 
\caption{The six monomials $\beta_A$ are the products of the variables in the non-shaded area of the respective pictures. For instance, in the first picture, $A=\{2,3\}$ and $\beta_A=x_{2,3}x_{2,4}x_{3,2}x_{3,3}$. The monomials $\alpha_1,\alpha_2,$ and $\alpha_3$, are the three main anti-diagonals of the matrix. For instance, $\alpha_1=x_{1,3}x_{2,2}x_{3,1}$.}
\label{Fig2}
\end{center}
\end{figure}
\end{rmk}
\begin{proof}
By Remark \ref{r:propbeta_A} no monomial in (1) divides any monomial in (2). Other divisibility relations between monomials in (1) and (2) are of course not  possible for degree reasons. Since the degrees and the number of monomials asserted in Theorem \ref{thm:in(J)} agrees with those in Remark \ref{rem:in(J)}, in view of the fact that 
$\ini(J) = \ini(a):\ini(I)$ (Lemma \ref{lemma:initialIsLink}), it only remains to show that 
\[\ini(I)=(\prod_{i=1}^{m}x_{m-i+1,k_i}:1\leq k_1<k_2<\ldots <k_m\leq n)\] 
multiplies all the monomials listed in Theorem \ref{thm:in(J)} into \[\ini(\mathfrak{a})=(Y_{j,j}\cdot \alpha_j:j=1,\ldots ,n-m+1).\] This is clear for the monomials of type $(1)$. So we have to show that 
\[\ini(I)\cdot \prod_{j=1}^{n-m+1}Y_{j,j}\cdot \beta_A\in\ini(\mathfrak{a}) \quad \forall A\subset \{2,\ldots ,n\} \; \text{with}\; |A|=m-1. \] 
Pick a generator $\mu \colonequals \prod_{i=1}^{m}x_{m-i+1,k_i}$ of $\ini(I)$ and a subset $A=\{a_1,\ldots,a_{m-1}\} \subset \{2,\ldots,n\}$. It suffices to prove that:
\[\exists j\in[n-m+1] \quad \text{such that} \quad \alpha_j=\prod_{i=1}^{m}x_{m-i+1,j+i-1}|\mu \beta_A.\]
In order to prove the aforementioned, set
\[\epsilon \colonequals \min\{i \in [m] : k_i+1<a_{i}\} \quad \text{with} \; a_{m}\colonequals \infty,\quad \text{and}\;  j\colonequals k_\epsilon -\epsilon+1,\]
and note that $1\leq j\leq n-m+1$ since $\epsilon \leq k_\epsilon \leq n-(m-\epsilon)$. With this particular choice of $j$, we claim that $\alpha_j|\mu\beta_A$. To prove it, we need to show that
\[\forall i\in [m], \quad \text{either} \quad (m-i+1,j+i-1) \notin D_A \quad \text{or} \quad j+i-1=k_i. \]
Indeed, this can be verified for each case:
\begin{itemize}
    \item If $i=\epsilon$, then $j+\epsilon-1=k_\epsilon$ by the definition of $j$. 
    \item If $i>\epsilon$, then
    \[j+i-1=j+\epsilon-1+(i-\epsilon)=k_\epsilon+(i-\epsilon)<a_{\epsilon}+(i-\epsilon-1)\leq a_{\epsilon+i-\epsilon-1}=a_{i-1},\]
    so $(m-i+1,j+i-1) \notin D_A$.
    \item If $i<\epsilon$, suppose that $(m-i+1,j+i-1) \in D_A$, then $a_{i-1}\leq j+i-1<a_{i}$. Note that $k_i=j+i-1$: Indeed, $k_i \leq j+i-1=k_\epsilon-(\epsilon-i)$ is clear.
    \[\text{If} \quad k_i<j+i-1,\quad \text{then} \quad k_i+1<j+i\leq a_i,\]
    contradicting the minimality of $\epsilon$. 
\end{itemize}
This concludes the proof of the theorem.
\end{proof}

\section{The symbolic and ordinary powers of the generic link\\ are equal}\label{Section4}

To begin with, we recall the definition of symbolic powers. Let $\mathfrak{b}$ be a radical ideal in a Noetherian ring $A$. For a fixed $\ell \in \NN$, the $\ell$th symbolic power of $\mathfrak{b}$ is
\[\mathfrak{b}^{(\ell)}\colonequals \bigcap_{\frakp \in \Ass(\mathfrak{b})} \frakp^\ell A_{\frakp}\cap A. \]
It turns out that the equality $\mathfrak{b}^{(\ell)}=\mathfrak{b}^\ell$ says that the ideal $\mathfrak{b}^\ell$ has no embedded primes.

We remind the reader that we stick to the notation of Setup \ref{setup}. The aim of this section is to establish the equality $J^{(\ell)}=J^\ell$ of the symbolic and ordinary powers of the generic link for all $\ell \in \NN$. The following general observation will be quite useful for us (see \cite[Chapter 7]{BCRV} for the background on $F$-singularities):

\begin{prop}\label{prop:crucialmethod}
Let $U$ be a proper ideal in a polynomial ring over a field. If there exists a term order $\prec$ such that $\ini_{\prec}(U)$ is a squarefree monomial ideal and $\ini_{\prec}(U)^{(\ell)}=\ini_{\prec}(U)^\ell$ for all $\ell \in \NN$, then:
\begin{enumerate}[\quad\rm(1)]
\item $U^{(\ell)}=U^\ell$ for all $\ell \in \NN$.
\item $\ini_{\prec}(U^\ell)=\ini_{\prec}(U)^\ell$ for all $\ell \in \NN$.
\item The Rees algebra $\mathcal{R}(U)=\oplus_{\ell\in\NN}U^\ell$ has rational singularities in characteristic 0 and is $F$-rational in positive characteristic.
\item If $U$ is unmixed, it has the property $\mathbf{P}$.
\item Over an $F$-finite field of positive characteristic, if $U$ is unmixed then both $\mathcal{R}(U)$ and the associated graded ring $\gr(U)=\oplus_{\ell\in\NN}U^\ell/U^{\ell+1}$ are $F$-split.
\end{enumerate}
\end{prop}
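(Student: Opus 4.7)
The plan is to prove (1) and (2) simultaneously via a chain of initial-ideal containments combined with a Hilbert-function comparison, and then derive (3)--(5) by transferring structural information from the squarefree monomial special fiber $\ini_\prec(U)$.

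The starting point for (1) and (2) is the chain of inclusions
$$\ini_\prec(U)^\ell \;\subseteq\; \ini_\prec(U^\ell) \;\subseteq\; \ini_\prec(U^{(\ell)}) \;\subseteq\; \ini_\prec(U)^{(\ell)} \;=\; \ini_\prec(U)^\ell,$$
valid for every $\ell\geq 1$, in which the first two inclusions are automatic (since $U^\ell \subseteq U^{(\ell)}$) and the final equality is our hypothesis. If the third inclusion can be established, the entire chain collapses to equalities: (2) follows at once, and the resulting equality $\ini_\prec(U^\ell) = \ini_\prec(U^{(\ell)})$, together with the preservation of Hilbert functions under Gröbner degeneration, forces $U^\ell = U^{(\ell)}$, proving (1).

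The hard step, which I expect to be the main obstacle, is the third inclusion $\ini_\prec(U^{(\ell)}) \subseteq \ini_\prec(U)^{(\ell)}$. I would attack it through the Rees algebra: embed $\mathcal{R}(U) \subset R[t]$, extend $\prec$ to a term order $\prec'$ on $R[t]$ giving $t$ the smallest weight, and argue that $\mathcal{R}(U)$ admits a flat Sagbi/Gröbner degeneration to the $R$-subalgebra of $R[t]$ generated by $\ini_\prec(U)\cdot t$, namely $\mathcal{R}(\ini_\prec(U))$. By our hypothesis this special fiber coincides with the symbolic Rees algebra of the squarefree monomial ideal $\ini_\prec(U)$, which by the theorem of Herzog--Hibi--Trung--Zheng is a finitely generated, normal, Cohen--Macaulay domain. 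Flatness then matches the Hilbert function in each internal degree $\ell$, forcing $\dim_K (U^\ell)_d = \dim_K(\ini_\prec(U)^\ell)_d$ for all $d$; combining this with the already-established inclusion $\ini_\prec(U)^{(\ell)} \subseteq \ini_\prec(U^{(\ell)})$ yields the reverse containment by a dimension count, completing the argument.

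For (3), the flat degeneration above specializes $\mathcal{R}(U)$ to the toric algebra $\mathcal{R}(\ini_\prec(U))$, which is $F$-rational in positive characteristic (and has rational singularities in characteristic zero) by classical results for Rees algebras of squarefree monomial ideals with symbolic equal to ordinary powers; both properties deform upward along flat families. For (4), since $\ini_\prec(U)$ is squarefree and unmixed of height $g = \height(U)$, a short combinatorial argument on the minimal primes of its Stanley--Reisner complex produces $g$ pairwise coprime squarefree minimal monomial generators $u_1,\ldots,u_g$ of $\ini_\prec(U)$; any lifts $\alpha_i \in U$ with $\ini_\prec(\alpha_i)=u_i$ realize property $\mathbf{P}$. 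For (5), the Stanley--Reisner ring $R/\ini_\prec(U)$ is automatically $F$-split, so $R/U$ is $F$-split by \cite[Corollary 3.3]{PT24}; likewise $F$-splitness of the toric Rees algebra $\mathcal{R}(\ini_\prec(U))$ lifts through the degeneration to $\mathcal{R}(U)$, and $\gr(U)$ inherits it as a quotient.
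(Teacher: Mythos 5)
Your chain of inclusions for (1)--(2) is the right skeleton, but the proposed way of closing it has a circular gap, and your arguments for (4) and (5) also do not go through.

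For the third inclusion $\ini_\prec(U^{(\ell)}) \subseteq \ini_\prec(U)^{(\ell)}$: the paper simply cites a theorem of Sullivant (\cite[Proposition~5.1]{Sullivant}), which says precisely that when $\ini_\prec(U)$ is squarefree this containment holds for every $\ell$. Your replacement argument via a Sagbi degeneration of $\mathcal{R}(U)$ is circular. The initial subalgebra of $\mathcal{R}(U)$ inside $R[t]$ is always $\bigoplus_\ell \ini_\prec(U^\ell)\,t^\ell$; it equals $\mathcal{R}(\ini_\prec(U)) = \bigoplus_\ell \ini_\prec(U)^\ell\, t^\ell$ if and only if $\ini_\prec(U^\ell) = \ini_\prec(U)^\ell$ for all $\ell$, which is statement (2). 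So you cannot ``argue that $\mathcal{R}(U)$ admits a flat Sagbi degeneration to $\mathcal{R}(\ini_\prec(U))$'' as an input to proving (2); that degeneration is a consequence of (2), and the paper uses it in exactly that direction to prove (3).

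For (4), your phrasing suggests that any unmixed squarefree monomial ideal of height $g$ has $g$ pairwise coprime monomial generators. This is false: the edge ideal $(xy,\,yz,\,xz) \subset K[x,y,z]$ is unmixed of height $2$ but contains no pair of coprime monomials. The hypothesis $\ini_\prec(U)^{(\ell)}=\ini_\prec(U)^\ell$ is essential here; the paper's argument is that the product $\nu$ of all the variables lies in $\ini_\prec(U)^{(g)} = \ini_\prec(U)^g$, and a monomial factorization of $\nu$ within $\ini_\prec(U)^g$ then yields $g$ pairwise coprime monomials of $\ini_\prec(U)$ that lift to the regular sequence required by property $\mathbf{P}$.

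For (5), the step ``$F$-splitness of $\mathcal{R}(\ini_\prec(U))$ lifts through the degeneration to $\mathcal{R}(U)$'' is not valid: $F$-purity does not deform and hence does not pass up a Gr\"obner degeneration in general (Fedder, Singh); the paper even remarks on this obstruction in the context of $F$-regularity. The correct mechanism is the theory of symbolic $F$-split filtrations of De~Stefani--Monta\~no--N\'u\~nez-Betancourt: the element $\nu \in \ini_\prec(U^{(g)})$ obtained in (4) certifies that $U$ is symbolic $F$-split (\cite[Definition~5.2, Lemma~6.2]{dS-Mn-NB}), and then \cite[Theorem~4.7]{dS-Mn-NB} gives $F$-splitness of both $\mathcal{R}(U)$ and $\gr(U)$ directly, without attempting to push $F$-splitness across a Gr\"obner degeneration. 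Also note that $\gr(U)$ is a quotient of the extended Rees algebra, not of $\mathcal{R}(U)$, and $F$-splitness does not descend to arbitrary quotients, so the final clause of your (5) is not sound either.
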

\begin{proof}
It is harmless to assume that the field is perfect. For each $\ell \in \NN$, we have the chain of containments
\[\label{eq:containments}
\ini_{\prec}(U)^\ell \subset \ini_{\prec}(U^\ell) \subset \ini_{\prec}(U^{(\ell)}) \subset \ini_{\prec}(U)^{(\ell)}, 
\]
where the last containment is due to \cite[Proposition 5.1]{Sullivant} because $\ini_{\prec}(U)$ is squarefree. Since $\ini_{\prec}(U)^\ell=\ini_{\prec}(U)^{(\ell)}$ by hypothesis, each containment in the above display is an equality and we get (1) and (2).

As for (3), note that from (2) we have that the initial subalgebra of the Rees algebra is the Rees algebra of the initial ideal, i.e. $\ini_{\prec}(\mathcal{R}(U))=\mathcal{R}(\ini_{\prec}(U))$. The latter is a finitely generated normal domain since $\ini_{\prec}(U)^{(\ell)}=\ini_{\prec}(U)^\ell$ for all $\ell \in \NN$, so (3) follows from \cite[Corollary 7.3.13]{BCRV}.

To prove (4), say that $U$ is unmixed of height $g>0$. Since $\ini_{\prec}(U)$ is squarefree, it is unmixed of height $g$ as well by \cite[Corollary 2.1.8]{BCRV}. So any associated prime ideal of $\ini_{\prec}(U)$ is generated by $g$ variables, and therefore, if $\nu$ is the product of all the variables of the ambient polynomial ring, $\nu\in \mathfrak{p}^g=\mathfrak{p}^{(g)} \ \forall \ \mathfrak{p}\in\mathrm{Ass}(\ini_{\prec}(U))$.  Hence $\nu$ is a squarefree monomial contained in
\[\ini_{\prec}(U^{(g)})=\ini_{\prec}(U^g)=\ini_{\prec}(U)^g=\ini_{\prec}(U)^{(g)}=\bigcap_{\mathfrak{p}\in \mathrm{Ass}(\ini_{\prec}(U))}\mathfrak{p}^g.\]
Exploiting the above chain of equalities, there exist monomials $u_1,\ldots ,u_g\in\ini_{\prec}(U)$ such that $\prod_{i=1}^gu_i|\nu$. Therefore $u_i$ and $u_j$ must be coprime for all $i\neq j$. For all $i\in[g]$, choose $\alpha_i\in U$ such that $\ini_{\prec}(\alpha_i)=u_i$, then $\underline{\alpha}=\alpha_1,\ldots ,\alpha_g$ is the regular sequence in $U$ as in the property $\mathbf{P}$.

To prove (5), note that as shown in item (4), we have $\nu\in\ini(U^{(g)})$. Then $U$ is a \textit{symbolic $F$-split} ideal by \cite[Definition 5.2, Lemma 6.2]{dS-Mn-NB}, i.e., $\{U^{(\ell)}=U^\ell\}_{\ell\in\NN}$ is an \textit{$F$-split filtration}. That is, both $\mathcal{R}(U)$ and $\gr(U)$ are $F$-split rings by \cite[Theorem 4.7]{dS-Mn-NB}.
\end{proof}

In view of Proposition \ref{prop:crucialmethod}, our goal is to prove the equality of symbolic and ordinary powers of the initial ideal $\ini(J)$. In order to do this, we will need the description of $\ini(J)$ obtained in Theorem \ref{thm:in(J)}. Let
\[N:=(\beta_A:A\subset \{2,\ldots ,n\},  \ |A|=m-1) \subset S\]
be the ideal of the nontrivial generators of $\ini(J)$. Recall that by Theorem \ref{thm:in(J)},
$$\ini(J)=(Y_{1,1}\alpha_1,\ldots ,Y_{n-m+1,n-m+1}\alpha_{n-m+1})+\prod_{j=1}^{n-m+1}Y_{j,j}\cdot N.$$

One difficulty in proving that the ordinary and symbolic powers of $\ini(J)$ agree lies in the fact that the analogous statement for $N$ is not true in general:  

\begin{exa}\label{exU}
Assume $m>2$ and $n>m+1$. We claim that $N^{(2)}\neq N^2$. Note that if $m>1$ and $n>m$, then the ideal $N$ has height $2$ as $N\subset (x_{m-1,2},x_{m,2})$. Let $\nu$ be the product of all the variables of $S$. Since $N$ is squarefree, we have \[\nu \in N^{(2)} = \bigcap_{\mathfrak{p}\in\mathrm{Min}(N)}\mathfrak{p}^{2} .\] However, for any subsets $A$ and $B$ of $\{2,\ldots ,n\}$ of cardinality $m-1$, $\beta_A\beta_B$ is \textit{not} squarefree if $m>2$ and $n>m+1$. For instance, at least one among the variables $x_{m-2,3},x_{m-1,3}$, or $x_{m,3}$ must divide both $\beta_A$ and $\beta_B$ so that $\nu\notin N^2$. In fact, in Corollary \ref{cor:NontrivialGenerators}, we will show that $N^{(\ell)}=N^\ell$ for all $\ell$ if and only if $m\leq 2$ or $n=m+1$.
\end{exa}
    
In order to establish the equality $\ini(J)^{(\ell)}=\ini(J)^{\ell}$ of the symbolic and ordinary powers of the initial ideal, we will make use of the following criterion of Monta\~no and N\'u\~nez Betancourt:

\begin{thm}\label{t:useful}\cite[Corollary 4.10]{Jonathon-Luis}
Let $W\subset S$ be a squarefree monomial ideal. Then $W^{(\ell)}=W^{\ell}$ for all $\ell\in\NN$ if and only if the ideal \[(W^{r+1})^{[2]}:W^{2r+1}\] contains the product $\nu$ of all the variables of $S$ for any $r\in\NN$ with $r<\mu(W)/2$. Here $(W^{r+1})^{[2]}$ is the monomial ideal generated by all the squares of the monomials in $W^{r+1}$.
\end{thm}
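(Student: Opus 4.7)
The plan is to establish the biconditional using variable-exponent combinatorics. Writing the squarefree monomial ideal $W = \bigcap_{i=1}^{s}\mathfrak{p}_i$ as an intersection of monomial primes, we have $W^{(\ell)} = \bigcap_{i}\mathfrak{p}_i^\ell$, and for a monomial $f$, membership $f \in \mathfrak{p}^\ell$ is detected by the $\mathfrak{p}$-order of $f$ (the sum of exponents of variables of $\mathfrak{p}$ appearing in $f$). The criterion $\nu \in (W^{r+1})^{[2]} : W^{2r+1}$ unfolds monomial-wise into the statement: for every monomial $v \in W^{2r+1}$, the product $\nu v$ is divisible by $u^2$ for some monomial $u \in W^{r+1}$.

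For the ``$\Leftarrow$'' direction, I would proceed by induction on $\ell$; the base case $\ell = 1$ is trivial. For the inductive step, given a monomial $f \in W^{(\ell)}$, apply the colon hypothesis for an appropriately chosen $r$ to a witness $v \in W^{2r+1}$ built from $f$ together with a partial ordinary-power decomposition supplied by the inductive hypothesis at smaller $\ell$. The resulting divisibility $u^2 \mid \nu v$ with $u \in W^{r+1}$ extracts an explicit $W^{r+1}$-factor that reduces the problem to a smaller instance of $\ell$, completing the induction.

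For the ``$\Rightarrow$'' direction, fix $r \in \NN$ and a monomial $v \in W^{2r+1}$. Since $v \in W^{(2r+1)}$ and the $\mathfrak{p}$-order of $\nu$ equals $|\mathfrak{p}| \geq 1$ for every minimal prime $\mathfrak{p}$ of $W$, we obtain $\nu v \in W^{(2r+2)} = W^{2r+2} = (W^{r+1})^{2}$ by hypothesis. The main obstacle, and the genuinely hard step, is the \emph{symmetrization}: upgrading a factorization $\nu v = w_1 w_2 g$ with $w_1, w_2 \in W^{r+1}$ into one of the form $u^2 g'$ with $u \in W^{r+1}$. This is subtle because in general $(W^{r+1})^{[2]} \subsetneq (W^{r+1})^2$, as illustrated by $x_1 x_2 \in (x_1, x_2)^2 \setminus (x_1^2, x_2^2)$. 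The hypothesis $W^{(k)} = W^k$ for all $k$ corresponds, via the Herzog--Hibi--Trung--Zheng theorem referenced in the introduction, to the max-flow-min-cut property of the clutter of minimal generators of $W$; this polyhedral property provides precisely the integer-programming flexibility needed to redistribute exponents between $w_1$ and $w_2$, while the extra single copy of each variable provided by $\nu$ absorbs any unit-exponent parity imbalance. The failure of the criterion for the edge ideal of a triangle already at $r = 1$ (where $v = x_1^2 x_2^2 x_3^2$ forces $\nu v = x_1^3 x_2^3 x_3^3 \notin (W^2)^{[2]}$) confirms that this symmetrization step is genuinely deep and cannot be bypassed by generic containments alone.
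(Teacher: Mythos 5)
This statement is cited in the paper as \cite[Corollary 4.10]{Jonathon-Luis}; the paper gives no proof of its own, so there is nothing in the paper to compare against. Assessing your sketch on its own terms, it has genuine gaps in both directions.

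For the ``$\Rightarrow$'' direction you correctly reduce to the observation that $\nu v\in W^{(2r+2)}=W^{2r+2}=(W^{r+1})^2$, and you correctly identify that the remaining jump from $(W^{r+1})^2$ to $(W^{r+1})^{[2]}$ is the heart of the matter. But you then discharge that jump by asserting that the max-flow--min-cut property of the clutter ``provides precisely the integer-programming flexibility needed to redistribute exponents.'' This is not an argument: the HHTZ equivalence between $W^{(\ell)}=W^\ell$ and MFMC is a deep polyhedral theorem of roughly the same depth as the result you are trying to prove, and you do not extract from it the concrete inequality you need. What is actually required is the following reformulation, which you never state: since $\nu v=\prod x_i^{a_i+1}$ and $m^2\mid \nu v$ iff $m\mid \prod x_i^{\lceil a_i/2\rceil}$, the claim $\nu W^{2r+1}\subset (W^{r+1})^{[2]}$ is equivalent to the ``half-integer rounding'' statement that $v\in W^{2r+1}$ forces $\prod x_i^{\lceil a_i/2\rceil}\in W^{r+1}$. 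Showing that MFMC implies this rounding property is precisely the content of the step you skip, and your own triangle example shows that the containment $(W^{r+1})^2\supset\nu W^{2r+1}$ alone does \emph{not} yield $(W^{r+1})^{[2]}\supset\nu W^{2r+1}$ even after applying the ordinary equality $W^{2r+2}=(W^{r+1})^2$; so the symmetrization has to be argued, not waved at.

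The ``$\Leftarrow$'' direction is more seriously incomplete: the phrases ``apply the colon hypothesis for an appropriately chosen $r$,'' ``a witness $v\in W^{2r+1}$ built from $f$ together with a partial ordinary-power decomposition supplied by the inductive hypothesis,'' and ``extracts an explicit $W^{r+1}$-factor that reduces the problem to a smaller instance of $\ell$'' are placeholders, not steps. No choice of $r$ is given, no construction of $v$ from a symbolic-power element $f$ is given, and it is not explained why the extracted factor lowers $\ell$ rather than merely reshuffling exponents. As written this direction cannot be evaluated, let alone verified. Since the cited source is the appropriate reference, if you want a self-contained argument you should either reproduce the Fedder-type/$F$-split-filtration mechanism from that paper (the criterion $\nu\in(W^{r+1})^{[2]}:W^{2r+1}$ is a characteristic-$2$ Fedder-style condition on the Rees filtration) or carry out the polyhedral rounding argument explicitly; either way both directions need to be filled in with actual computations.
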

    
In what follows (Remark \ref{rem:exp} -- Lemma \ref{lem:mostcases}), we make a number of careful observations in order to put ourself in a situation where we can apply Theorem \ref{t:useful}. 

\begin{rmk}\label{rem:exp}
Let $A=\{a_1<\ldots <a_{m-1}\}$, set $a_0=1$ and $a_{m}=n+1$. Let $(i,j) \in V$ and $ e_{x_{i,j}}(\beta_A)$ be the highest power of $x_{i,j}$ which divides $\beta_A$. Then we have:
    $$
    e_{x_{i,j}}(\beta_A)=\begin{cases}
        0 & \mbox{ if } \ (i,j)\in D_A, \\
        1 & \mbox{ otherwise}.
    \end{cases}$$
Equivalently,
    $$
    e_{x_{i,j}}(\beta_A)=\begin{cases}
        0 & \mbox{ if } \ a_{m-i}\leq j<a_{m-i+1}, \\
        1 & \mbox{ otherwise}.
    \end{cases}$$
The variables dividing $\beta_A$ split into two groups (see Figure \ref{Fig3}). To make this precise, for $(i,j)\in V\setminus D_A$, we say that \textit{$(i,j)$ is to the right of $D_A$} if $j\geq a_{m-i+1}$, while\textit{ $(i,j)$ is to the left of $D_A$} if $j<a_{m-i}$. Clearly, if $(i,j)\in V\setminus D_A$, then $(i,j)$ is either to the right or to the left of $D_A$.

\begin{figure}[htbp] 
\begin{center}
\includegraphics[scale=0.22]{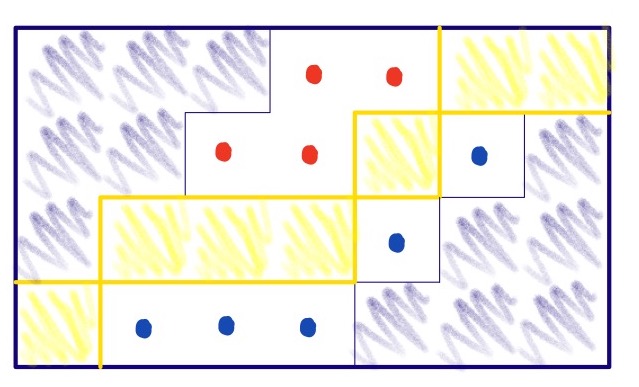} 
\caption{Here $m=4, \ n=7$, and $A=\{2,5,6\}$. The red dots are to the left of $D_A$, while the blue dots are to the right of $D_A$.}
\label{Fig3}
\end{center}
\end{figure}
\end{rmk}

Given $A=\{a_1<\ldots <a_{m-1}\},\; B=\{b_1<\ldots <b_{m-1}\}\subset\{2,\ldots ,n\}$, define 

\begin{align*}
A\wedge B\colonequals\{\min\{a_1,b_1\},\ldots ,\min\{a_{m-1},b_{m-1}\}\}, \\
A\vee B\colonequals\{\max\{a_1,b_1\},\ldots ,\max\{a_{m-1},b_{m-1}\}\}.
\end{align*}

The following observation introduces a ``straightening law" on the ideal $N$:
\begin{lem}\label{lem:straighten}
Let $A$ and $B$ are subsets of $\{2,\ldots ,n\}$ of cardinality $m-1$. Then we have \[\beta_A\cdot \beta_B=\beta_{A\wedge B}\cdot \beta_{A\vee B}.\]
\end{lem}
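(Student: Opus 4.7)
The plan is to verify the identity variable by variable, that is, to show that
\[e_{x_{i,j}}(\beta_A)+e_{x_{i,j}}(\beta_B)=e_{x_{i,j}}(\beta_{A\wedge B})+e_{x_{i,j}}(\beta_{A\vee B})\]
for every $(i,j)\in V$. First, I would check that $A\wedge B$ and $A\vee B$ are again $(m-1)$-subsets of $\{2,\ldots,n\}$, which is immediate because the min (resp. max) of two strictly increasing sequences is strictly increasing. Since $\beta_A$ is squarefree and supported in $V$, and the same holds for the other three monomials, the product is determined by these exponents.

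Next, I would rephrase Remark \ref{rem:exp} in terms of a rank function. For an $(m-1)$-subset $E=\{e_1<\cdots<e_{m-1}\}\subset\{2,\ldots,n\}$, set $e_0\colonequals 1$, $e_m\colonequals n+1$, and define
\[r_E(j)\colonequals |\{t\in[m-1] : e_t\leq j\}|.\]
Then the condition $e_{m-i}\leq j<e_{m-i+1}$ is equivalent to $r_E(j)=m-i$, so by Remark \ref{rem:exp} we have
\[e_{x_{i,j}}(\beta_E)=1-[r_E(j)=m-i],\]
where $[\,\cdot\,]$ denotes the Iverson bracket. Therefore the desired identity reduces to showing
\[[r_A(j)=k]+[r_B(j)=k]=[r_{A\wedge B}(j)=k]+[r_{A\vee B}(j)=k]\qquad\text{for all }k\in\{0,\ldots,m-1\}.\]

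The key claim is that $\{r_A(j),r_B(j)\}=\{r_{A\wedge B}(j),r_{A\vee B}(j)\}$ as multisets, from which the above identity follows at once. To prove the claim, I would use two elementary ingredients. First, the identity $[a\leq j]+[b\leq j]=[\min(a,b)\leq j]+[\max(a,b)\leq j]$ (checked by cases), summed over $t\in[m-1]$ with $a=a_t$, $b=b_t$, yields
\[r_A(j)+r_B(j)=r_{A\wedge B}(j)+r_{A\vee B}(j).\]
Second, from $\min(a_t,b_t)\leq a_t,b_t\leq \max(a_t,b_t)$ one gets termwise inequalities $r_{A\wedge B}(j)\geq\max(r_A(j),r_B(j))$ and $r_{A\vee B}(j)\leq\min(r_A(j),r_B(j))$. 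Combining the equality of sums with these inequalities forces
\[r_{A\wedge B}(j)=\max(r_A(j),r_B(j)),\qquad r_{A\vee B}(j)=\min(r_A(j),r_B(j)),\]
proving the multiset claim and hence the lemma.

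This is essentially a bookkeeping argument with no real obstacle, once the correct reformulation via the rank function $r_E(j)$ is found. The only delicate point is handling the boundary conventions $a_0=1$ and $a_m=n+1$ uniformly so that the case $k=0$ (and $k=m-1$) is not treated separately; the rank-function formulation absorbs these cases cleanly.
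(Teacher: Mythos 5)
Your proof is correct and follows essentially the same route the paper takes: verify the identity exponent-by-exponent using Remark \ref{rem:exp}, which the paper declares ``straightforward'' and leaves to the reader. Your rank-function formulation $r_E(j)$ and the multiset identity $\{r_A(j),r_B(j)\}=\{r_{A\wedge B}(j),r_{A\vee B}(j)\}$ are a clean way to carry out the verification the paper elides.
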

\begin{proof}
It suffices to check that $e_{x_{i,j}}(\beta_A\cdot \beta_B)=e_{x_{i,j}}(\beta_{A\wedge B}\cdot \beta_{A\vee B})$ for each $(i,j)\in V$. This is straightforward by Remark \ref{rem:exp}. 
\end{proof}

Given subsets $A=\{a_1<\ldots <a_{m-1}\}$ and $B=\{b_1<\ldots <b_{m-1}\}$ of $\{2,\ldots ,n\}$, we say that $A\leq B$ if $a_i\leq b_i$ for all $i=1,\ldots ,m-1$. This induces the structure of a \textit{distributive lattice} on the collection of subsets of $\{2,\ldots ,n\}$ of cardinality $m-1$. As an immediate consequence of Lemma \ref{lem:straighten}, we get a more manageable description of the powers of $N$. 

\begin{cor}\label{c:Upowers}
For all $s\in\NN$, we have
$$N^s=(\prod_{k=1}^s\beta_{A_{k}}:A_1\leq \ldots \leq A_{s} \mbox{ are subsets of $\{2,\ldots ,n\}$ of cardinality $m-1$}).$$
\end{cor}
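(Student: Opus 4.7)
The inclusion $\supseteq$ is immediate from the definition: each product $\prod_{k=1}^s\beta_{A_k}$ indexed by a chain $A_1\leq\cdots\leq A_s$ is a product of $s$ elements of $N$, hence lies in $N^s$. For the nontrivial inclusion $\subseteq$, note that any generator of $N^s$ has the shape $\prod_{k=1}^s\beta_{A_k}$ with $A_1,\ldots,A_s$ arbitrary size-$(m-1)$ subsets of $\{2,\ldots,n\}$, in no prescribed order. The plan is to exhibit an explicit rewriting procedure that converts such a product into $\prod_{k=1}^s\beta_{B_k}$ for some chain $B_1\leq\cdots\leq B_s$ in the distributive lattice.

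The rewriting will use Lemma \ref{lem:straighten} as a straightening rule. Whenever there exist indices $k<l$ with $A_k\not\leq A_l$, the identity $\beta_{A_k}\beta_{A_l}=\beta_{A_k\wedge A_l}\beta_{A_k\vee A_l}$ lets me replace the pair $(A_k,A_l)$ by $(A_k\wedge A_l,\,A_k\vee A_l)$ at positions $k$ and $l$, leaving the overall product invariant. This makes the pair at $k,l$ comparable in the correct order, though possibly at the cost of disturbing the comparability of other pairs. I will iterate this operation, and at termination every pair $(k,l)$ with $k<l$ will satisfy $A_k\leq A_l$, delivering the desired chain.

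The only substantive point—and the main (mild) obstacle—is showing that this iterative procedure terminates. For this I will introduce the monovariant
\[
\Psi(A_1,\ldots,A_s)\;:=\;\sum_{k=1}^s k\cdot\sigma(A_k),\qquad \sigma(A):=\sum_{a\in A}a.
\]
From the identity $\sigma(A_k\wedge A_l)+\sigma(A_k\vee A_l)=\sigma(A_k)+\sigma(A_l)$, setting $\delta:=\sigma(A_k)-\sigma(A_k\wedge A_l)$, an elementary exchange at positions $k<l$ changes $\Psi$ by exactly $(l-k)\delta$. A direct coordinate-wise check gives $\delta\geq 0$, with equality iff $A_k\leq A_l$; so each exchange strictly increases the integer quantity $\Psi$ by at least $l-k\geq 1$. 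Since the multiset of entries at each coordinate position is preserved throughout the procedure, $\Psi$ is bounded above, and the process must terminate after finitely many steps. At termination no incomparable pair remains, so the resulting sequence forms the chain we wanted.
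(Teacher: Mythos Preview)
Your proof is correct and follows essentially the same approach as the paper, which simply records the result as ``an immediate consequence of Lemma \ref{lem:straighten}'' without further detail. You have supplied the explicit straightening/termination argument (via the monovariant $\Psi$) that the paper leaves to the reader; this is the standard way one proves such statements for products indexed by a distributive lattice.
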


\begin{rmk}\label{rem:key}
Let $A$ and $B$ be subsets of $\{2,\ldots ,n\}$ of cardinality $m-1$ with $A\leq B$. For any $(i,j)\in V$, we have:
 \begin{itemize}
    \item If $(i,j)$ is to the right of $D_B$, then it is also to the right of $D_A$.
    \item If $(i,j)$ is to the left of $D_A$, then it is also to the left of $D_B$.
    \end{itemize}
    Therefore if $A_1\leq \ldots \leq A_{\ell}$ are subsets of $\{2,\ldots ,n\}$ of cardinality $m-1$ and $\gamma=\prod_{k=1}^{\ell}\beta_{A_{k}}$, then for all $(i,j)\in V$, we have:
    \begin{align}\label{eq:key}
    e_{x_{i,j}}(\gamma) \ \ = & \ \ \max\{k\in [\ell]:(i,j) \mbox{ is to the right of } D_{A_k}\}+ \nonumber \\
    & \ell -\min\{k\in[\ell]:((i,j) \mbox{ is to the left of } D_{A_k}\}+1,
    \end{align}
where $\max\{k\in [\ell]:(i,j) \mbox{ is to the right of } D_{A_k}\}=0$ if $(i,j)$ is not to the right of $D_{A_k}$ for any $k\in[\ell]$ and $\min\{k\in[\ell]:((i,j) \mbox{ is to the left of } D_{A_k}\}=\ell+1$ if $(i,j)$ is not to the left of $D_{A_k}$ for any $k\in[\ell]$.    
\end{rmk}

\begin{lem}\label{lem:mostcases}
For any $r\in\NN$ and $A_1\leq \ldots \leq A_{2r+1}$ subsets of $\{2,\ldots ,n\}$, we have
$$\big(\prod_{k=1}^r\beta_{A_{2k}}\big)^2\big|\prod_{h=1}^{2r+1}\beta_{A_h}.$$ 
In particular, $N^s\subset (N^r)^{[2]}$ whenever $s\geq 2r+1$.
\end{lem}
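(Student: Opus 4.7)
The plan is to verify the divisibility variable by variable. Fix any $(i,j)\in V$. By Remark \ref{rem:exp} each exponent $e_{x_{i,j}}(\beta_{A_k})$ is either $0$ or $1$, so it suffices to control, in terms of the partition of $[2r+1]$ given by the relative position of $(i,j)$ with respect to the $D_{A_k}$'s, how many of the even indices land in $V\setminus D_{A_k}$.

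The monotonicity packaged in Remark \ref{rem:key} is what makes this tractable. Writing
\[
S_R \colonequals \{k\in[2r+1]:(i,j) \text{ is to the right of } D_{A_k}\}, \quad S_L \colonequals \{k\in[2r+1]:(i,j) \text{ is to the left of } D_{A_k}\},
\]
the remark shows $S_R$ is an initial segment $\{1,\ldots,a\}$ and $S_L$ is a final segment $\{b,\ldots,2r+1\}$ with $a<b$. Therefore
\[
\sum_{h=1}^{2r+1}e_{x_{i,j}}(\beta_{A_h})\;=\;a+(2r+2-b),
\]
while $\sum_{k=1}^{r}e_{x_{i,j}}(\beta_{A_{2k}})$ equals the number of even integers in $\{1,\ldots,a\}\cup\{b,\ldots,2r+1\}$.

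The core arithmetic step is now almost trivial: an initial segment $\{1,\ldots,a\}$ contains $\lfloor a/2\rfloor \le a/2$ even integers, and because $[2r+1]$ ends at the odd number $2r+1$, any final segment $\{b,\ldots,2r+1\}$ contains at most $(2r+2-b)/2$ even integers (with equality precisely when $b$ is even). Summing and doubling yields
\[
2\sum_{k=1}^{r}e_{x_{i,j}}(\beta_{A_{2k}})\;\le\; a+(2r+2-b)\;=\;\sum_{h=1}^{2r+1}e_{x_{i,j}}(\beta_{A_h}),
\]
which, holding for every $(i,j)\in V$, proves the divisibility asserted in the first part.

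For the ``in particular'' statement, I would invoke Corollary \ref{c:Upowers} to reduce $N^s\subset (N^r)^{[2]}$ to the case of a monomial generator $\prod_{h=1}^{s}\beta_{A_h}$ for a chain $A_1\le\cdots\le A_s$ with $s\ge 2r+1$. The subchain $A_2\le A_4\le\cdots\le A_{2r}$ produces, again by Corollary \ref{c:Upowers}, an element $\prod_{k=1}^{r}\beta_{A_{2k}}\in N^r$, whose square divides $\prod_{h=1}^{2r+1}\beta_{A_h}$ by the first part and hence divides $\prod_{h=1}^{s}\beta_{A_h}$. This places the generator in $(N^r)^{[2]}$. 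I do not anticipate a real obstacle here; the only delicate point is the parity bookkeeping, and the fact that $[2r+1]$ ends in an odd number is precisely what forces the correct inequality on the final-segment count.
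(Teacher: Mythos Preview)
Your proof is correct and follows essentially the same route as the paper's: both verify the divisibility variable by variable, using the monotonicity from Remark~\ref{rem:key} to reduce the exponent comparison to an elementary inequality. The only difference is cosmetic---the paper phrases the count via $\max_e,\min_e,\max_o,\min_o$ and uses $2\max_e\le\max_o$, $2\min_e\ge\min_o$, whereas you reparametrize the same quantities as the number of even integers in an initial segment $\{1,\ldots,a\}$ and a final segment $\{b,\ldots,2r+1\}$; unwinding the definitions one sees $\max_e=\lfloor a/2\rfloor$, $\min_e=\lceil b/2\rceil$, $\max_o=a$, $\min_o=b$, so the two computations are literally identical.
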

\begin{proof}
    Let $(i,j)\in V$. By Equation \ref{eq:key}, we have
    \begin{align*}
   e_{x_{i,j}}\big(\prod_{k=1}^r\beta_{A_{2k}}\big) \ = & \ \overbrace{\max\{k\in[r]:(i,j) \mbox{ is to the right of } D_{A_{2k}}\}}^{\mathrm{max}_e}  \ +\\
   & r -\underbrace{\min\{k\in[r]:((i,j) \mbox{ is to the left of } D_{A_{2k}}\}}_{\mathrm{min}_e}+1, \\
 e_{x_{i,j}}\big(\prod_{h=1}^{2r+1}\beta_{A_{h}}\big) \ = & \ \overbrace{\max\{h\in[2r+1]:((i,j) \mbox{ is to the right of } D_{A_{h}}\}}^{\mathrm{max}_o} \ + \\
    & 2r+1 -\underbrace{\min\{h\in[2r+1]:(i,j) \mbox{ is to the left of } D_{A_{h}}\}}_{\mathrm{min}_o}+1.
   \end{align*}
 
    Notice that $2\cdot \mathrm{min}_e\geq \mathrm{min}_o$ and $2\cdot \mathrm{max}_e\leq \mathrm{max}_o$, hence we infer from the above equations that $2\cdot e_{x_{i,j}}\big(\prod_{k=1}^r\beta_{A_{2k}}\big)\leq e_{x_{i,j}}\big(\prod_{k=1}^{2r+1}\beta_{A_{k}}\big)$
    for all $(i,j)\in V$. This proves the first assertion. The second assertion follows at once from Corollary \ref{c:Upowers}.
\end{proof}

We are now ready to prove the key technical result of this section.

\begin{thm}\label{thm:main}
    We have $\ini(J)^{(\ell)}=\ini(J)^{\ell}$ for all $\ell\in\mathbb{N}$.
    \end{thm}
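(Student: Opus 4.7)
The plan is to invoke the Monta\~no--N\'u\~nez-Betancourt criterion (Theorem \ref{t:useful}). Since $\ini(J)$ is a squarefree monomial ideal by Proposition \ref{p:radical}, we reduce to showing that for every $r\in\NN$, the product $\nu$ of all variables of $S$ lies in the colon $(\ini(J)^{r+1})^{[2]}:\ini(J)^{2r+1}$. Equivalently, for any product $g=g_1\cdots g_{2r+1}$ of $2r+1$ generators of $\ini(J)$ (from the list in Theorem \ref{thm:in(J)}), we must exhibit a monomial $h\in\ini(J)^{r+1}$ with $h^2\mid\nu g$.

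Let $p$ be the number of trivial generators $Y_{j,j}\alpha_j$ appearing in $g$ and $q=2r+1-p$ the number of nontrivial ones $(\prod_j Y_{j,j})\beta_{A_k}$; let $c_j$ denote the multiplicity of the trivial generator of type $j$. By the straightening law (Lemma \ref{lem:straighten} and Corollary \ref{c:Upowers}) we may assume $A_1\leq\cdots\leq A_q$ is a chain. We look for $h$ of the analogous form $\prod_j (Y_{j,j}\alpha_j)^{d_j}\cdot(\prod_j Y_{j,j})^{v}\prod_k \beta_{B_k}$ with $\sum_j d_j+v=r+1$ and $B_1\leq\cdots\leq B_v$ a chain. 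The fact that the anti-diagonals $\alpha_j$ partition $V$ and are pairwise coprime reduces $h^2\mid\nu g$ to two families of inequalities: one on each $Y_{j,j}$, and one on each $x_{i,j}\in V$ bounding $2d_{j_0(i,j)}+2b_{i,j}$ by $c_{j_0(i,j)}+1+(p_{i,j}+s_{i,j})$, where $j_0(i,j)=i+j-m$, $b_{i,j}=\sum_k e_{x_{i,j}}(\beta_{B_k})$, and $p_{i,j},q_{i,j},s_{i,j}$ are the lengths of the runs in the pattern $(1^{p_{i,j}}0^{q_{i,j}}1^{s_{i,j}})$ recorded in Remark \ref{rem:exp}.

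The strategy to satisfy these inequalities combines two ingredients. For the nontrivial part of $h$, Lemma \ref{lem:mostcases} (or a subchain-adapted variant thereof) yields a subchain of the $A_k$'s whose square accounts for most of the nontrivial portion of $g$; one then adds one more carefully chosen factor, absorbing the exponent boost supplied by $\nu$. For the trivial part, pairwise coprimality lets us take $d_j\leq\lceil c_j/2\rceil$, consuming $\sum_j\lceil c_j/2\rceil\geq\lceil p/2\rceil$ slots. A parity analysis via Remark \ref{rem:key} shows that the ``surplus'' exponent at $(i,j)\in V$ vanishes precisely in the \emph{problematic} case where $p_{i,j}$ is even and $q_{i,j}$ is odd; for such $(i,j)$ the extra factor in $h$ must either have $(i,j)\in D_B$ (if nontrivial) or $(i,j)\notin\alpha_{j_0}$ (if trivial).

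The main obstacle, and the source of the careful reductions alluded to in the introduction, is that for some chains the problematic set is neither contained in a single staircase $D_B$ nor avoided by a single anti-diagonal $\alpha_{j_0}$, mirroring the failure $N^{(\ell)}\neq N^\ell$ witnessed in Example \ref{exU}. To overcome this, I would: (i) replace the even-indexed subchain of Lemma \ref{lem:mostcases} by a subchain adapted to the problematic pattern of the fixed chain; and (ii) when $p>0$, exploit the freedom in distributing the $d_j$'s across anti-diagonals to create additional exponent slack at problematic $(i,j)$'s---this is where the symbolic-ordinary equality for $\ini(J)$ benefits from the genuine interaction between its trivial and nontrivial generators, an interaction that is absent for $N$ alone. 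After these modifications the residual divisibility is checked coordinate-by-coordinate via Remarks \ref{rem:exp} and \ref{rem:key}, completing the proof.
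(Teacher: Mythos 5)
Your overall framework matches the paper's: invoke Theorem \ref{t:useful}, separate generators of $\ini(J)$ into the $a$ trivial factors $Y_{j,j}\alpha_j$ and the $b$ nontrivial factors $\mu\beta_{A_k}$ (with the $A_k$ arranged as a chain via Lemma \ref{lem:straighten} and Corollary \ref{c:Upowers}), use Lemma \ref{lem:mostcases} to harvest an even-index subchain, and recognize that the case $a=0$ is where the difficulty lives, consistently with Example \ref{exU}. You also correctly anticipate the final reduction from arbitrary products to products of distinct generators via a parity argument. So the skeleton is right.

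The genuine gap is in your handling of the hard $a=0$ case. You propose to ``replace the even-indexed subchain of Lemma \ref{lem:mostcases} by a subchain adapted to the problematic pattern of the fixed chain,'' but no subchain of $A_1\le\cdots\le A_{2r+1}$ will work: the obstruction in the problematic situation is precisely that some $(i,j)$ lies in $D_{A_k}$ for an \emph{interior} range of $k$ while lying outside $D_{A_1}$ and $D_{A_{2r+1}}$, so the exponent of $x_{i,j}$ supplied by $\gamma$ is short by two units, and any $\beta_{A_k}$ from the chain that contributes at $(i,j)$ overspends elsewhere. What the paper actually does is leave the chain and manufacture a \emph{new} set $C$ by interpolating: $c_r=b_r$ for $r<\omega$, $c_r=i+j+r-m$ for $\omega\le r\le\Omega$, $c_r=a_r$ for $r>\Omega$, so that $D_C$ runs along the anti-diagonal through $(i,j)$ in the middle block and agrees with $D_{A_{2r+1}}$ below and $D_{A_1}$ above. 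It then exploits the crucial observation that the anti-diagonal $\alpha_{i+j-m}$ divides $\beta_{A_1}\beta_{A_{2r+1}}$, so the extra factor of $\ini(J)^{r+1}$ can be taken to be the \emph{trivial} generator $Y_{i+j-m,i+j-m}\alpha_{i+j-m}$ together with $\mu\beta_C$ and $r-1$ odd-indexed $\mu\beta_{A_{2h+1}}$. Your point (ii) about distributing the $d_j$'s among anti-diagonals is directionally sound but is really what handles the cases $a\ge1$; it does not rescue $a=0$, where the anti-diagonal must instead be \emph{extracted} from $\beta_{A_1}\beta_{A_{2r+1}}$. Without the construction of $C$ and the observation $\alpha_{i+j-m}\mid\beta_{A_1}\beta_{A_{2r+1}}$, the coordinate-by-coordinate divisibility check you sketch at the end will fail at the problematic $(i,j)$'s.
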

\begin{proof}
Let $\nu$ be the product of all the variables of $S$. By Theorem \ref{t:useful}, our aim is to prove $$\nu\in(\ini(J)^{r+1})^{[2]}:\ini(J)^{2r+1} \ \forall \ r\in\mathbb{N}.$$ 
As we will see, it is simple to check that it suffices to prove this for each set of $2r+1$ distinct generators of $\ini(J)$, so for now we focus on this case.

Set $\mu:=Y_{1,1}Y_{2,2}\cdots Y_{n-m+1,n-m+1}\in S$; pick natural numbers $a$ and $b$ such that $a+b=2r+1$,
 $1\leq i_1<i_2<\ldots <i_a\leq n-m+1$ and subsets $A_1\leq A_2\leq \ldots \leq A_b$ of $\{2,\ldots ,n\}$ each of cardinality $m-1$.
We claim that
\[\gamma:=\nu \cdot \prod_{k=1}^aY_{i_k,i_k}\alpha_{i_k}\cdot \prod_{k=1}^b\mu\beta_{A_k}\in \big(\ini(J)^{r+1}\big)^{[2]}.\]
    
 \medskip
 
\noindent {\bf If $a\geq 2$:} Notice that, if $\delta:=\prod_{k=1}^aY_{i_k,i_k}\alpha_{i_k}\cdot \prod_{k=1}^{\lfloor\frac{b-1}{2}\rfloor}\mu\beta_{A_{2k}}$, then $\delta^2|\gamma$. Indeed, the square of $\prod_{k=1}^aY_{i_k,i_k}\alpha_{i_k}$ clearly divides $\nu \cdot \prod_{k=1}^aY_{i_k,i_k}\alpha_{i_k}$, while $\big(\prod_{k=1}^{\lfloor\frac{b-1}{2}\rfloor}\mu\beta_{A_{2k}}\big)^2$ divides $\prod_{k=1}^b\mu\beta_{A_k}$ by Lemma \ref{lem:mostcases}. So it remains to show that $\delta\in\ini(J)^{r+1}$. This follows immediately from the inequality
\[a+\left\lfloor\frac{b-1}{2}\right\rfloor\geq r+1,\]
which is a routine verification.    

\smallskip

\noindent {\bf If $a=1$:} In this case, we have $b=2r$. Define \[A_{b+1}\colonequals\{i_1+1,i_1+2,\ldots , i_1+m-1\}\subset \{2,\ldots ,n\},\] and note that $\beta_{A_{b+1}}$ divides $\nu/(Y_{i_1,i_1}\alpha_{i_1})$. By Lemma \ref{lem:mostcases}, $\prod_{k=1}^{b+1}\beta_{A_k}\in \big(N^r\big)^{[2]}$, so we have $\frac{\nu}{Y_{i_1,i_1}\alpha_{i_1}}\cdot \prod_{k=1}^{b}\beta_{A_k}$ lies in $\big(N^r\big)^{[2]}$ as well, thus
    $$
    \frac{\nu}{Y_{i_1,i_1}\alpha_{i_1}}\cdot \prod_{k=1}^{b}\mu\beta_{A_k}\in \big(\mu^r N^r\big)^{[2]}\subset \big(\ini(J)^r\big)^{[2]}.
    $$
 Hence, $\gamma=\nu \cdot Y_{i_1,i_1}\alpha_{i_1}\cdot \prod_{k=1}^b\mu\beta_{A_k}\in (Y_{i_1,i_1}\alpha_{i_1})^2\big(\ini(J)^r\big)^{[2]}\subset \big(\ini(J)^{r+1}\big)^{[2]}$. 

\smallskip

\noindent {\bf If $a=0$:} This case turns out to be the most delicate to deal with. Note that we have $b=2r+1$. Consider the monomial $\delta:=\prod_{k=1}^{r+1}\mu\beta_{A_{2k-1}}\in\ini(J)^{r+1}$. 
 \begin{itemize}
 \item[(i)] Suppose that for all $(i,j)\in V$, 
 $$(i,j)\notin D_{A_1}\cup D_{A_{2r+1}}\implies (i,j)\notin D_{A_k} \ \forall \ k\in [2r+1].$$ 
 For $(i,j)\in V$, if $e_{x_{i,j}}(\delta)>0$, then either $(i,j)$ is to the right of $A_1$ (and in this case $e_{x_{i,j}}(\delta)=\max\{k\in[r+1]:(i,j)\notin A_{2k-1}\}$ while $e_{x_{i,j}}(\gamma)=\max\{h\in[2r+1]:(i,j)\notin A_h\}+1$), or $(i,j)$ is to the left of $A_{2r+1}$ (and in this case $e_{x_{i,j}}(\delta)=r+1-\min\{k\in[r+1]:(i,j)\notin A_{2k-1}\}+1$ while $e_{x_{i,j}}(\gamma)=2r+1-\min\{h\in[2r+1]:(i,j)\notin A_h\}+1+1$).  In both cases, $2e_{x_{i,j}}(\delta)\leq e_{x_{i,j}}(\gamma)$, and so $\delta^2|\gamma$.
 
 \item[(ii)] Suppose that there exists $(i,j)\in V$ and $k\in\{2,\ldots ,2r\}$ such that 
 $$(i,j)\in D_{A_k}\setminus (D_{A_1}\cup D_{A_{2r+1}}).$$ 
(The reader may go over the following construction while keeping an eye on Figure \ref{Fig4}). Notice that we must have $1<i<m$ and $m-i+1<j<n-i+1$. Furthermore, if $p+q=i+j$, then $(p,q)$ is to the right of $D_{A_1}$ whenever $p\geq i$ and $(p,q)$ is to the left of $D_{A_{2r+1}}$ whenever $p\leq i$. So $\alpha_{i+j-m}|\beta_{A_1}\beta_{A_{2r+1}}$.
 Now write $A_1=\{a_1<\ldots <a_{m-1}\}$ and $A_{2r+1}=\{b_1<\ldots <b_{m-1}\}$. Let 
 \begin{align*}
 \omega \colonequals\min\{s\in [m-1]:m-s+b_{s+1}>i+j\}, \\
 \Omega \colonequals\max\{s\in [m-1]:m-s+a_{s}\leq i+j\}.
 \end{align*} 
 Notice that $\omega<m-i$ and $\Omega>m-i$, and define 
 \[C\colonequals\{c_1,\ldots ,c_{m-1}\} \ \mbox{ where }c_r=\begin{cases}b_r & \mbox{ if } \ r<\omega \\
 i+j+r-m & \mbox{ if } \omega\leq r\leq \Omega \\
 a_r &\mbox{ if } \ r>\Omega.
 \end{cases}\]
Note that $(p,i+j-p)\notin D_{A_k}$ for all $k\in[2r+1]$ if $m-p<\omega$ (in which case $(p,i+j-p)$ is on the right of $D_{2r+1}$) or $m-p>\Omega$ (in which case $(p,i+j-p)$ is on the left of $D_{1}$). Hence, if $\epsilon:=\prod_{m-p<\omega}x_{p,i+j-p}\cdot \prod_{m-p>\Omega}x_{p,i+j-p}$, we have that $\epsilon^{2r+2}|\gamma$ and $(\beta_C\cdot \alpha_{i+j-m})^2|\beta_{A_1}\cdot \beta_{A_{2r+1}}\cdot \nu\cdot \epsilon$. By Lemma \ref{lem:mostcases}, $(\prod_{h=1}^{r-1}\beta_{A_{2h+1}})^2|\prod_{l=2}^{2r}\beta_{A_l}$, and since $\epsilon$ divides the squarefree monomial $\beta_{A_l}$ for all $l\in [2r+1]$, the square of $\prod_{h=1}^{r-1}\beta_{A_{2h+1}}$ divides $\frac{\prod_{l=2}^{2r}\beta_{A_l}}{\epsilon}$, so $(\beta_C\cdot \alpha_{i+j-m}\cdot \prod_{h=1}^{r-1}\beta_{A_{2h+1}})^2|\gamma$.

\medskip
 
\begin{minipage}{0.30\textwidth}  
\begin{center}
\includegraphics[scale=0.20]{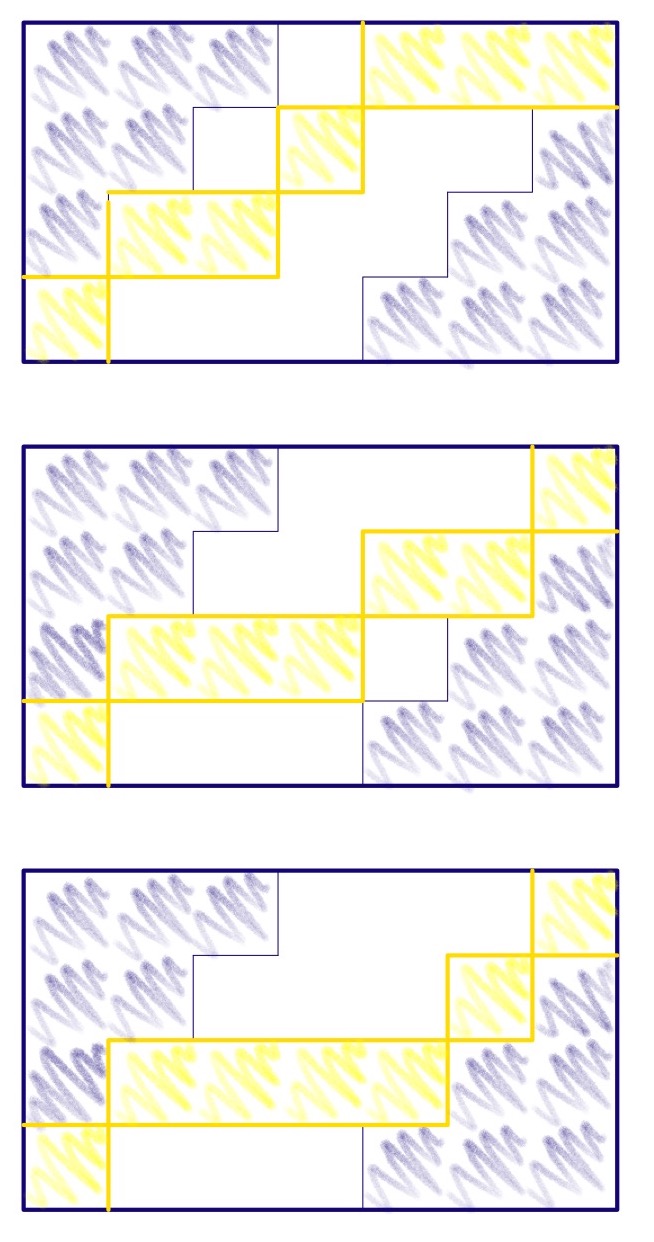} 
\end{center}
\end{minipage}\hfill
\begin{minipage}{0.60\textwidth}  
\begin{center}
\includegraphics[scale=0.20]{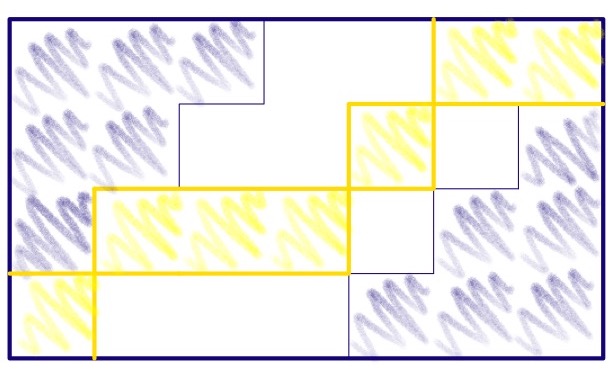} 
\captionof{figure}{In this situation where $a=0$, we have $m=4$, $n=7$, and $r=1$ (so $b=3$). On the left, we have the illustrations of $A_1=\{2,4,5\}$, $A_2=\{2,5,7\}$, and $A_3=\{2,6,7\}$. This situation falls in the case (ii) since $(2,5)\in D_{A_2}\setminus (D_{A_1}\cup D_{A_3})$. Note that $\omega=1$ and $\Omega=3$, so $C=\{2,5,6\}$. The illustration above depicts $D_C$.}
\label{Fig4}
\end{center}
\end{minipage}  
\end{itemize}
 
\noindent Now the theorem follows from Theorem \ref{t:useful}. Using Corollary \ref{c:Upowers}, a generator $\xi$ of $\ini(J)^{2r+1}$ has the form
 $$
  \xi=\prod_{k=1}^{n-m+1}(Y_{i_k,i_k}\alpha_{i_k})^{r_k}\cdot \prod_{j=1}^b(\mu\beta_{A_j})^{s_j},
 $$
 where $r_k$ and $s_j$ are natural numbers whose sum  is $2r+1$, and $A_1<A_2<\ldots<A_b$ are subsets of $\{2,\ldots ,n\}$ of cardinality $m-1$. Consider the sets 
 \[P\colonequals\{k\in[n-m+1]:r_k\mbox{ is odd}\} \quad \text{and} \quad Q\colonequals \{j\in [b]:s_j\mbox{ is odd}\},\] and notice that  $|P|+|Q|$ is odd, say $|P|+|Q|=2s+1$. Consider the monomial
 $$
 \xi'=\prod_{k\in P}Y_{i_k,i_k}\alpha_{i_k}\cdot \prod_{j\in Q}\mu\beta_{A_j}.
 $$
 It is clear that $\xi/\xi'\in \ini(J)^{2(r-s)}$ is the square of a monomial in the ideal $\ini(J)^{r-s}$. On the other hand, by what we just proved, $\nu\cdot \xi'\in (\ini(J)^{s+1})^{[2]}$, so \[\nu\cdot \xi=\nu \cdot \xi'\cdot \xi/\xi'\in (\ini(J)^{s+1})^{[2]}(\ini(J)^{r-s})^{[2]}=     (\ini(J)^{r+1})^{[2]}.\]
 This concludes the proof.
\end{proof}

The main result of this section follows immediately from Theorem \ref{thm:main} in view of Proposition \ref{prop:crucialmethod}.

\begin{cor}\label{cor:OrdinaryEqualsSymbolic}
 The symbolic and ordinary powers of the generic link $J$ agree, i.e.,
 \[J^{(\ell)}=J^\ell \quad \text{for all} \quad \ell\in \mathbb{N}.\]
\end{cor}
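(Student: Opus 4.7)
The plan is to derive Corollary \ref{cor:OrdinaryEqualsSymbolic} as an immediate application of Theorem \ref{thm:main} via the general transfer mechanism of Proposition \ref{prop:crucialmethod}. The underlying idea is that once we know the symbolic and ordinary powers agree for the initial ideal $\ini(J)$, and once we know this initial ideal is squarefree, we can ``lift'' this equality back to $J$ itself through a squeeze between the chain of inclusions
\[\ini(J)^\ell \subseteq \ini(J^\ell) \subseteq \ini(J^{(\ell)}) \subseteq \ini(J)^{(\ell)},\]
where the rightmost containment is Sullivant's result for squarefree initial ideals. Equality at the two extremes forces equality throughout.

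Concretely, I would proceed as follows. First, I would verify the two hypotheses of Proposition \ref{prop:crucialmethod} for the ideal $U = J$ with respect to the term order $<_1$. The squarefreeness of $\ini_{<_1}(J)$ is obtained by invoking Proposition \ref{p:radical} once we know that $(S,J)$ has property $\mathbf{P}$; this is guaranteed by Lemma \ref{lemma:Pinherited}, since $(R,I)$ has property $\mathbf{P}$ for the ideal of maximal minors (as noted in the paper, cf. \cite[Proposition 5.1]{Pandey}). The second hypothesis---the equality $\ini_{<_1}(J)^{(\ell)} = \ini_{<_1}(J)^{\ell}$ for all $\ell$---is exactly the content of Theorem \ref{thm:main}.

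With both hypotheses in place, Proposition \ref{prop:crucialmethod}(1) applies directly and yields $J^{(\ell)} = J^{\ell}$ for every $\ell \in \mathbb{N}$, which is precisely the statement of Corollary \ref{cor:OrdinaryEqualsSymbolic}. There is no substantive obstacle at this final stage, since the genuine difficulty has already been overcome in the combinatorial argument of Theorem \ref{thm:main}, where the delicate case analysis (especially the case $a=0$ involving the construction of the auxiliary set $C$) was required to verify the Monta\~no--N\'u\~nez-Betancourt criterion for $\ini(J)$. The current corollary is essentially the ``payoff'' step: it leverages the squarefreeness of the initial ideal together with the general principle that Gr\"obner degeneration preserves the equality of symbolic and ordinary powers in this setting.
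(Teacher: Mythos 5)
Your proposal is correct and takes exactly the paper's route: the paper likewise derives Corollary~\ref{cor:OrdinaryEqualsSymbolic} immediately from Theorem~\ref{thm:main} together with Proposition~\ref{prop:crucialmethod}(1), with the squarefreeness of $\ini_{<_1}(J)$ coming from Lemma~\ref{lemma:Pinherited} and Proposition~\ref{p:radical}. Your writeup simply spells out the hypothesis checks that the paper leaves implicit.
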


Note that the generic link of a homogeneous ideal generated in degree $d$ is not a \textit{minimal link}\footnote{The link $J=\fraka : I$ is called minimal if the generators of the ideal $\fraka$ can be extended to a minimal generating set of $I$. Minimal linkage is generally better behaved and preferable over arbitrary linkage. For example, the minimal link of an almost complete intersection ideal defines a Gorenstein ring. For other favorable properties of minimal links, see \cite[Remark 2.7]{Huneke-Ulrich87}.} since the regular sequence defining the link is generated in degree $d+1$. We recall that the \textit{universal link}\footnote{The point of defining the universal link is that it is \textit{essentially a deformation} of any ideal linked to $I$ and thus controls the algebro-geometric properties of the linked ideals; see \cite{HunekeUlrich-Duke}.} of $I$ is the localization of the generic link $J\subset S$ at the prime ideal $(\underline{x_{i,j}})$ of $S$. Clearly, the universal link of a homogeneous ideal of a standard graded polynomial ring generated in the same degree is a minimal link. Since $J\subset (\underline{x_{i,j}})$, we get  

\begin{cor}\label{cor:UniversalLink}
The symbolic and ordinary powers of the universal link $J_{(\underline{x_{i,j}})}$ agree, i.e. $(J_{(\underline{x_{i,j}})})^{(\ell)}=(J_{(\underline{x_{i,j}})})^\ell$ for all $\ell \in \mathbb{N}$.
\end{cor}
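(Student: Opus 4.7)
The plan is to deduce the corollary as an immediate consequence of Corollary \ref{cor:OrdinaryEqualsSymbolic} via localization. Set $P \colonequals (\underline{x_{i,j}})S$, so that the universal link in question is $J_P$. Since $J \subset P$ (as noted just before the statement), $J_P$ is a proper ideal of the local ring $S_P$, so both its symbolic and ordinary powers are well-defined proper ideals of $S_P$.

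First, I would invoke the standard fact that ordinary powers commute with localization: $(J^\ell)_P = (J_P)^\ell$ for every $\ell \in \NN$. Next, I would verify the analogous statement for symbolic powers,
\[(J^{(\ell)})_P = (J_P)^{(\ell)},\]
which is really the only piece of content beyond Corollary \ref{cor:OrdinaryEqualsSymbolic}. To see this, recall that $J$ is unmixed (being a link), so $\mathrm{Ass}(J)$ consists of minimal primes all of the same height and
\[J^{(\ell)} = \bigcap_{\mathfrak{p}\in\mathrm{Ass}(J)}\bigl(\mathfrak{p}^{\ell} S_\mathfrak{p}\cap S\bigr).\]
Under localization at $P$, each primary component with $\mathfrak{p}\subseteq P$ becomes $(\mathfrak{p} S_P)^\ell (S_P)_{\mathfrak{p} S_P}\cap S_P$, while those with $\mathfrak{p}\not\subseteq P$ contribute the unit ideal of $S_P$; on the other hand, the associated primes of $J_P$ are precisely the $\mathfrak{p} S_P$ for $\mathfrak{p}\in\mathrm{Ass}(J)$ contained in $P$. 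Since localization passes through finite intersections, the two sides of the displayed equation agree.

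Combining these two commutations with Corollary \ref{cor:OrdinaryEqualsSymbolic}, I would conclude
\[(J_P)^{(\ell)} = (J^{(\ell)})_P = (J^\ell)_P = (J_P)^\ell,\]
which is the desired equality. There is no genuine obstacle in this argument: the substantial work has already been carried out in establishing Corollary \ref{cor:OrdinaryEqualsSymbolic}, and what remains is only routine bookkeeping of associated primes under localization.
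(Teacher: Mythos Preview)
Your proposal is correct and follows exactly the route the paper takes: the corollary is recorded there as an immediate consequence of Corollary \ref{cor:OrdinaryEqualsSymbolic} together with the observation $J\subset(\underline{x_{i,j}})$, with no further argument given. Your additional bookkeeping on how symbolic powers behave under localization simply fills in the routine details that the paper leaves implicit.
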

While Corollary \ref{cor:UniversalLink} is weaker than Corollary \ref{cor:OrdinaryEqualsSymbolic}, we do not know how to prove it directly because the indeterminates $Y_{i.j}$ are units in the universal link and thus do not contribute to the term order!

We end this section by noting that, along the way, we characterized precisely when the symbolic and ordinary powers of the ideal of the nontrivial generators of $\ini(J)$ are equal.

\begin{cor}\label{cor:NontrivialGenerators}
The following are equivalent:
\begin{enumerate}[\quad\rm(1)]
\item $N^{(\ell)}=N^{\ell}$ for all $\ell\in\NN$.
\item $N^{(2)}=N^2$.
\item $m\leq 2$ or $m= n-1$.
\end{enumerate}
\end{cor}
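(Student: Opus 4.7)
The implication $(1)\Rightarrow(2)$ is tautological. For $(2)\Rightarrow(3)$, I argue contrapositively: the failure of both alternatives in $(3)$ amounts to $m>2$ and $n>m+1$, which is precisely the setting of Example \ref{exU}, where a squarefree monomial is produced in $N^{(2)}\setminus N^{2}$.

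The substantive direction is $(3)\Rightarrow(1)$. My plan is to rerun the criterion of Monta\~no and N\'u\~nez-Betancourt (Theorem \ref{t:useful}) directly for $N$, mimicking the $a=0$ portion of the proof of Theorem \ref{thm:main}. Let $\nu$ denote the product of the variables that appear in $N$. By Corollary \ref{c:Upowers}, it suffices to verify that for every $r\in\NN$ and every chain $A_1\leq\cdots\leq A_{2r+1}$ of $(m-1)$-subsets of $\{2,\ldots,n\}$, the monomial
\[
\delta \;:=\; \prod_{k=1}^{r+1}\beta_{A_{2k-1}} \;\in\; N^{r+1}
\]
satisfies $\delta^{2}\mid \nu\cdot\prod_{h=1}^{2r+1}\beta_{A_h}$.

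Fix $(i,j)\in V$. Since each $a^{(h)}_s$ is weakly increasing in $h$, the set $\{h\in[2r+1] : (i,j)\in D_{A_h}\}$ is a contiguous (possibly empty) interval of $[2r+1]$, so its complement has the form $[1,p]\cup[q,2r+1]$. A direct exponent count reduces the required divisibility at $(i,j)$ to the elementary inequality
\[
2\lceil p/2\rceil + 2\lceil (2r+2-q)/2\rceil \;\leq\; p + (2r+2-q) + 1,
\]
which holds unless $p\geq 1$, $q\leq 2r+1$, and both $p$ and $2r+2-q$ are odd. This exceptional configuration entails $(i,j)\in D_{A_k}\setminus(D_{A_1}\cup D_{A_{2r+1}})$ for some middle index $k\in\{2,\ldots,2r\}$, i.e., the scenario of case (ii) in the proof of Theorem \ref{thm:main}, where it was shown to force $1<i<m$ and $m-i+1<j<n-i+1$. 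Under $(3)$ one of these ranges is empty---$m\leq 2$ kills the range for $i$, while $m\geq n-1$ kills the range for $j$---so the exceptional configuration never occurs, and the divisibility holds at every $(i,j)\in V$.

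The main obstacle is the clean exponent bookkeeping used in the reduction above; conceptually, the key observation is that, once the $\alpha$-generators of $\ini(J)$ are absent, excluding case (ii) alone already suffices and no analogue of the ad hoc subset $C$ constructed in case (ii) of Theorem \ref{thm:main} is required.
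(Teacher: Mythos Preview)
Your proof is correct and follows essentially the same route as the paper: both invoke Theorem~\ref{t:useful}, reduce via Corollary~\ref{c:Upowers} to chains $A_1\leq\cdots\leq A_{2r+1}$, take $\delta=\prod_{k=1}^{r+1}\beta_{A_{2k-1}}$, and argue that $\delta^2\mid\nu\cdot\prod_h\beta_{A_h}$ because hypothesis~(3) rules out the configuration of case~(ii) in Theorem~\ref{thm:main}. The only difference is cosmetic: where the paper simply cites ``situation~(i) of the case $a=0$'', you carry out the exponent bookkeeping explicitly via the interval structure of $\{h:(i,j)\in D_{A_h}\}$ and the parity inequality $2\lceil p/2\rceil+2\lceil(2r+2-q)/2\rceil\leq p+(2r+2-q)+1$, which makes the argument more self-contained.
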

\begin{proof}
$(1)\implies (2)$ is obvious and $(2)\implies (3)$ was observed in Example \ref{exU}. For $(3)\implies (1)$, by Theorem \ref{t:useful}, using Corollary \ref{c:Upowers}, it is enough to show that 
    $$
    \gamma:=\prod_{(i,j)\in [m]\times [n]}x_{i,j} \cdot \prod_{k=1}^{2r+1}\beta_{A_k}\in \big(N^{r+1}\big)^{[2]}
    $$
whenever $r\in\NN$ and $A_1\leq A_2\leq \ldots \leq A_{2r+1}$. Set \[\delta:=\prod_{k=1}^{r+1}\beta_{A_{2k-1}}\in N^{r+1},\] then $\delta^2|\gamma$ as in situation (i) of the case ``$a=0$" of the proof of Theorem \ref{thm:main}. This is due to the fact that if $m\leq 2$ or $m= n-1$, then situation (ii) in the case ``$a=0$" cannot occur.
\end{proof}

\section{Singularities of the blowup algebras of the generic link}\label{Section5}

Recall that the Rees algebra and the associated graded ring of an ideal $U$ of a commutative ring are defined as
\[\mathcal{R}(U)\colonequals \bigoplus_{\ell \geq 0}U^\ell \quad \text{and} \quad \gr(U) \colonequals \bigoplus_{\ell \geq 0}U^\ell/U^{\ell+1}.\]

We remind the reader that we stick to the notation in Setup \ref{setup}. In particular, unless specified otherwise, $I$ denotes the ideal of maximal minors of a generic matrix $X$ in the polynomial ring $R=K[X]$ over a field $K$ and $J$ is its generic link in a polynomial extension $S$ of $R$. Proposition \ref{prop:crucialmethod} and Theorem \ref{thm:main} have some immediate consequences on the blowup algebras of the generic link: 

\begin{cor}\label{cor:Rees}
The Rees algebra $\mathcal{R}(J)$ is $F$-rational in positive characteristic and has rational singularities in characteristic 0. In particular, both $\mathcal{R}(J)$ and $\gr(J)$  are Cohen--Macaulay.
\end{cor}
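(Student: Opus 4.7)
The plan is to treat Corollary \ref{cor:Rees} as a direct harvest of Proposition \ref{prop:crucialmethod}(3), once the two required hypotheses are verified for $U=J$ with respect to the term order $<_1$. First, Lemma \ref{lemma:Pinherited} guarantees that the pair $(S,J)$ has property $\mathbf{P}$, so $\ini_{<_1}(J)$ is squarefree by Proposition \ref{p:radical}. Second, Theorem \ref{thm:main} supplies the equality $\ini_{<_1}(J)^{(\ell)}=\ini_{<_1}(J)^{\ell}$ for every $\ell \in \mathbb{N}$. Plugging these into Proposition \ref{prop:crucialmethod}(3) immediately yields that $\mathcal{R}(J)$ is $F$-rational in positive characteristic and has rational singularities in characteristic zero.

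The Cohen--Macaulayness of $\mathcal{R}(J)$ is then automatic, as both $F$-rationality and rational singularities imply Cohen--Macaulay. For $\gr(J)$, my plan is to run the Gr\"obner degeneration one layer deeper. Proposition \ref{prop:crucialmethod}(2) gives $\ini_{<_1}(J^{\ell})=\ini_{<_1}(J)^{\ell}$ for all $\ell$, which upgrades the specialization of $J$ to $\ini_{<_1}(J)$ to a flat specialization, graded piece by graded piece, of $\gr(J)$ to $\gr(\ini_{<_1}(J))$. Since Cohen--Macaulayness lifts along such specializations, it suffices to show that $\gr(\ini_{<_1}(J))$ is Cohen--Macaulay.

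This reduces the question to the squarefree monomial ideal $W=\ini_{<_1}(J)$ satisfying $W^{(\ell)}=W^{\ell}$ for all $\ell$. For such a $W$, the Rees algebra $\mathcal{R}(W)$ is a normal Cohen--Macaulay domain by the same reasoning used in the proof of Proposition \ref{prop:crucialmethod}(3) (via \cite[Corollary 7.3.13]{BCRV}). Passing to the extended Rees algebra $S[Wt,t^{-1}]$ and killing the non-zero-divisor $t^{-1}$ presents $\gr(W)$ as a Cohen--Macaulay quotient, whence $\gr(J)$ is Cohen--Macaulay as well. I expect the only delicate point to be making the flat family that specializes $\gr(J)$ to $\gr(\ini_{<_1}(J))$ rigorous; an equally viable and cleaner fallback is to invoke a classical transfer result that, for an ideal $J$ in a Cohen--Macaulay ring with $J^{(\ell)}=J^{\ell}$ for all $\ell$ (which we have by Corollary \ref{cor:OrdinaryEqualsSymbolic}), Cohen--Macaulayness of $\mathcal{R}(J)$ forces Cohen--Macaulayness of $\gr(J)$.
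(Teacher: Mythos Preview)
Your argument for the $F$-rationality/rational singularities of $\mathcal{R}(J)$ is exactly the paper's: it is a direct application of Proposition \ref{prop:crucialmethod}(3), with the hypotheses supplied by Proposition \ref{p:radical} (squarefreeness of $\ini_{<_1}(J)$) and Theorem \ref{thm:main} (equality of symbolic and ordinary powers of $\ini_{<_1}(J)$).

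For the Cohen--Macaulayness of $\gr(J)$, the paper does not go through your primary route (degenerating $\gr(J)$ to $\gr(\ini_{<_1}(J))$); it simply invokes \cite[Proposition 1.1]{HunekeIllinois}, which says that if the ambient ring $S$ and the Rees algebra $\mathcal{R}(J)$ are Cohen--Macaulay, then so is $\gr(J)$. This is precisely your ``cleaner fallback,'' except that the extra hypothesis $J^{(\ell)}=J^\ell$ you mention is not needed for this transfer---only the Cohen--Macaulayness of $S$ (trivial) and of $\mathcal{R}(J)$ (just established) are used. Your primary approach can be made rigorous (the equalities $\ini_{<_1}(J^\ell)=\ini_{<_1}(J)^\ell$ do yield a flat one-parameter family taking $\gr(J)$ to $\gr(\ini_{<_1}(J))$, e.g.\ by working with the extended Rees algebra and a weight-vector deformation), but as you yourself note, it is more delicate and ultimately unnecessary given the one-line transfer result.
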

\begin{proof}
It only remains to justify that $\gr(J)$ is Cohen--Macaulay. This is simply because both the polynomial ring $S$ and the Rees algebra $\mathcal{R}(J)$ are Cohen--Macaulay \cite[Proposition 1.1]{HunekeIllinois}.
\end{proof}

\begin{cor}
Over an $F$-finite field of positive characteristic, both $\mathcal{R}(J)$ and $\gr(J)$ are $F$-split.
\end{cor}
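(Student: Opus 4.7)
The plan is to invoke item (5) of Proposition \ref{prop:crucialmethod} applied to the ideal $U = J$, since this item asserts precisely the $F$-splitness of $\mathcal{R}(U)$ and $\gr(U)$ under hypotheses that have already been verified for $J$ in earlier sections. Thus the entire proof is essentially a citation of work already done, and no new technical obstacle arises.

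Concretely, I would verify the three required ingredients in order. First, there must exist a term order $\prec$ such that $\ini_{\prec}(J)$ is a squarefree monomial ideal: this is supplied by the order $<_1$ on $S$ of Setup \ref{setup}, since Theorem \ref{thm:in(J)} gives an explicit list of the minimal generators of $\ini_{<_1}(J)$ and each of them is a squarefree monomial (alternatively, one can invoke Proposition \ref{p:radical} together with the fact, from Lemma \ref{lemma:Pinherited}, that the pair $(S,J)$ has property $\mathbf{P}$ with respect to $<_1$). Second, one needs $\ini_{\prec}(J)^{(\ell)} = \ini_{\prec}(J)^{\ell}$ for every $\ell \in \NN$: this is exactly the content of Theorem \ref{thm:main}. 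Third, one needs $J$ to be unmixed: this follows because $J$ is linked to the unmixed (in fact Cohen--Macaulay) ideal $I = I_m(X)$, and linkage preserves unmixedness (as recorded in the discussion after Definition \ref{defn:Linkage}).

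With these three ingredients in place, Proposition \ref{prop:crucialmethod}(5) directly yields that both $\mathcal{R}(J)$ and $\gr(J)$ are $F$-split over any $F$-finite field of positive characteristic, completing the proof. The main step to highlight in the write-up is simply that Theorem \ref{thm:main} feeds into the hypothesis of Proposition \ref{prop:crucialmethod}; in fact, one could present this corollary as a one-line consequence, analogous to the proof of Corollary \ref{cor:Rees}, possibly noting that the argument also recovers the symbolic $F$-split property of $J$ in the sense of \cite[Definition 5.2]{dS-Mn-NB} used inside the proof of Proposition \ref{prop:crucialmethod}(5).
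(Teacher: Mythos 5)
Your proposal is correct and matches the intended argument in the paper: the corollary is a direct application of Proposition \ref{prop:crucialmethod}(5) to $U=J$, with the hypotheses supplied by Theorem \ref{thm:in(J)} (or Proposition \ref{p:radical} with Lemma \ref{lemma:Pinherited}), Theorem \ref{thm:main}, and the fact that linkage preserves unmixedness. The paper states the corollary without proof precisely because it is this one-line consequence.
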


\begin{rmk}
Let $I$ be an unmixed ideal in a polynomial ring $R$. If the pair $(R,I)$ has the property $\mathbf{P}$, then over an $F$-finite field, $I$ and its generic link $J$ are both symbolic $F$-split ideals by \cite[Definition 5.2, Lemma 6.2]{dS-Mn-NB} in view of Lemma \ref{lemma:Pinherited}. In particular, the symbolic Rees algebra $\mathcal{R}^s(J)$ and the symbolic associated graded algebra $\gr ^s(J)$ are $F$-split rings, though they may not be Noetherian. 

However, we note that the notion of symbolic $F$-split (or even $F$-split) does \textit{not} pass to links. Indeed, let $I$ be the ideal generated by the $2$-minors of a generic $5\times 5$ matrix $X$ in $R\colonequals K[X]$. Then, by the arguments in \cite[Example 5.4]{Pandey}, its generic link $J$ (in a polynomial extension $R[Y]$) is not even $F$-injective. It follows that the associated graded ring $\gr(J)$ is not $F$-split. This is because $R[Y]/J$ is a summand of $\gr(J)$ and the property of being $F$-split is inherited by summands. However, in what follows, we show in particular that $\mathcal{R}(J)$ (and therefore $\gr(J)$) is Cohen--Macaulay and $J^{(\ell)}=J^\ell$ for all $\ell$.     
\end{rmk}

Next, we want to strengthen the above result and show that in positive characteristic both the Rees algebra and the associated graded ring of $J$ are $F$-regular.

\begin{rmk}
Observe that the Gr\"obner deformation argument of Proposition \ref{prop:crucialmethod}---used to prove Corollary \ref{cor:Rees}---does not allow us to conclude that the Rees algebra $\mathcal{R}(J)$ is $F$-regular since $F$-regularity does \textit{not} deform due to \cite{Singh}. The technique of $F$-split filtrations developed recently by De Stefani, Monta\~no, and  N\'u\~nez-Betancourt provides a sharper tool to study the Frobenius singularities of blowup algebras (see \cite[\S 4]{dS-Mn-NB} for details). Indeed, using this technique, and our main result Corollary \ref{cor:OrdinaryEqualsSymbolic}, we obtain the following strengthening of Corollary \ref{cor:Rees}. This result may be seen as an analogue of \cite[Theorem 6.7]{dS-Mn-NB}.
\end{rmk}

\begin{thm}\label{thn:symbolicFsplit}
The Rees algebra $\calR(J)$ and the associated graded ring $\gr(J)$ of the generic link $J$ of the ideal of maximal minors are strongly $F$-regular if $K$ is an $F$-finite field of positive characteristic, and they have rational singularities if $K$ is a field of characteristic $0$.
\end{thm}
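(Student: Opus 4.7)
The plan is to push the argument behind Corollary \ref{cor:Rees} one notch further by invoking the $F$-split filtration technology of De Stefani--Monta\~no--N\'u\~nez-Betancourt. As noted in the remark preceding the theorem, the Gr\"obner deformation argument of Proposition \ref{prop:crucialmethod} cannot yield strong $F$-regularity because the latter fails to deform; the $F$-split filtration framework is a sharper tool designed precisely to bypass this obstruction.

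First I would combine Lemma \ref{lemma:Pinherited}, which gives property $\mathbf{P}$ for $(S,J)$, with Corollary \ref{cor:OrdinaryEqualsSymbolic} to conclude that the filtration $\{J^\ell\}_{\ell\in\NN}=\{J^{(\ell)}\}_{\ell\in\NN}$ is a \emph{symbolic $F$-split filtration} in the sense of \cite[Definition 5.2, Lemma 6.2]{dS-Mn-NB}. This is exactly the hypothesis of \cite[Theorem 4.7]{dS-Mn-NB}, and so produces the $F$-splittings of $\mathcal{R}(J)$ and $\gr(J)$ already recorded in the corollary above the theorem statement---but, crucially, equips these splittings with a definite combinatorial structure that we may now exploit further.

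To promote the $F$-splitness of $\mathcal{R}(J)$ to strong $F$-regularity, I would follow the blueprint of \cite[Theorem 6.7]{dS-Mn-NB}: since $\mathcal{R}(J)$ is already $F$-rational by Corollary \ref{cor:Rees} and the $F$-splitting coming from $\mathbf{P}$ is compatible with the $\NN$-grading of $\mathcal{R}(J)$, a graded-pieces argument as in loc.\ cit.\ forces the test ideal of $\mathcal{R}(J)$ to be the unit ideal. Strong $F$-regularity of $\gr(J)$ would then follow either by rerunning the same argument using the graded $F$-split filtration on $\gr(J)$, or more cleanly by combining the $F$-rationality of $\gr(J)$ (inherited via specialization along the principal grade element of $\mathcal{R}(J)$) with the Gorenstein property of $\gr(J)$ noted in Corollary \ref{cor:Rees}, since a Gorenstein $F$-rational ring is strongly $F$-regular in the $F$-finite setting.

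For the characteristic zero assertion, I would invoke the standard reduction modulo $p$ machinery: choose a finitely generated $\mathbb{Z}$-subalgebra of coefficients over which the generic-link data is defined, apply the positive-characteristic result to a dense set of fibers, and invoke the Smith--Hara correspondence to transfer strongly $F$-regular (hence $F$-rational) type to rational singularities for the characteristic-$0$ fiber. The main obstacle is the $F$-rational-to-strongly-$F$-regular upgrade itself: one must verify that the explicit $F$-splittings produced from $\mathbf{P}$ satisfy the precise compatibility with the $F$-rational structure of $\mathcal{R}(J)$ demanded by the analogue of \cite[Theorem 6.7]{dS-Mn-NB}. The combinatorial description of the lead terms of $J$ in Theorem \ref{thm:in(J)}, together with the straightening law of Lemma \ref{lem:straighten}, should supply the handle needed for this verification.
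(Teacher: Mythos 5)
Your opening step---combining Lemma \ref{lemma:Pinherited} and Corollary \ref{cor:OrdinaryEqualsSymbolic} to conclude that $\{J^\ell\}$ is a symbolic $F$-split filtration, hence that $\mathcal{R}(J)$ and $\gr(J)$ are $F$-split---is correct, but it only reproves the corollary preceding the theorem. The genuine difficulty is the upgrade from $F$-split to strongly $F$-regular, and there your argument has a gap. The assertion that ``a graded-pieces argument forces the test ideal of $\mathcal{R}(J)$ to be the unit ideal'' because $\mathcal{R}(J)$ is $F$-rational and the splitting respects the grading is not a valid implication: $F$-rational plus $F$-split (even with a graded splitting) does not give strong $F$-regularity, and you do not actually supply the mechanism by which Theorem \ref{thm:in(J)} or Lemma \ref{lem:straighten} would close it. The alternative route you offer for $\gr(J)$ is circular and also unjustified on its own terms: the Gorenstein property of $\gr(J)$ is Corollary \ref{cor:Gor}, which is proved \emph{after} this theorem and uses it (for normality); $F$-rationality of $\gr(J)$ is not established anywhere (Proposition \ref{prop:crucialmethod}(3) gives it only for $\mathcal{R}(J)$, and $F$-rationality does not pass to the quotient of $\mathcal{R}'(J)$ by $T^{-1}$); and even if both held, $\mathcal{R}(J)$ itself is not Gorenstein, so this route gives nothing for the Rees algebra.

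The paper's actual mechanism is an induction on $m$, built around a specific splitting that is compatible with localization at $x_{1,1}$. Let $f$ be the product of the entries of $Y[\Delta_1\cdots\Delta_r]^T$; its initial term $\prod_j Y_{j,j}\alpha_j$ avoids $x_{1,1}$, so multiplying by the product $\xi$ of the remaining variables (other than $x_{1,1}$) gives $g=\xi f$ with $g^{p-1}\in J^{[p]}:J$. The trace map $\varphi=\Tr\bigl(F_*(x_{1,1}^{p-2}g^{p-1})\cdot-\bigr)$ is a Frobenius splitting of $S$ satisfying $\varphi(J)\subset J$ and $\varphi(F_*(x_{1,1}))=1$; and because $g\in J^{(n-m+1)}=J^{n-m+1}$, \cite[Corollary 5.10]{dS-Mn-NB} yields $g^{p-1}\in (J^{r+1})^{[p]}:J^{rp+1}$ for all $r$, so $\varphi$ induces splittings $\Psi$ of $\mathcal{R}(J)$ and $\overline{\Psi}$ of $\gr(J)$ that both send $F_*(x_{1,1})$ to $1$. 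Localizing at $x_{1,1}$ reduces (via the argument of \cite[Theorem 6.3]{PT24}) to the Rees algebra and associated graded ring of the generic link of the $(m-1)\times(n-1)$ case, which are strongly $F$-regular by induction (with the base case $m=1$ handled directly using \cite{PT24}), and \cite[Theorem 5.9(a)]{Hochster-Huneke94a} then promotes the $F$-splitness to strong $F$-regularity. That explicit localization--induction step is the content your proposal is missing.
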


\begin{proof}
The characteristic $0$ statement follows from the positive characteristic one by \cite{Smithrat}, so we assume that $K$ is an $F$-finite field of characteristic $p>0$.

We proceed by induction on $m$. If $m=1$, the assertion essentially follows from \cite{PT24}: If $n=1$, the generic link $J$ is generated by one variable and the assertion follows trivially. If $n>1$, $J$ by \cite[Lemmas 5.3, 5.5]{PT24} and \cite[Corollary 5.10]{dS-Mn-NB} we can choose an $F$-splitting of $\calR(J)$ and $\gr(J)$ sending $x_1:=x_{1,1}$ to $1$. Then $\mathcal{R}(J)_{x_1}=\calR(JS_{x_1})$ and $\gr(J)_{x_1}=\gr(JS_{x_1})$ are strongly $F$-regular since
\[JS_{x_1} = \mathfrak{a}S_{x_1}:(x_1,\dots,x_n)S_{x_1} = \mathfrak{a}S_{x_1}: S_{x_1} = \mathfrak{a}S_{x_1}\]
is generated by linear forms of $S_{x_1}=K[x_1,x_1^{-1}][x_2,\ldots ,x_n,Y_1,\ldots ,Y_n]$; indeed we get that $\gr(JS_{x_1})$ is a polynomial ring in $n$ variables over the regular ring $S_{x_1}/JS_{x_1}$, while $\calR(JS_{x_1})$ is a localization of a polynomial extension of a determinantal ring of $2$-minors of a generic $2\times n$-matrix. Now the $F$-regularity of $\mathcal{R}(J)$ and $\gr(J)$ follows from \cite[Theorem 5.9(a)]{Hochster-Huneke94a}. 

We now assume that $m>1$. Notice that the product $f$ of the generators of $\mathfrak{a}$ has a squarefree initial term, more precisely $\ini_{<_1}(f)=\prod_{i=1}^{n-m+1}Y_{i,i}\alpha_i$ (using the notation of Setup \ref{setup}), and $x_{1,1}$ does not divide $\ini_{<_1}(f)$. In view of this, let $\xi$ be the product of all the variables of $S$ not dividing $x_{1,1}\cdot \ini_{<_1}(f)$ and set
\[ g \colonequals \xi\cdot f.\]
Note that $g^{p-1} \in J^{[p]}:J$. It is easily checked that the Frobenius trace map
\[\varphi = \Tr(F_*(x_{1,1}^{p-2}\cdot g^{p-1})-): F_*(S) \to S. \]
gives a Frobenius splitting of $S$ such that $\varphi(F_*(x_{1,1}))=1$ and $\varphi(J)\subset J$.
As the polynomial $g \in J^{n-m+1}=J^{(n-m+1)}$, by \cite[Corollary 5.10]{dS-Mn-NB}, we have
\[g^{p-1} \in (J^{r+1})^{[p]}:J^{rp+1} \quad \forall \;r\in\NN.\]
Consequently, $\varphi$ induces maps $\Psi: F_*(\mathcal{R}(J))\to \mathcal{R}(J)$ and $\overline{\Psi}: F_*(\gr(J))\to \gr(J)$ which give $F$-splittings such that
\[\Psi(F_*(x_{1,1}))=1 \quad \text{and} \quad \overline{\Psi}(F_*(\overline {x_{1,1}}))=1. \]

Using the same arguments in the proof of \cite[Theorem 6.3]{PT24}, it turns out that the localizations $\mathcal{R}(J)_{x_{1,1}}$ and $\gr(J)_{x_{1,1}}$ are respectively faithfully flat extensions of the Rees algebra and the associated graded ring of the generic link of the ideal generated by the $(m-1)$-minors of a generic $(m-1)\times(n-1)$ matrix. Since $F$-regularity is unaffected by faithfully flat maps, we get that the rings $\mathcal{R}(J)_{x_{1,1}}$ and $\gr(J)_{x_{1,1}}$ are strongly $F$-regular by the induction hypothesis. Once again, the $F$-regularity of $\mathcal{R}(J)$ and $\gr(J)$ follows from \cite[Theorem 5.9(a)]{Hochster-Huneke94a}. 
\end{proof}

\begin{cor}\label{cor:Gor}
In any characteristic, $\calR(J)$ is a Cohen--Macaulay normal domain and $\gr (J)$ is a Gorenstein normal domain.    
\end{cor}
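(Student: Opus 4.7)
The Cohen-Macaulay, normal, and domain properties of both algebras follow at once from the results already in hand: Corollary \ref{cor:Rees} gives Cohen-Macaulayness of both $\mathcal{R}(J)$ and $\gr(J)$, while Theorem \ref{thn:symbolicFsplit} gives normality via strong $F$-regularity in positive characteristic and via rational singularities in characteristic zero. As a subring of the polynomial ring $S[t]$, the Rees algebra $\mathcal{R}(J)$ is a domain; combined with normality, the connected positively graded Noetherian ring $\gr(J)$ is a domain as well. Only the Gorenstein property of $\gr(J)$ remains to be addressed.

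For the Gorenstein claim, the plan is to pass to the initial ideal via Gr\"obner degeneration. Proposition \ref{prop:crucialmethod}(2) gives $\ini_{<_1}(J^\ell)=\ini_{<_1}(J)^\ell$ for every $\ell\in\NN$, which, after organizing the graded pieces, lifts to the bigraded identities
\[
\ini(\mathcal{R}(J))=\mathcal{R}(\ini(J))\quad\text{and}\quad\ini(\gr(J))=\gr(\ini(J)).
\]
Consequently $\gr(J)$ flatly degenerates to $\gr(\ini(J))$. Since Gorensteinness is an open condition in a flat family of Cohen-Macaulay algebras, it suffices to establish that $\gr(\ini(J))$ is Gorenstein.

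Now $\ini(J)$ is a squarefree monomial ideal, and Theorem \ref{thm:main} yields $\ini(J)^{(\ell)}=\ini(J)^{\ell}$ for all $\ell$, so $\mathcal{R}(\ini(J))$ is a normal toric algebra and $\gr(\ini(J))$ is its special fiber. The Gorenstein property for $\gr(\ini(J))$ is then a combinatorial assertion about the lattice polytope $P$ whose vertices are the exponent vectors of the generators of $\ini(J)$---the ``linear in $Y$'' generators $Y_{j,j}\alpha_j$ (for $j=1,\ldots,n-m+1$) and the ``balanced'' generators $\prod_j Y_{j,j}\cdot \beta_A$ (indexed by $(m-1)$-subsets $A\subset\{2,\ldots,n\}$) described in Theorem \ref{thm:in(J)}. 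I would verify the Danilov-Stanley criterion, namely that $P$ is reflexive with respect to a suitable interior lattice point, using the symmetry apparent in the indexing of the $\beta_A$ (the natural involution $A\leftrightarrow\{n+2-a:a\in A\}$) together with the staircase structure of the $\alpha_j$ visible in Figures \ref{Fig1}--\ref{Fig2}.

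The main obstacle is this last combinatorial verification: while the pictures strongly hint at the required symmetry, pinning down the canonical module of the semigroup ring $\gr(\ini(J))$ and verifying its cyclicity requires a careful polyhedral analysis of the interaction between the anti-diagonal generators $\alpha_j$ and the complementary generators $\beta_A$. An alternative, purely ring-theoretic route that may sidestep the polytopal bookkeeping is to use linkage duality: for the geometric link $IS\sim_{\mathfrak{a}}J$ one has $\omega_{S/J}\cong IS/\mathfrak{a}$, and a graded refinement of this identity should express $\omega_{\gr(J)}$ as a graded twist of a module built from $\gr(IS)$, reducing Gorensteinness of $\gr(J)$ to cyclicity of that module.
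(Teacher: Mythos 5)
Your handling of the Cohen--Macaulay, normal, and domain parts is fine and matches the paper (both are read off from Theorem \ref{thn:symbolicFsplit}, plus the observation that $\gr(J)\cong\mathcal R'(J)/T^{-1}\mathcal R'(J)$ is a domain when $J^{(\ell)}=J^\ell$). The problem is the Gorenstein claim, where your proposal has a genuine gap: you reduce, via the degeneration $\ini(\gr(J))=\gr(\ini(J))$, to proving that $\gr(\ini(J))$ is Gorenstein, but you never actually establish it. You name the Danilov--Stanley/reflexivity criterion and gesture at a symmetry on the $\beta_A$, then explicitly concede that ``pinning down the canonical module \ldots requires a careful polyhedral analysis'' which you do not carry out. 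In a degeneration argument the burden of proof sits entirely on the special fiber; an unverified combinatorial assertion about the polytope of exponent vectors is precisely the hard content, not a routine check. Your alternative sketch via linkage duality for $\omega_{\gr(J)}$ is likewise only a direction, not an argument.

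The paper sidesteps all polytopal bookkeeping. It works with the extended Rees algebra $\mathcal R'(J)=S[JT,T^{-1}]$: this ring is Cohen--Macaulay (from the Cohen--Macaulayness of $\mathcal R(J)$), $T^{-1}$ is a prime nonzerodivisor, and the localization $\mathcal R'(J)_{T^{-1}}\cong S[T,T^{-1}]$ is a UFD. Nagata's theorem then gives that $\mathcal R'(J)$ is a UFD, Murthy's theorem (a Cohen--Macaulay UFD is Gorenstein) gives that $\mathcal R'(J)$ is Gorenstein, and passing to the quotient by the regular element $T^{-1}$ gives that $\gr(J)$ is Gorenstein. This route needs no information about $\gr(\ini(J))$ beyond what is already secured by $J^{(\ell)}=J^\ell$ and Cohen--Macaulayness, and it avoids the reflexive-polytope analysis you flagged as the obstacle. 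If you want to salvage your approach, you would either have to complete that polyhedral computation or, better, replace it with the Nagata/Murthy argument, which is shorter and characteristic-free.
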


\begin{proof}
That $\calR(J)$ and $\gr(J)$ are both normal Cohen--Macaulay domains is a consequence of Theorem \ref{thn:symbolicFsplit}.

Since $\gr(J)$ is isomorphic to the quotient of the extended Rees algebra $\calR '(J)=S[JT,T^{-1}]$ by the ideal generated by the regular element $T^{-1}$, $\calR '(J)$ is Cohen--Macaulay and $T^{-1}$ generates a prime ideal of $\calR '(J)$. On the other hand,
 \[\calR '_{T^{-1}} \cong S[T,T^{-1}] \]
 is a unique factorization domain (UFD); but then by \cite{Nagata}, $\calR '(J)$ is a UFD. Since a Cohen--Macaulay UFD is Gorenstein by \cite{Murthy}, $\calR '$ is a Gorenstein domain. Therefore
 \[\calR '/T^{-1}\calR ' \cong \gr(J)\]
 is also Gorenstein, and this concludes the proof.
\end{proof}

We do not know if the symbolic and ordinary powers of the generic link of the non-maximal minors of a generic matrix are equal; the  computations with a machine become inaccessible very quickly. However, the case of a square matrix---where the generic link is an almost complete intersection---is subsumed in the following more general result pointed out to us by Ulrich:
\begin{prop}\label{prop:general}
Let $I$ be an unmixed homogeneous ideal of height $g>0$ in a polynomial ring $R$ over a field satisfying the following conditions:
\begin{itemize}
    \item $R/I$ is a Gorenstein ring.
    \item $I$ is a complete intersection in codimension $g+1$, i.e.,
    $I_{\frakp}$ is generated by an $R_{\frakp}$-regular sequence for all primes $\frakp$ such that $\dim((R/I)_{\frakp})\leq 1$.
\end{itemize} 
Let $J$ be the generic link of $I$ (in a polynomial extension $S$ of $R$). Then we have
\begin{enumerate}[\quad\rm(1)]
    \item $\mathcal{R}(J)$ is a Cohen--Macaulay normal domain.
    \item $J^{(\ell)}=J^\ell$ for all $\ell \in \NN$.
    \item $\gr(J)$ is a Gorenstein domain.
\end{enumerate}
\end{prop}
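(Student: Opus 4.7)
The strategy is to deduce that $J$ is an almost complete intersection (ACI) satisfying the strongly Cohen--Macaulay (SCM) and $G_\infty$ conditions, from which (1) and (2) follow via classical theorems on Rees algebras of SCM ideals, and then to obtain (3) by the Nagata--Murthy UFD argument of Corollary~\ref{cor:Gor}.

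First, I would observe that $J$ is an ACI of height $g$: since $R/I$ is Gorenstein, the canonical module $J/\fraka$ of $S/IS$ is cyclic, so $J = \fraka + (z)$ for some $z \in J$. Next, $J$ is strongly Cohen--Macaulay: $R/I$ being Gorenstein forces $I$ to be SCM, and combined with the appropriate $G_s$-condition on $I$ (supplied by the complete-intersection-in-codimension-$(g+1)$ hypothesis), Huneke's theorem on the behavior of SCM under generic linkage \cite{Huneke-Ulrich87} applies. Finally, $J$ satisfies $G_\infty$ because the generic link of an unmixed Cohen--Macaulay ideal is geometric: at each minimal prime $\frakp$ of $J$ we have $\frakp \not\supset IS$, so $J_\frakp = \fraka_\frakp$ is a complete intersection; at primes of height $\geq g+1$ the bound $\mu(J_\frakp) \leq g+1 \leq \height(\frakp)$ is automatic.

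Parts (1) and (2) then follow from the theory of SCM ideals satisfying $G_\infty$: such $J$ is of linear type, $\calR(J)$ and $\gr(J)$ are Cohen--Macaulay, and every ordinary power $J^\ell$ is unmixed (see e.g.\ \cite{HunekeIllinois, Huneke-Simis-Vasconcelos}). Unmixedness of $J^\ell$ at the correct height gives $J^{(\ell)}=J^\ell$, proving (2). The normality of $\calR(J)$ then follows from Serre's criterion: (S$_2$) is inherited from CM, and (R$_1$) is ensured by $S$ being regular together with $J$ being generically a complete intersection.

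For (3), I would reproduce the argument of Corollary~\ref{cor:Gor} verbatim: the extended Rees algebra $\calR'(J) = S[JT,T^{-1}]$ is a CM domain and $\calR'(J)_{T^{-1}} \cong S[T,T^{-1}]$ is a UFD, so by Nagata's theorem $\calR'(J)$ is a UFD provided $T^{-1}$ is a prime element, i.e.\ provided $\gr(J) = \calR'(J)/(T^{-1})$ is a domain; Murthy's theorem then gives $\calR'(J)$ Gorenstein, whence $\gr(J)$ is Gorenstein. The main obstacle will be verifying that $\gr(J)$ is a domain: locally at a minimal prime $\frakp$ of $J$ this is clear since $\frakp \not\supset IS$ yields $J_\frakp = \fraka_\frakp$ (generated by a regular sequence with $S_\frakp/J_\frakp$ a field), whence $\gr(J)_\frakp$ is a polynomial ring over a field; globally one must leverage the linear-type structure of $J$ to identify $\gr(J) = \Sym_{S/J}(J/J^2)$ and argue that $J/J^2$ is torsion-free over $S/J$ with the expected generic rank.
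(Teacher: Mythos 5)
Your overall strategy---deduce that $J$ is an almost complete intersection, establish SCM, apply Rees-algebra machinery, then run the Nagata--Murthy argument---is the same skeleton as the paper's proof, but there are two substantive gaps.

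The central gap is in part (2). You assert that $J$ being SCM and satisfying $G_\infty$ implies every power $J^\ell$ is unmixed, attributing this to general theory. That is not true. Consider $J=(xy,xz,yz)\subset K[x,y,z]$: this is a height $2$ almost complete intersection, strongly Cohen--Macaulay, and $G_\infty$ holds ($\mu(J_\frakp)=2$ at the minimal primes, $\mu(J_\frakm)=3=\height\frakm$), yet $xyz\in J^{(2)}\setminus J^2$. What SCM $+\,G_\infty$ buys you is linear type and the Cohen--Macaulayness of $\mathcal{R}(J)$ and $\gr(J)$, but \emph{not} that the powers have no embedded primes. For that you must have the strictly stronger local bound $\mu(J_\frakp)\leq\max(g,\height\frakp-1)$, which for an almost complete intersection is precisely the statement that $J$ is a complete intersection in codimension $g+1$. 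This property does hold here, because it passes from $I$ to the generic link by \cite[Proposition~2.9~b)]{Huneke-Ulrich85}---but you never establish it for $J$; your argument only establishes the weaker $G_\infty$. The paper uses the strong bound to get the strict inequality $\ell(JS_\frakp)\leq\mu(JS_\frakp)<\height\frakp$ for $\height\frakp>g$, then combines the identity $\min_\ell\depth((S/J^\ell)_\frakp)=\operatorname{grade}(\frakp\gr(JS_\frakp))$ with the Cohen--Macaulayness of $\gr(J)$ to force positive depth. Your proof of (2) is incomplete without this.

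A secondary issue: you claim that $R/I$ Gorenstein forces $I$ to be strongly Cohen--Macaulay. That is false already in height $4$ (there are Gorenstein ideals of height $\geq 4$ that are not SCM). The paper does not need $I$ to be SCM at all: since $J$ is an almost complete intersection and $S/J$ is Cohen--Macaulay (linkage preserves CM), the only possibly nonzero higher Koszul homology is $H_1$, and it is Cohen--Macaulay by \cite[Lemma~1.13]{HunekeSCM}; so $J$ itself is SCM directly. Your route through Huneke--Ulrich propagation of SCM along generic links is unnecessary and rests on a false premise. Finally, in (3), your suggestion to verify that $\gr(J)\cong\Sym_{S/J}(J/J^2)$ is a domain via torsion-freeness of $J/J^2$ is not enough on its own (symmetric algebras of torsion-free modules over domains need not be domains); the paper instead invokes Hochster's criterion, which says that once $J^{(\ell)}=J^\ell$ for all $\ell$ and $J$ is prime (\cite[Proposition~2.6]{Huneke-Ulrich85}), the element $T^{-1}$ generates a prime in the extended Rees algebra---this is the clean way to finish.
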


\begin{proof}
We have $IS \sim _{\mathfrak{a}} J$ (where $\fraka$ is the generic regular sequence in $IS$). By \cite{PS74}, the canonical module of $S/IS$ is
\[\omega_{S/IS} \cong J/\fraka. \]
Since $S$ is a faithfully flat extension of $R$, so  $S/IS$ is a Gorenstein ring. It follows that the linked ideal $J$ is an \textit{almost complete intersection}, i.e., $\mu(J)= \height(J)+1$. Since $I$ is a complete intersection in codimension $g+1$, by \cite[Proposition 2.9 b)]{Huneke-Ulrich85}, the generic link $J$ also has this property. Therefore
\[\mu( JS_{\mathfrak{p}})\leq \height \mathfrak{p}\]
for all prime ideals $\mathfrak{p}$ containing $J$.
In addition, since linkage preserves the Cohen--Macaulay property, $S/J$ is Cohen--Macaulay; so by \cite[Lemma 1.13]{HunekeSCM} each Koszul homology module $H_i(J;S)$ of $J$ is Cohen--Macaulay. Therefore the Rees algebra $\mathcal{R}(J)$ is also Cohen--Macaulay by \cite[Proposition 1.15]{HunekeSCM}.

To show the equality $J^{(\ell)}=J^\ell$, we need to prove that the ring $S/J^\ell$ has no embedded primes, i.e., $(S/J^\ell)_{\mathfrak{p}}$ has positive depth for all prime ideals $\mathfrak{p}$ containing $J$ such that $\height(\frakp)>g$ and for any $\ell\in \NN$. Fix such a prime $\mathfrak{p}$. Since $J$ is an almost complete intersection and a complete intersection in codimension $g+1$, we have
\[\mu( JS_{\mathfrak{p}})< \height \mathfrak{p}.\]
In particular, the analytic spread of $JS_{\mathfrak{p}}$ is smaller than $\dim S_{\mathfrak{p}}$ (see \cite{NR54} for definition and properties of the analytic spread). On the other hand 
\[\min\{\depth((S/J^\ell)_{\mathfrak{p}}:\ell \in\NN\}=\mathrm{grade} (\mathfrak{p}\gr (JS_{\mathfrak{p}}))\]
(e.g. see \cite[Proposition 9.23]{BrunsVetter}). Since $\gr(JS_{\mathfrak{p}})=\gr(J)_{\mathfrak{p}}$ is Cohen--Macaulay by the previous part, the analytic spread of $JS_{\mathfrak{p}}$, that by definition is the Krull dimension of $\gr(JS_{\mathfrak{p}})/\mathfrak{p}\gr(JS_{\mathfrak{p}})$, is 
\[\dim \gr(JS_{\mathfrak{p}})-\height(\mathfrak{p}\gr(JS_{\mathfrak{p}}))=\dim S_{\mathfrak{p}}-\mathrm{grade}(\mathfrak{p}\gr(JS_{\mathfrak{p}})).\]
Putting things together, we get that \[\depth((S/J^\ell)_{\mathfrak{p}})>0 \quad \text{for all}\quad \ell\in\NN.\] Therefore $J^{(\ell)}=J^\ell$ for all $\ell \in \NN$. Now the normality of $\mathcal{R}(J)$ follows from the fact that the generic link $J$ is a prime ideal (since $I$ is unmixed) by \cite[Proposition 2.6]{Huneke-Ulrich85} and the symbolic powers of a prime ideal in a regular ring are integrally closed.

We now show that $\gr(J)$ is a Gorenstein domain. First  notice that $\gr(J)$ is Cohen--Macaulay since $\calR(J)$ s Cohen--Macaulay \cite[Proposition 1.1]{HunekeIllinois}. Since $J^{(\ell)}=J^\ell$ for all $\ell\in\NN$, the element $T^{-1}$ of the extended Rees algebra $\calR '(J)$ generates a prime ideal by  \cite{Hochster-symbolic}, so $\gr(J)\cong \calR '(J)/T^{-1}\calR '(J)$ is a domain. Now the rest of the proof is exactly the same as that of Corollary \ref{cor:Gor}.
\end{proof}

The symbolic and ordinary powers of the non-maximal minors---of size atleast $2$---of a generic matrix are \textit{never} equal \cite[Theorem 10.4]{BrunsVetter}. However, from the above discussion, we get: 

\begin{cor}\label{cor:non-maximal}
Let $X$ be an $n \times n$ matrix of indeterminates, $K$ a field, and $R= K[X]$. Let $I$ be the ideal of $R$ generated by the $t$-minors of $X$ with $1\leq t \leq n$ and $J$ be the generic link of $I$. Then
\begin{enumerate}[\quad\rm(1)]
    \item The Rees algebra $\mathcal{R}(J)$ is a Cohen--Macaulay normal domain.
    \item $J^{(\ell)}=J^\ell$ for all $\ell \in \NN$.
    \item The associated graded ring $\gr(J)$ is a Gorenstein domain.
\end{enumerate}    
\end{cor}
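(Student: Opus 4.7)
The plan is to deduce the corollary directly from Proposition \ref{prop:general} by verifying its two structural hypotheses for $I = I_t(X)$ when $X$ is a generic square $n\times n$ matrix.

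First I would dispose of the trivial cases. When $t=1$, the ideal $I$ is the graded maximal ideal of $R$; when $t=n$, it is the principal ideal generated by $\det(X)$. In both situations $I$ is globally a complete intersection, so $R/I$ is itself a complete intersection (hence Gorenstein) and the condition ``complete intersection in codimension $g+1$'' is vacuous, so Proposition \ref{prop:general} applies trivially.

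For the main range $2\leq t\leq n-1$, I would verify the two hypotheses as follows. The Cohen--Macaulayness of $R/I_t(X)$ is the classical Hochster--Eagon theorem, and for a \textit{square} matrix it is a classical strengthening (due to Svanes; see also Bruns--Vetter) that $R/I_t(X)$ is in fact Gorenstein; this delivers the first hypothesis of Proposition \ref{prop:general} and also forces $I$ to be unmixed. To verify that $I$ is a complete intersection in codimension $g+1$, with $g=\height(I)=(n-t+1)^2$, I would invoke the standard description of the singular locus of $R/I_t(X)$ as $V(I_{t-1}(X))$ (valid for $t\geq 2$). Since $\height(I_{t-1}(X))=(n-t+2)^2$, the codimension of the singular locus inside $\mathrm{Spec}(R/I)$ is
\[
(n-t+2)^2-(n-t+1)^2=2(n-t)+3\geq 3.
\]
In particular $R/I$ is regular in codimension $1$, so for every prime $\mathfrak{p}\supseteq I$ with $\height(\mathfrak{p})\leq g+1$ the localization $(R/I)_{\mathfrak{p}}$ is a regular quotient of the regular local ring $R_{\mathfrak{p}}$, which forces $I_{\mathfrak{p}}$ to be generated by an $R_{\mathfrak{p}}$-regular sequence of length $g$.

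With both hypotheses of Proposition \ref{prop:general} in place, statements (1)--(3) of Corollary \ref{cor:non-maximal} follow immediately. The only substantive input beyond the codimension count in the singular locus is the Gorenstein property of $R/I_t(X)$ in the square case; this is the step where squareness of $X$ is used essentially (Gorensteinness fails for proper rectangular $X$), and it is precisely what makes the generic link $J$ an almost complete intersection in the sense that drives the proof of Proposition \ref{prop:general}. The main potential obstacle is therefore not a new calculation but rather locating the correct classical reference for the Gorenstein property; once cited, everything else is a routine application of the preceding proposition.
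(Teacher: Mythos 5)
Your proposal is correct and takes essentially the same route as the paper: both reduce to Proposition~\ref{prop:general} by invoking the Gorenstein property of the generic determinantal ring of a square matrix and then checking the ``complete intersection in codimension $g+1$'' condition. The paper gets that condition in one line from normality (Serre's $R_1$), whereas you re-derive it from the explicit singular locus $V(I_{t-1}(X))$ and the codimension count; these amount to the same thing, and your separate treatment of $t=1,n$ is harmless but unnecessary since the unified argument already covers those cases.
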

\begin{proof}
It is well-known that $R/I$ is a Gorenstein normal domain (e.g. see \cite[Theorem 2.11, Corollary 2.21]{BrunsVetter}). Normality forces $I$ is to be a complete intersection in codimension $\height(I)+1$. Hence this is a particular case of Proposition \ref{prop:general}.
\end{proof}

\section*{Acknowledgments}

We thank Bernd Ulrich for several insightful discussions. We are especially grateful to him for pointing out Proposition \ref{prop:general}. 

A part of this work was done while the second author was in residence at the Simons Laufer Mathematical Sciences Institute (formerly MSRI) in Berkeley, California during the Spring 2024 semester. He thanks SLMath for its warm hospitality and excellent working conditions.

\bibliographystyle{alpha}
\bibliography{main}

\end{document}